\newcommand{\raju}[1]{\color{magenta}{ (Raju: #1) }\color{black}}
\title[ ]{Periodicity of Hitchin's uniformizing Higgs bundles}
\author[Raju Krishnamoorthy]{Raju Krishnamoorthy}
\author[Mao Sheng]{Mao Sheng}
\email{krishnamoorthy@uni-wuppertal.de}
\address{Department of Mathematics, Bergische Universit\"at Wuppertal, Fakult\"at Mathematik und Naturwissenschaften, Gau{\ss}  stra{\ss}e 20}
\email{msheng@ustc.edu.cn}
\address{School of Mathematical Sciences, University of Science and Technology of China, Hefei, 230026, China}
\begin{document}

	
	\theoremstyle{plain}
	
	\newtheorem{thm}{Theorem}[section]
	
	\newtheorem{theorem}[thm]{Theorem}
	\newtheorem*{theorem*}{Theorem}
	
	\newtheorem{lemma}[thm]{Lemma}
	
	\newtheorem{semisimple lemma}[thm]{Semisimplicity Lemma}
	
	\newtheorem{periodic lemma}[thm]{Periodicity Lemma}
	
	\newtheorem{corollary}[thm]{Corollary}
	
	\newtheorem{proposition}[thm]{Proposition}
	
	\newtheorem{observation}[thm]{Observation}
	
	\newtheorem{addendum}[thm]{Addendum}
	
	\newtheorem{variant}[thm]{Variant}
	
	
	\theoremstyle{definition}
	\newtheorem{setup}[thm]{Setup}
	
	\newtheorem{construction}[thm]{Construction}
	
	\newtheorem{perspective}[thm]{Perspective}
	
	\newtheorem{notations}[thm]{Notation}
	
	\newtheorem{notation}[thm]{Notation}
	
	\newtheorem{question}[thm]{Question}
	
	\newtheorem{problem}[thm]{Problem}
	
	\newtheorem{remark}[thm]{Remark}
	
	\newtheorem{remarks}[thm]{Remarks}
	
	\newtheorem{definition}[thm]{Definition}

	\newtheorem{claim}[thm]{Claim}
	
	\newtheorem{assumption}[thm]{Assumption}
	
	\newtheorem{assumptions}[thm]{Assumptions}
	
	\newtheorem{properties}[thm]{Properties}
	
	\newtheorem{example}[thm]{Example}
	
	\newtheorem{conjecture}[thm]{Conjecture}
	
	\numberwithin{equation}{thm}

	
	\newcommand{\pP}{{\mathfrak p}}
	
	\newcommand{\pH}{{\mathfrak h}}
	
	\newcommand{\pX}{{\mathfrak X}}
	
	\newcommand{\pY}{{\mathfrak Y}}
	
	\newcommand{\sA}{{\mathcal A}}
	
	\newcommand{\sB}{{\mathcal B}}
	
	\newcommand{\sC}{{\mathcal C}}
	
	\newcommand{\sD}{{\mathcal D}}
	
	\newcommand{\sE}{{\mathcal E}}
	
	\newcommand{\sF}{{\mathcal F}}
	
	\newcommand{\sG}{{\mathcal G}}
	
	\newcommand{\sH}{{\mathcal H}}
	
	\newcommand{\sI}{{\mathcal I}}
	
	\newcommand{\sJ}{{\mathcal J}}
	
	\newcommand{\sK}{{\mathcal K}}
	
	\newcommand{\sL}{{\mathcal L}}
	
	\newcommand{\sM}{{\mathcal M}}
	
	\newcommand{\sN}{{\mathcal N}}
	
	\newcommand{\sO}{{\mathcal O}}
	
	\newcommand{\sP}{{\mathcal P}}
	
	\newcommand{\sQ}{{\mathcal Q}}
	
	\newcommand{\sR}{{\mathcal R}}
	
	\newcommand{\sS}{{\mathcal S}}
	
	\newcommand{\sT}{{\mathcal T}}
	
	\newcommand{\sU}{{\mathcal U}}
	
	\newcommand{\sV}{{\mathcal V}}
	
	\newcommand{\sW}{{\mathcal W}}
	
	\newcommand{\sX}{{\mathcal X}}
	
	\newcommand{\sY}{{\mathcal Y}}
	
	\newcommand{\sZ}{{\mathcal Z}}

	\newcommand{\scrA}{{\mathscr A}}
	
	\newcommand{\scrD}{{\mathscr D}}
	
	\newcommand{\scrS}{{\mathscr S}}
	
	\newcommand{\scrU}{{\mathscr U}}
	
	\newcommand{\scrL}{{\mathscr L}}
	
	\newcommand{\scrM}{{\mathscr M}}
	
	\newcommand{\scrX}{{\mathscr X}}
	
	\newcommand{\scrE}{{\mathscr E}}
	
	\newcommand{\scrY}{{\mathscr Y}}
	
	\newcommand{\scrV}{{\mathscr V}}
	
	\newcommand{\scrF}{{\mathscr F}}
	
	
	\newcommand{\A}{{\mathbb A}}
	
	\newcommand{\B}{{\mathbb B}}
	
	\newcommand{\C}{{\mathbb C}}
	
	\newcommand{\D}{{\mathbb D}}
	
	\newcommand{\E}{{\mathbb E}}
	
	\newcommand{\F}{{\mathbb F}}
	
	\newcommand{\G}{{\mathbb G}}
	
	\newcommand{\HH}{{\mathbb H}}
	
	\newcommand{\I}{{\mathbb I}}
	
	\newcommand{\J}{{\mathbb J}}
	
	\renewcommand{\L}{{\mathbb L}}
	
	\newcommand{\K}{{\mathbb K}}
	
	\newcommand{\M}{{\mathbb M}}
	
	\newcommand{\N}{{\mathbb N}}
	
	\renewcommand{\P}{{\mathbb P}}
	
	\newcommand{\Q}{{\mathbb Q}}
	
	\newcommand{\R}{{\mathbb R}}
	
	\newcommand{\SSS}{{\mathbb S}}
	
	\newcommand{\T}{{\mathbb T}}
	
	\newcommand{\U}{{\mathbb U}}
	
	\newcommand{\V}{{\mathbb V}}
	
	\newcommand{\W}{{\mathbb W}}
	
	\newcommand{\X}{{\mathbb X}}
	
	\newcommand{\Y}{{\mathbb Y}}
	
	\newcommand{\Z}{{\mathbb Z}}
	
	\newcommand{\id}{{\rm id}}
	
	\newcommand{\rank}{{\rm rank}}
	
	\newcommand{\END}{{\mathbb E}{\rm nd}}
	
	\newcommand{\End}{{\rm End}}
	
	\newcommand{\Hom}{{\rm Hom}}
	
	\newcommand{\Hg}{{\rm Hg}}
	
	\newcommand{\tr}{{\rm tr}}
	
	\newcommand{\SL}{{\rm SL}}
	
	\newcommand{\PSL}{{\rm PSL}}
	
	\newcommand{\GL}{{\rm GL}}
	
	\newcommand{\Gr}{{\rm Gr}}
	
	\newcommand{\Cor}{{\rm Cor}}
	
	\newcommand{\HIG}{\mathrm{HIG}}
	
	\newcommand{\PHG}{\mathrm{PHG}}
	
	\newcommand{\MHG}{\mathrm{MHG}}
	
	\newcommand{\MIC}{\mathrm{MIC}}
	
	\newcommand{\DR}{\mathrm{DR}}
	
	\newcommand{\MDR}{\mathrm{MDR}}
	
	\newcommand{\PDR}{\mathrm{PDR}}
	
	\newcommand{\HDF}{\mathrm{HDF}}
	
	\newcommand{\DHF}{\mathrm{DHF}}

	\newcommand{\PaHG}{\mathrm{HG}^{\text{par}}}
	
	\newcommand{\PaDR}{\mathrm{DR}^{\text{par}}}
	
	\newcommand{\Spec}{\mathrm{Spec\ }}

	
	\thanks{This research work is supported by the National Key R and D Program of China 2020YFA0713100, CAS Project for Young Scientists in Basic Research Grant No. YSBR-032, National Natural Science Foundation of China (Grant No. 11721101), the Fundamental Research Funds for the Central Universities (No. WK3470000018) and Innovation Program for Quantum Science and Technology (2021ZD0302904).}

	\begin{abstract}
		We link the periodicity of Hitchin's uniformizing Higgs bundle with the arithmetic geometry of its underlying curve.  Some new relations are discovered. We also speculate on the whole class of periodic Higgs bundles. 
	\end{abstract}
	\maketitle
	\tableofcontents
	
	\section{Introduction}
	The uniformization theorem of compact Riemann surfaces of genus $g\geq 2$ is one of the most fundamental results in complex geometry. N. Hitchin \cite[Example 1.5]{H} proposed an alternative approach to this uniformization theory, via the so-called uniformizing Higgs bundle. Because of its intimate connection with geometry, the uniformizing Higgs bundle a la Hitchin is one of the most studied objects in the theory of Higgs bundles. In \cite{LS}, we studied the periodicity of the uniformizing Higgs bundle over $\P^1$ minus four points and its possible relation with the $\Z$-lattice structure of the associated uniformizing representation of $\pi_1$. In this article, we explore further into this connection for smooth \emph{projective} curves. 
	
	Let $C$ be a connected complex smooth projective curve of genus $\geq 1$. Choose and then fix a square root $K_C^{1/2}$ of the canonical bundle of $C$. Hitchin's uniformizing Higgs bundle $(E_{unif},\theta_{unif})$ attached to $C$ is given as follows:  $E_{unif}= K_{C}^{1/2}\oplus K_{C}^{-1/2}$ and $\theta_{unif}$ is given by the matrix:
	$$
	\theta_{unif}:= \left(\begin{matrix}0 & Id_{K_{C}^{1/2}}\\
		0 & 0
	\end{matrix}\right)\colon E_{unif}\rightarrow E_{unif}\otimes K_{C}= K_{C}^{3/2}\oplus K_{C}^{1/2}.
	$$
	The uniformizing Higgs bundle is uniquely defined up to tensoring with a two-torsion line bundle.  A \emph{spreading-out} of $(C,E_{unif},\theta_{unif})$ is a triple $(\sC, \sE_{unif},\Theta_{unif})$ defined over $S$, where $S=\Spec(A)$ with $A\subset \C$ a finitely generated $\Z$-subalgebra, $\sC$ a smooth projective curve over $S$, $(\sE_{unif},\Theta_{unif})$ an $S$-relative Higgs bundle over $\sC$, together with an isomorphism over $\C$:
	$$\alpha:(\sC, \sE_{unif},\Theta_{unif})\times_A\C\cong (C,E_{unif},\theta_{unif}).$$
	\begin{definition}
		The Higgs bundle $(E_{unif},\theta_{unif})$ over $C$ is said to be \emph{periodic} if there exists
		\begin{itemize}
		\item a spreading-out 
		$
		(\sC, \sE_{unif},\Theta_{unif})\to S
		$
		of $(C,E_{unif},\theta_{unif})$
		\item a proper closed subscheme $Z\subset S$;
		\item and a positive integer $f$
		\end{itemize}
		 such that the reduction $(\sE_{unif,s},\Theta_{unif,s})$ at \emph{every} geometric point of $s\in S-Z$ is periodic of period $\leq f$ with respect to \emph{any} $W_2(k(s))$-lifting $\tilde s\to S$ of $s$. It is said to be \emph{one-periodic}, if $f$ can be taken to be one. 
	\end{definition}
	It follows from \cite[Lemma 4.8]{KS} that if $(E_{unif},\theta_{unif})$ is periodic with respect to one spreading-out of $C$, it is periodic with respect to any spreading-out. Also, $(E_{unif},\theta_{unif})$ is (one-)periodic if and only if $(E_{unif},\theta_{unif})\otimes (L,0)$ is (one-)periodic for any (two-)torsion line bundle $L$. Therefore, Hitchin's uniformizing Higgs bundle being (one-)periodic/non (one-)periodic is an \emph{intrinsic} property of its underlying curve. Our findings may be summarized as follows.
	\begin{definition}\label{infty supersingular reduction}
		A complex elliptic curve $C$ has \emph{infinitely many primes of supersingular reduction} if for a spreading-out $\mathcal C\rightarrow S$ of $C$, there exist geometric points $s\in S$ of arbitrarily large residue characteristic such that the reduction $\mathcal C_s$ is supersingular.
	\end{definition}
	Clearly, whether a complex elliptic curve has infinitely many primes of supersingular reduction or not is independent of the choice of spreading-out.
	\begin{proposition}[Proposition \ref{nonperiodicity for unif}]\label{nonperiodicity of elliptic curve}
		If a complex elliptic curve $C$ has infinitely many primes of supersingular reduction, then $(E_{unif},\theta_{unif})$ over $C$ is non-periodic. 
	\end{proposition}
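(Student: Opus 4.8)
The plan is to argue by contradiction. On an elliptic curve $K_C\cong\mathcal{O}_C$, so we may take $K_C^{1/2}=\mathcal{O}_C$; a spreading-out then has $\mathcal{E}_{unif}=\mathcal{O}_{\mathcal{C}}\oplus\mathcal{O}_{\mathcal{C}}$ and $\Theta_{unif}$ the nilpotent Higgs field whose unique nonzero entry is the canonical trivialization of $\Omega^1_{\mathcal{C}/S}$. For the obvious filtration $\mathcal{O}\subset\mathcal{O}^{\oplus2}$ this is ``maximal'' (the induced map on the associated graded is an isomorphism of line bundles on every fibre), and $(\mathcal{O}^{\oplus2},\Theta_{unif})$ is a \emph{nonsplit} self-extension of the unit Higgs bundle $(\mathcal{O},0)$ in the category of Higgs bundles. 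Suppose $(E_{unif},\theta_{unif})$ is periodic; by \cite[Lemma 4.8]{KS} we may work with any convenient spreading-out $\mathcal{C}\to S$, and fix a proper closed $Z\subsetneq S$ and $f\geq1$ witnessing this.

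The first step is to produce a supersingular geometric point of $S-Z$ of large residue characteristic. If $C$ is defined over $\overline{\Q}$ this is immediate, since then $\dim S=1$, so $Z$ is finite, while infinitely many closed points of $S$ have supersingular fibre. In general, over each prime $p$ the supersingular locus of $\mathcal{C}\to S$ is the zero scheme of the Hasse invariant, a section of $(\pi_*\Omega^1_{\mathcal{C}/S})^{\otimes(p-1)}$ whose vanishing divisor has degree growing with $p$ along curves in the fibres, whereas $Z$ --- a fixed proper closed subscheme of finite type over $\Z$ --- meets the characteristic-$p$ fibres in subschemes of bounded degree (and is supported over finitely many primes if it is vertical). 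As there are supersingular fibres of arbitrarily large residue characteristic, for $p\gg0$ the supersingular locus is not contained in $Z$; we pick such a geometric point $s\in S-Z$, taking $p=\operatorname{char}k(s)$ large enough for all purposes below, write $X:=\mathcal{C}_s$ --- a supersingular elliptic curve over $k:=\overline{k(s)}$ --- and fix a $W_2(k(s))$-lift $\tilde s\to S$.

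The crux is that the Higgs--de Rham flow of $(\mathcal{O}_X\oplus\mathcal{O}_X,\Theta_{unif,s})$ with respect to $\tilde s$ is not periodic. If it were $f$-periodic for some flow structure, then by the correspondence between periodic Higgs--de Rham flows and representations of the \'etale fundamental group (Lan--Sheng--Zuo; cf. \cite{LS, KS}) it would yield a representation $\rho\colon\pi_1^{\mathrm{et}}(X)\to\GL_2(\F_{p^{f}})$; since that correspondence is exact and $(\mathcal{O}_X\oplus\mathcal{O}_X,\Theta_{unif,s})$ is a nonsplit self-extension of $(\mathcal{O}_X,0)$, the representation $\rho$ would be a nonsplit self-extension of the trivial representation, i.e. of the form $\gamma\mapsto\left(\begin{smallmatrix}1 & c(\gamma)\\ 0 & 1\end{smallmatrix}\right)$ for a nonzero continuous homomorphism $c\colon\pi_1^{\mathrm{et}}(X)\to\F_{p^{f}}$, equivalently a nonzero class in $H^1_{\mathrm{et}}(X,\F_{p^{f}})$. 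But $X$ is supersingular, so $X[p](k)=0$, hence $\pi_1^{\mathrm{et}}(X)$ has trivial pro-$p$ completion and $H^1_{\mathrm{et}}(X,\F_{p^{f}})=0$: no such $c$ exists. This contradiction --- at the point $s\in S-Z$ --- shows that $(E_{unif},\theta_{unif})$ is not periodic, which proves Proposition \ref{nonperiodicity of elliptic curve}. (By contrast, for an ordinary elliptic curve $H^1_{\mathrm{et}}(X,\F_p)$ is one-dimensional and the uniformizing Higgs bundle is periodic, so the dichotomy is exactly between ordinary and supersingular reduction.)

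I expect the main obstacle to be the precise form of the correspondence invoked in the crux step over a smooth \emph{projective} curve: besides the (here automatic) $p$-smallness and nilpotence hypotheses, one must know that the functor from periodic Higgs--de Rham flows to $\F_{p^{f}}$-representations is exact and faithful enough to carry a \emph{nonsplit} self-extension to a \emph{nonsplit} self-extension --- which forces one to check that the sub-object $(\mathcal{O}_X,0)$ is stable under a periodic flow structure realizing $(\mathcal{O}_X\oplus\mathcal{O}_X,\Theta_{unif,s})$ as periodic. A secondary and essentially routine point is the claim that the supersingular locus of $\mathcal{C}\to S$ is not contained in any proper closed $Z\subset S$, which follows by comparing the growth of the degree of the Hasse divisor with the bounded degree of the reduction of $Z$ modulo $p$ in the fibres.
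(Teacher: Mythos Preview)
Your approach via the Lan--Sheng--Zuo correspondence is genuinely different from the paper's and is correct in spirit, but the argument as written has exactly the gap you yourself flag. The paper instead argues directly with the inverse Cartier transform: it shows (in the three lemmata preceding the proposition) that at a supersingular point $s$ the bundle underlying $C^{-1}(\sO_{\sC_s}^{\oplus 2},\theta_{unif})$ is the nonsplit Atiyah bundle $\sN_s$, because the extension class is computed by the obstruction to lifting Frobenius to $W_2$, and that obstruction is nonzero precisely when $\sC_s$ is supersingular (the kernel of Frobenius is $\alpha_p$, which does not lift). One then checks by hand that from $\sN_s$ every Hodge filtration produces a graded Higgs bundle with \emph{zero} Higgs field, so the flow can never return to $(\sO^{\oplus 2},\theta_{unif})$. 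This is elementary and avoids the LSZ black box entirely.

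Your gap is real: to conclude that $\rho$ is a \emph{nonsplit self-extension of the trivial representation}, you need the Higgs sub-bundle $(\sO_X,0)$ to be a sub-object \emph{in the category of periodic flows} --- compatible with the chosen Hodge filtrations and the periodicity isomorphism $\phi$ --- and you further need that sub-flow to correspond to the trivial character rather than some other rank-one object. Neither is automatic from the LSZ equivalence. The clean fix is to bypass this entirely and argue via semisimplicity: since $X$ is supersingular, every finite quotient of $\pi_1^{\mathrm{et}}(X)$ has order prime to $p$, so every $\F_{p^f}$-representation is semisimple; since $\pi_1^{\mathrm{et}}(X)$ is moreover abelian, after replacing $f$ by a multiple the representation splits as a sum of characters. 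Then the LSZ equivalence forces the flow itself to decompose as a direct sum of rank-one flows, whence the underlying Higgs bundle is a direct sum of line bundles with zero Higgs field --- contradicting $\theta_{unif}\neq 0$. This closes your gap and has the conceptual merit of explaining \emph{why} supersingularity is the obstruction (no $p$-part in $\pi_1$); the paper's direct computation, by contrast, yields finer information about the actual shape of the flow, which is reused in the subsequent classification of periodic Higgs bundles over elliptic curves.
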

	It would follow from the polystability conjecture (see Conjecture \ref{polystability} below), that $(E_{unif},\theta_{unif})$ over a complex elliptic curve is non-periodic without any condition on primes of supersingular reduction. On the other hand, the non-periodicity of $(E_{unif},\theta_{unif})$ also follows from a conjecture of Serre. Indeed, if the $j$-invariant of $C$ is transcendental, then $C$ has infinitely many primes of supersingular reduction \footnote{To see this, one may take the Legendre family $\{Y^2=X(X-Z)(X-tZ)\}\subset \P^2_S$ over $S=\Spec(\Z[t,\frac{1}{t},\frac{1}{t-1}])$ as a spreading-out of $C$.}. The condition in Proposition \ref{nonperiodicity of elliptic curve} for $C$ defined over \emph{number field} is exactly conjectured by Serre. This conjecture has been proved by Elkies, for $C$ defined either over a number field of \emph{odd} degree over $\Q$ \cite{Elkies} or a number field that admits at least one real embedding \cite{Elkies89}. Surprisingly, the polystability conjecture in the case of elliptic curve is equivalent to the conjecture of Serre.
	
	\begin{theorem}[Theorem \ref{classification}]
		Let $C$ be a complex elliptic curve. Then the following two statements are equivalent.
		\begin{itemize}
			\item [(i)] $C$ has infinitely many primes of supersingular reduction.
			\item [(ii)] The category of periodic Higgs bundles over $C$ is semisimple whose simple objects are torsion line bundles equipped with the zero Higgs field.
		\end{itemize}
	\end{theorem}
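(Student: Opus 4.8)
The plan is to run both implications through two inputs. The first is the Lan--Sheng--Zuo correspondence: over a smooth projective curve $X$ over $\overline{\F}_p$ carrying a $W_2$-lift, the category of periodic Higgs bundles is abelian and equivalent to the category of $\overline{\F}_p$-local systems on $X$, an object being periodic of period dividing $f$ exactly when the associated representation of $\pi_1^{\text{\'et}}(X)$ takes values in $\GL_r(\F_{p^f})$, and the rank-one periodic objects being the torsion line bundles of order prime to $p$ with vanishing Higgs field. The second input is the elementary description of $\pi_1^{\text{\'et}}(X)$ for an elliptic curve $X/\overline{\F}_p$: it is abelian, being the Tate module of $X$, and its pro-$p$ part is trivial if $X$ is supersingular and $\cong \Z_p^2$ if $X$ is ordinary.

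For (i) $\Rightarrow$ (ii), I would first dispatch the ``trivial'' subcategory: a torsion line bundle $L$ of order $n$ with the zero Higgs field is periodic (of period dividing $\varphi(n)$, uniformly over the primes not dividing $n$), it is simple, and $\Hom((L,0),(L',0))=H^0(C,L^{\vee}\otimes L')$ is $\C$ or $0$ according as $L\cong L'$ or not; it then remains to show every periodic Higgs bundle $(E,\theta)$ is a direct sum of such. Fix a spreading-out $(\sC,\sE,\Theta)/S$ with exceptional locus $Z$ realizing periodicity of period $\leq f$. By (i) there are supersingular geometric points $s\in S-Z$ of arbitrarily large residue characteristic $p$; at any such $s$, $\sC_s$ is supersingular, so by the two inputs the representation attached to the period-$\leq f$ Higgs bundle $(\sE_s,\Theta_s)$ factors through a finite quotient of $\pi_1^{\text{\'et}}(\sC_s)$ of order prime to $p$, hence is a sum of torsion characters; translating back, $(\sE_s,\Theta_s)\cong\bigoplus_i(L_{i,s},0)$ with each $L_{i,s}$ of order prime to $p$. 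So $\Theta$ vanishes on $\sC_s$, and $\sE_s$ is totally split into line bundles, for infinitely many $s$ of unbounded characteristic. To promote ``$\Theta_s=0$'' to ``$\theta=0$ on $C$'' one reduces to the case of a one-dimensional base: this is immediate when $j(C)\in\overline{\Q}$ (after first specializing, if necessary, so that $(E,\theta)$ itself is defined over $\overline{\Q}$; a general such specialization remains periodic), in which case the supersingular points are Zariski dense in $S$; and when $j(C)$ is transcendental one specializes $C$ to an elliptic curve over $\Q$, which has infinitely many supersingular primes by Elkies \cite{Elkies}, and invokes the case just settled for a Zariski dense family of such specializations. Once $\theta=0$: the bundle $E$, semistable of degree $0$ since $(E,\theta)$ is periodic, is totally split into line bundles modulo infinitely many primes; writing $E=\bigoplus_j M_j\otimes A_{r_j}$ by Atiyah's classification, with $A_{r_j}$ the rank-$r_j$ Atiyah bundle, and observing that the nonzero extension class defining $A_{r_j}$ stays nonzero under reduction, each $A_{r_j,s}$ remains indecomposable, forcing $r_j=1$. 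Thus $E=\bigoplus_j M_j$; each $(M_j,0)$ is a direct summand of the periodic $(E,0)$, hence periodic, hence $M_j$ is torsion. Therefore every periodic Higgs bundle on $C$ is a direct sum of torsion line bundles carrying the zero Higgs field, and since these are simple, have only scalar endomorphisms, and admit no nonzero morphism between non-isomorphic ones, the category of periodic Higgs bundles on $C$ is semisimple with exactly these as simple objects, which is (ii).

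For (ii) $\Rightarrow$ (i) I would argue the contrapositive. Suppose $C$ has only finitely many supersingular primes; then for a spreading-out $\sC\to S$ the fibre $\sC_s$ is ordinary for every $s$ outside a finite set. On an ordinary elliptic curve $\sC_s/\overline{\F}_p$ the uniformizing Higgs bundle $(\sE_{unif,s},\Theta_{unif,s})$---which up to a two-torsion twist is $\sO_{\sC_s}\oplus\sO_{\sC_s}$ with the nilpotent field $\Theta_{unif}$---corresponds under the dictionary to a nontrivial unipotent $\overline{\F}_p$-local system factoring through the $\Z_p^2$-quotient of $\pi_1^{\text{\'et}}(\sC_s)$, and is therefore periodic with period bounded independently of $p$. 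Putting the finitely many supersingular and bad primes into the exceptional locus exhibits $(E_{unif},\theta_{unif})$ as a periodic Higgs bundle on $C$. But it is indecomposable---its endomorphism ring in the category of Higgs bundles is local---and $\theta_{unif}\neq 0$, so it is not a direct sum of torsion line bundles with vanishing Higgs field; hence the category of periodic Higgs bundles on $C$ fails to be semisimple of the shape in (ii), and (ii) forces (i).

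I expect the main obstacle to be the descent in (i) $\Rightarrow$ (ii) from the fibrewise triviality at supersingular points to the equality $\theta=0$ on $C$: when the spreading-out has relative dimension at least one the supersingular locus need not be Zariski dense, so one is pushed into the specialization argument, which in the transcendental case relies on Elkies' theorem for elliptic curves over $\Q$ (and, more generally, over odd-degree or totally real number fields \cite{Elkies,Elkies89}). A second point requiring a genuine computation rather than a formal argument is the uniform-in-$p$ periodicity of $(E_{unif},\theta_{unif})$ over ordinary elliptic curves, which should be handled by an explicit Higgs--de Rham flow analysis in the spirit of \cite{LS}.
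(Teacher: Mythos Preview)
Your contrapositive for (ii)$\Rightarrow$(i) contains a genuine error. You assert that on an ordinary reduction $\sC_s$ the uniformizing Higgs bundle $(\sO^{\oplus 2}_{\sC_s},\theta_{unif})$ is periodic with uniformly bounded period, corresponding under the dictionary to a nontrivial unipotent local system. But the inverse Cartier transform of this bundle depends on the chosen $W_2$-lifting (the Higgs field is nonzero), and Lemmata~\ref{criterion for uniformizing}--\ref{structure of V} of the paper show that $(\sO^{\oplus 2}_{\sC_s},\theta_{unif})$ is one-periodic if and only if the absolute Frobenius lifts to $\sC_{\tilde s}$. For an ordinary elliptic curve this holds only when $\sC_{\tilde s}$ is the Serre--Tate canonical lift, which a generic spreading-out will not produce. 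So the uniformizing bundle is generally \emph{non}-periodic even at ordinary primes, and your candidate witness fails. The paper instead uses $(N,0)$, the nontrivial self-extension of $\sO_C$: since its Higgs field vanishes, its inverse Cartier is simply Frobenius pullback (independent of the $W_2$-lifting), and $F^*N\cong N$ at ordinary primes because the Hasse--Witt map is an isomorphism there. Incidentally, the pro-$p$ part of $\pi_1^{\text{\'et}}$ of an ordinary elliptic curve over $\overline{\F}_p$ is $\Z_p$, not $\Z_p^2$; and the unipotent local system you have in mind corresponds under the dictionary to $(N,0)$, not to the uniformizing bundle.

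Your route for (i)$\Rightarrow$(ii), by contrast, is genuinely different from the paper's and appears to work. The paper argues entirely over $\C$: it first shows directly that any nonzero extension of $(\sO_C^{\oplus r},0)$ by $(\sO_C^{\oplus s},0)$ is non-periodic (bootstrapping from the rank-two case via Proposition~\ref{nonperiodicity for unif} and Lemma~\ref{The case of N}), then for a general periodic $(E,\theta)$ isolates the maximal direct sum of degree-zero Higgs sub line bundles $F\subset E$, proves $F$ and $E/F$ are both periodic, and shows the extension splits after an \'etale multiplication-by-$m$ pullback. Your approach instead transports the problem to supersingular fibers via the Lan--Sheng--Zuo correspondence, where $\pi_1^{\text{\'et}}$ being abelian of prime-to-$p$ profinite order forces semisimplicity. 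This is cleaner conceptually, and your worry about the descent step is overstated: the supersingular locus in $S$ is the preimage of a Zariski-dense subset of either $\Spec\sO_K$ (if $j(C)\in\overline\Q$) or the $j$-line over $\Z$ (if $j(C)$ is transcendental), and preimages of dense sets under dominant morphisms are dense, so $\{\Theta=0\}$, being closed, must be all of $S$. The one place you do gloss over is step 5: ``a direct summand of a periodic Higgs bundle is periodic'' is not formal, and is exactly the content of the paper's Corollary~\ref{direct sum of line bundles}, which requires a short argument tracking sub line bundles through the flow.
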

	Property (ii) is predicted by the aforementioned polystability conjecture, because polystable Higgs bundles over elliptic curves must be direct sums of line bundles, and we show that, over any smooth projective curve, a direct sum of line bundles is periodic iff each factor is torsion (Corollary \ref{direct sum of line bundles}). (This latter fact is a Higgs analog of the Grothendieck-Katz $p$-curvature conjecture for semi-simple abelian flat connections, shown by Chudnovsky-Chudnovsky.)

	We turn to higher genus curves. Note that $(E_{unif},\theta_{unif})$ over a hyperbolic curve $C$ is stable. Hitchin's uniformization theory is encoded in the following diagram:
	\vspace{3mm}
	\hspace*{-3em} 
	\begin{tikzpicture}[commutative diagrams/every diagram]
		\matrix[matrix of nodes, name=a] {
			&
			& [-3em] |[name=a12]| {\footnotesize $\Set{(V_{unif},\nabla_{unif}, Fil_{unif})}$}
			& \\
			\\
			\\
			& |[name=a21]| {\footnotesize $\Set{\rho_{unif}: \ \pi^{top}_1(C)\to \mathrm{SL}_2(\mathbb{R})}$}
			&
			& [-2em] |[name=a23]| {\footnotesize $\Set{(E_{unif},\theta_{unif})}.$}
			\\
		};
		\path[commutative diagrams/.cd, every arrow, every label]
		(a12) edge[commutative diagrams/rightarrow] node[auto]{\(Gr_{Fil}\)} (a23)
		(a12) edge[commutative diagrams/leftarrow, dashed, bend left] node[auto, pos=0.82]{\(\parbox{9.8em}{\begin{center}{\footnotesize Solution of \\ Hitchin's equation }\end{center}}\)} (a23)
		(a21) edge[commutative diagrams/leftrightarrow] node[auto]{\(\parbox{7em}{\footnotesize{Riemann-Hilbert}}\)} (a12)
		;\end{tikzpicture}

	\vspace{3mm}
	
	In the diagram, $V_{unif}$ is the unique nontrivial extension of $K_C^{-\frac{1}{2}}$ by $K_C^{\frac{1}{2}}$ and $Fil_{unif}$ is the unique sub line bundle of maximal degree. Faltings \cite{Fa83} searched for an algebraic construction of $\nabla_{unif}$ in the diagram, but came up with a negative result. Our study of periodicity of $(E_{unif},\theta_{unif})$ provides a different perspective on this negative result.
	
	We call a smooth projective curve $C/\mathbb{C}$ of genus $g\geq 2$ \emph{generic} if the associated moduli map $\Spec(\C)\rightarrow \sM_g$ to the moduli stack of genus $g$ curves over $\Spec(\mathbb{Z})$ is dominant. 
	\begin{proposition}[Proposition \ref{non-periodicity}] Let $C$ be a generic hyperbolic curve of genus $g$. Then $(E_{unif},\theta_{unif})$ is non one-periodic. Consequently, away from a countable union of proper closed subsets of $\sM_g(\C)$, $(E_{unif},\theta_{unif})$ is non one-periodic.
	\end{proposition}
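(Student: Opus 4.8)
The plan is to prove that $(E_{unif},\theta_{unif})$ is non one-periodic for a generic curve $C$, and to deduce from this the statement that non one-periodicity holds away from a countable union of proper closed subsets of $\sM_g(\C)$. For the deduction: by the comments following the definition of periodicity, (one-)periodicity of $(E_{unif},\theta_{unif})$ is an invariant of the moduli point of $C$, and, being formulated entirely through spreading-outs over finitely generated $\Z$-subalgebras of $\C$ and their mod-$p$ reductions, it is unchanged under the action of $\mathrm{Aut}(\C)$ on $\C$-points of $\sM_g$. Fix a fine moduli scheme $\sM_{g,\ell}$ of genus-$g$ curves with level-$\ell$ structure ($\ell\ge 3$), smooth of relative dimension $3g-3$ over $\Z[1/\ell]$ and (after restricting to a geometric component) geometrically irreducible. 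Two generic curves $C,C'/\C$, equipped with level structures, correspond to $\C$-points of $\sM_{g,\ell}$ lying over its generic point, i.e.\ to two embeddings of $K_\ell:=\Q(\sM_{g,\ell})$ into $\C$; any two such embeddings differ by an automorphism of $\C$, so $C'$ is a conjugate of $C$ and has the same one-periodicity status. Finally, $\sM_{g,\overline{\Q}}$ is of finite type over the countable field $\overline{\Q}$, hence has only countably many proper closed subvarieties $V$, and a $\C$-point of $\sM_g$ is generic precisely when it avoids $\bigcup_V V(\C)$; so non one-periodicity for one generic curve forces it on the complement of a countable union of proper closed subsets of $\sM_g(\C)$.

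Suppose, for contradiction, that $(E_{unif},\theta_{unif})$ is one-periodic for a generic curve $C$. Such a $C$ with level structure is, by construction, the base change along some $\iota:K_\ell\hookrightarrow\C$ of the universal curve over the generic point of $\sM_{g,\ell}$; hence, possibly after a further finite \'etale base change to provide a relative theta characteristic, we may choose a spreading-out $(\sC,\sE_{unif},\Theta_{unif})\to S$ with $S=\Spec(A)$ a dense affine open of $\sM_{g,\ell}$, $A\subset K_\ell\subset\C$, $\sC$ the restriction of the universal curve, and $(\sE_{unif},\Theta_{unif})$ the relative uniformizing Higgs bundle. In particular $S$ is smooth over $\Z[1/\ell]$ of relative dimension $3g-3\ge 1$ and is itself a moduli space of level-$\ell$ curves. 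By \cite[Lemma 4.8]{KS} (applied with period one), one-periodicity of $(C,E_{unif},\theta_{unif})$ yields one-periodicity for this spreading-out: there is a proper closed subscheme $Z\subsetneq S$ such that for every geometric point $s$ of $S-Z$ the reduction $(\sE_{unif,s},\Theta_{unif,s})$ is one-periodic with respect to \emph{every} $W_2(k(s))$-lift $\tilde s\to S$ of $s$.

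The essential input --- and, I anticipate, the step requiring the most care to set up --- is the translation of one-periodicity of Hitchin's Higgs bundle in characteristic $p$ into Mochizuki's theory of $p$-adic Teichm\"uller uniformization: for a smooth projective curve $X/\overline{\F}_p$ of genus $\ge 2$ (with $p>2$) and a $W_2$-lift $\tilde X$, the inverse Cartier transform of $(E_{unif},\theta_{unif})$ relative to $\tilde X$, equipped with its Hodge filtration, is a nilpotent indigenous bundle on $X$, and one-periodicity with respect to $\tilde X$ is precisely the assertion that this bundle is Frobenius-invariant; this forces $X$ to be hyperbolically ordinary and $\tilde X$ to reduce modulo $p^2$ to the \emph{canonical} $p$-adic lift of $X$, which is unique. (Equivalently, one may package one-periodicity through the associated crystalline / Fontaine--Faltings local system and invoke the same uniqueness.) Granting this, the contradiction is routine: for all but finitely many primes $p$ the fibre $(S-Z)_{\overline{\F}_p}$ is nonempty, so fix such a $p$ with $p>2$, $p\nmid\ell$, and a geometric point $s$ of $S-Z$ over $\overline{\F}_p$. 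Since $S$ is smooth over $\Z[1/\ell]$ of relative dimension $3g-3\ge 1$, the $W_2(\overline{\F}_p)$-lifts of $s$ in $S$ form a torsor under a nonzero $\overline{\F}_p$-vector space, hence there are at least two distinct such lifts; as $\sM_{g,\ell}\to\sM_g$ is \'etale, these correspond to at least two non-isomorphic $W_2(\overline{\F}_p)$-lifts $\tilde{\sC}_{s,1}\not\cong\tilde{\sC}_{s,2}$ of the curve $\sC_s$, with respect to both of which $(E_{unif},\theta_{unif})$ is one-periodic. But then $\tilde{\sC}_{s,1}$ and $\tilde{\sC}_{s,2}$ both equal the canonical lift of $\sC_s$ modulo $p^2$, hence are isomorphic --- a contradiction. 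Therefore $(E_{unif},\theta_{unif})$ over a generic curve is non one-periodic, which completes the proof.
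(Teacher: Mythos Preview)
Your reduction of the ``consequently'' clause to the generic case is correct and more carefully articulated than in the paper. The overall architecture of the main argument is also right: one-periodicity with respect to a $W_2$-lift $\tilde X$ constrains $\tilde X$, the space of all $W_2$-lifts has dimension $3g-3\geq 1$, and if the constraint cuts down to something smaller you get a contradiction.

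The gap is your central claim that one-periodicity forces $\tilde X$ to be \emph{the unique} canonical $p$-adic lift of $X$. In Mochizuki's theory the canonical lift is attached to a choice of nilpotent ordinary indigenous bundle on $X$, and a given curve in characteristic $p$ typically carries several such bundles; correspondingly there can be several $W_2$-lifts for which $C^{-1}_{\tilde X}(E_{unif},\theta_{unif})$ admits the required Hodge filtration. Your contradiction, as written, uses uniqueness and so fails. What is actually needed---and what you have black-boxed under ``the step requiring the most care''---is \emph{finiteness} of this set of lifts for generic $X$, together with the translation between one-periodicity and the indigenous-bundle picture. Neither is supplied.

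The paper proves exactly this finiteness, but by a concrete route from \cite{LSYZ} rather than by invoking Mochizuki directly. The $W_2$-lifts of $X$ embed as a $(3g-3)$-dimensional affine subspace $A\subset H^1(X,K_X^{-p})$, and one-periodicity forces the lift into $A\cap K$ for a certain cone $K$ of complementary dimension (Lemma~\ref{A cap K}). Finiteness of $A\cap K$ for generic $X$ is then obtained by degeneration: for a totally degenerate stable curve one has $|A\cap K|=1$ \cite[Proposition~4.7]{LSYZ}, and semicontinuity propagates finiteness to a nonempty open of $\sM_g\times\overline{\F}_p$. Once you replace ``unique'' by ``finite'' your argument becomes morally equivalent to the paper's, but the finiteness itself is the entire content, and the $A\cap K$ machinery is how it is established here.
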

	Note that if $(E_{unif},\theta_{unif})$ over a generic hyperbolic curve was one-periodic, then $\nabla_{unif}$ could be reconstructed from the inverse Cartier transforms of the various mod $p$ reductions of $(E_{unif},\theta_{unif})$. One may even speculate that, over a generic hyperbolic curve, $(E_{unif},\theta_{unif})$ is non-periodic. We indeed conjecture this to be the case, but proving this seems substantially more difficult than the one-periodic case. 
	
	Let $U$ be a nonempty Zariski open subset of $C$ and $f: X\to U$ a smooth projective morphism. For each $i\geq 0$, one associates to $f$ the $i$-th \emph{parabolic Gau\ss-Manin system}, which is a parabolic de Rham bundle defined over $C$ (see the paragraph following Lemma 3.3 \cite{KS}). When the $i$-th higher direct image $R^if_*\C_X$ has trivial local monodromies at infinity, it is actually a de Rham bundle over $C$. 
	\begin{definition}\label{motivic}
		Let $C$ be a smooth projective curve over $\C$ and $(E,\theta)$ a graded Higgs bundle over $C$. We say $(E,\theta)$ is \emph{motivic}, if there is a smooth projective morphism $f: X\to U$, where $U$ is some nonempty Zariski open subset of $C$, and some $i\geq 0$, such that $(E,\theta)$ is isomorphic to a direct factor of the associated parabolic graded Higgs bundle to the $i$-th parabolic Gau\ss-Manin system. 
	\end{definition}
	In the above definition, we regard $(E,\theta)$ over $C$ as a parabolic graded Higgs bundle with empty parabolic support (see \cite[Definition 3.4]{KS} for the general case). The Higgs periodicity theorem (\cite[Theorem 1.3]{LS}, \cite[Theorem 5.6]{KS}) asserts that \emph{a motivic Higgs bundle is periodic}.  
	\begin{theorem}[Theorem \ref{intro_geo_modular}]
		Let $C$ be a smooth projective hyperbolic curve. Then the following statements are equivalent.
		\begin{enumerate}
			\item [(i)] $(E_{unif},\theta_{unif})$ is motivic.
			\item [(ii)] $C$ admits a modular embedding.
		\end{enumerate}
	\end{theorem}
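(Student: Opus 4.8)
The plan is to prove the two implications separately; $(ii)\Rightarrow(i)$ is essentially a construction, while $(i)\Rightarrow(ii)$ carries the real content. For $(ii)\Rightarrow(i)$, assume $C$ admits a modular embedding. I would first replace $C$ by a finite \'etale cover and delete a finite set $C\setminus U$ --- neither operation affects motivicity, by the behaviour of parabolic Gau\ss-Manin systems under finite pushforward --- so that the modular embedding becomes an inclusion $\Gamma=\rho_{unif}(\pi_1^{\mathrm{top}}(C))\hookrightarrow\Lambda$ into an arithmetic quaternionic (or Hilbert) modular group, a compatible map $\psi\colon U\to\mathrm{Sh}=\Lambda\backslash\mathcal{D}$, and an equivariant lift $\widetilde\psi\colon\HH\to\mathcal{D}=\HH\times(\cdots)$ whose distinguished coordinate is the identity. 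Pulling back the universal polarized abelian scheme from $\mathrm{Sh}$ along $\psi$ gives a smooth projective $f\colon X\to U$, and over a suitable number field, $R^1f_*\C$ decomposes --- via the endomorphisms of the fibres --- into rank-$2$ pieces indexed by archimedean places of the coefficient field. The piece at the distinguished place has period map the distinguished coordinate of $\widetilde\psi$, i.e.\ the identity of $\HH$, hence maximal Higgs field, hence its associated graded Higgs bundle on $C$ is the uniformizing Higgs bundle up to a torsion line bundle twist. Since such a twist is immaterial, $(E_{unif},\theta_{unif})$ is a direct factor of the parabolic graded Higgs bundle of the first parabolic Gau\ss-Manin system of $f$, i.e.\ motivic.

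For $(i)\Rightarrow(ii)$, assume $(E_{unif},\theta_{unif})$ is a direct factor of the parabolic graded Higgs bundle of $R^if_*\C$ for some smooth projective $f\colon X\to U$. Simpson's nonabelian Hodge correspondence on $C$, in the parabolic form of \cite{KS}, shows this factor comes from a polarizable sub-$\C$-variation of Hodge structure $\W$ on $U$; the empty parabolic support forces $\W$ to have trivial local monodromy at $C\setminus U$ (after a further finite cover if needed), so $\W$ is a rank-$2$ $\C$-VHS on $C$ with maximal Higgs field. Two rigidity statements then drive everything. First, any rank-$2$ polarizable $\C$-VHS on a curve is, up to a Tate twist, either unitary or of weight $1$ and Hodge type $(1,0)+(0,1)$ with maximal Higgs field: the Hodge filtration jumps at most twice, polarizability makes the bidegree set symmetric under $(p,q)\mapsto(q,p)$, and Griffiths transversality kills the Higgs field unless the two jumps are adjacent, in which case symmetry centres them and a Tate twist lands the VHS in weight $1$. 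Second, if such a VHS has maximal Higgs field then --- because the graded Higgs bundle of a VHS on a projective curve is polystable, and $V_{unif}$ is the unique flat bundle with Higgs bundle $(E_{unif},\theta_{unif})$ --- its graded Higgs bundle equals $(E_{unif},\theta_{unif})$ up to a torsion twist, its monodromy is $\rho_{unif}$ up to that twist, its (real) trace field $F$ is thereby canonically determined, and its period map is an automorphism of $\HH$.

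Normalizing $\W$ to weight $1$, I would choose a number field $E\subset\C$ of definition for $\W$ with $F\subseteq E$; by the second rigidity statement the given embedding $E\hookrightarrow\C$ is real. Then I would pass to the $\Q$-VHS $\U=\mathrm{Res}_{E/\Q}\W$, with $\U\otimes\C=\bigoplus_{\tau\colon E\hookrightarrow\C}\W^{\tau}$, each $\W^{\tau}$ a rank-$2$ summand of the $\tau$-conjugate Gau\ss-Manin system. By the first rigidity statement each $\W^{\tau}$ is unitary or of maximal-Higgs weight-$1$ type, and by the second the latter forces $\tau|_F=\mathrm{id}$; hence every $\tau$ not fixing $F$ is unitary, and $\U$ is a polarizable $\Q$-VHS of pure weight $1$ with $h^{1,0}=h^{0,1}$, carrying a faithful action of $E$ (and possibly of a quaternion algebra over a totally real subfield), whose period domain is a product of copies of $\HH$ with compact factors, hence Hermitian symmetric. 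After a finite \'etale cover of $U$ to install a lattice and level structure (immaterial for the conclusion), Deligne's theorem realizing polarizable weight-$1$ $\Z$-VHS as families of abelian varieties gives an abelian scheme $g\colon\mathcal{B}\to U$, whose period map embeds $U$, and then $C$ by quasi-unipotence of the monodromy at $C\setminus U$, into the associated Shimura variety. The Viehweg--Zuo characterization of Shimura curves in the moduli of abelian varieties --- through a rank-$2$ sub-VHS with maximal Higgs field and unitary complement, namely $\W\subset\U$ --- shows the image is totally geodesic, i.e.\ the map is modular; conjugating by the automorphism of $\HH$ given by the period map of $\W$ makes the distinguished coordinate of the equivariant lift the identity, and the resulting data is a modular embedding of $C$.

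I expect the hard part to be the two rigidity statements together with the point at which they are combined, since this is where one must use that $\W$ is a direct factor of a \emph{genuine} Gau\ss-Manin system --- so that it and all its Galois conjugates are polarizable $\C$-VHS to which the first statement applies --- rather than an abstract Higgs bundle: the interplay of the rank-$2$ dichotomy with the invariance of the trace field is what rules out indefinite behaviour at the non-real places and forces the totally real quaternionic structure, hence the Hermitian period domain on which Deligne's theorem operates. The surrounding bookkeeping --- reductions to finite \'etale covers, tracking the torsion-line-bundle ambiguity of $(E_{unif},\theta_{unif})$, matching parabolic structures along $C\setminus U$, and arranging the integral structure so that the hypotheses of Deligne's and Viehweg--Zuo's theorems are literally met --- should be routine within the formalism of \cite{KS}, but has to be carried out carefully to turn the two implications into an honest biconditional.
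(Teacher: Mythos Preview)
For $(ii)\Rightarrow(i)$ your approach coincides with the paper's (Lemma~\ref{modular_characterization}): pass to a finite \'etale cover, pull back a universal abelian scheme along the modular embedding, and locate the uniformizing Higgs bundle inside the resulting Kodaira--Spencer system via Kucharczyk's theorem.

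For $(i)\Rightarrow(ii)$ the routes differ. The paper does not build the target by restriction of scalars and Viehweg--Zuo; instead it invokes Corlette--Simpson \cite[\S9--10]{CS} as a black box (packaged as Proposition~\ref{real_simpson}): an integral rank-two local system whose Galois conjugates all underlie $\C$-VHS factors through a map to one of their ``polydisk Shimura modular varieties'', and the paper's own contribution (Lemma~\ref{polydisk}) is to identify these with connected quaternionic Shimura varieties. Your approach is in effect an attempt to redo \cite[\S9--10]{CS} in more geometric language.

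The main gap in your version is the step from ``each $\W^\tau$ underlies a polarizable $\C$-VHS'' to ``$\U=\mathrm{Res}_{E/\Q}\W$ is a polarizable $\Q$-VHS of weight $1$''. This is not formal: the Hodge filtrations on the individual $\W^\tau$, and in particular the Tate twists placing the unitary pieces, must be chosen so that the total filtration descends to the $\Q$-structure and is polarizable there. That is exactly what Corlette--Simpson's ``VHS of type $(P,\Phi)$'' machinery handles, and it relies on first knowing the coefficient field may be taken to be CM (Larsen's lemma \cite[Lemma~4.8]{Sim92}), a point you do not address. Secondarily, Viehweg--Zuo tells you the image in the moduli of abelian varieties is a Shimura curve, but extracting a modular embedding in the required sense---a map to a \emph{quaternionic} Shimura variety on which one projection to $\PSL_2(\R)$ recovers $\rho_{unif}$---still requires identifying the relevant Shimura datum as quaternionic, which is precisely the content of Lemma~\ref{polydisk}. (Minor correction: in your first rigidity statement, a non-unitary rank-two $\C$-VHS has nonzero but not necessarily \emph{maximal} Higgs field; this is harmless here, since only the distinguished conjugate needs maximality.)
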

	\begin{remark}\label{remark on modular embedding}
		The theory of modular embeddings is due primarily to Cohen-Wolfart, Schmutz Schaller-Wolfart, and Kucharczyk \cite{Wolf,SM,Kuc}. A conjecture of Chudnovsky-Chudnovsky \cite{CC90} asserts that curves admiting modular embeddings form a narrow class: Shimura curves and triangle curves (see \cite[Problem 1, Remark 9]{SM}). Our periodic Higgs conjecture \ref{motivicity conj} predicts that $(E_{unif},\theta_{unif})$ being periodic actually means motivicity. These two conjectures combine into a satisfactory answer to the periodicity problem of $(E_{unif},\theta_{unif})$. However, both conjectures remain largely open so far. 
	\end{remark}
	
	As remarked, the uniformizing Higgs bundle attached to a Shimura curve is periodic. The theory of canonical models of Shimura varieties provides a \emph{distinguished} spreading-out for a Shimura curve. Periodicity of $(E_{unif},\theta_{unif})$ with respect to this spreading-out yields a Hasse-Witt invariant for each good reduction of a Shimura curve. Let $F$ be a totally real field and $D$ a quaternion algebra over $F$ which splits at exactly one real place $\tau$. Let $G_{\Q}$ be the reductive $\Q$-group $Res_{F/\Q}D^\times$. Let $M$ be an associated Shimura curve of Hodge type defined by a symplectic representation $G_{\Q}\to GSp_{2n}$ together with a level structure. Let $\mathfrak p$ be an odd prime of $F$ such that $\frak p$ is unramified over $\Q$ and does not divide the discriminant of $D$. Set $\F_{q}=k_{\mathfrak p}$, the residue field of $F$ at $\mathfrak p$. By a result of Kisin \cite[Theorem 2.3.8]{Ki}, there is a canonical smooth integral model $\scrM$ of $M$ over the ring of integers $\sO_{F_{\mathfrak{p}}}$, together with an abelian scheme over the integral model. The Hasse-Witt invariant, derived from the periodicity of $(E_{unif},\theta_{unif})$ at $\mathfrak p$, enables us to resolve Conjecture 1.3 \cite{SZZ}, and we obtain thereby a \emph{Deuring-Eichler mass formula} for a Shimura curve of Hodge type.
	\begin{theorem}[Corollary \ref{independence}, Corollary \ref{massformula}]\label{main_result}
		Let $\sM\hookrightarrow \sA_n$ be the good reduction of a Shimura curve as above.
		\begin{itemize}
			
			\item [(i)] There are exactly two Newton strata appearing in $\sM(\overline \F_p)$;
			
			\item [(ii)] The Newton jumping locus $\sN_p\subset \sM(\overline \F_p)$ is independent of symplectic representation of $G_{\Q}$;
			
			\item [(iii)] The mass formula reads:
			
			$$|\sN_p|=(1-q)\frac{\chi_{top}(M(\C))}{2}.$$
			
		\end{itemize}
		
	\end{theorem}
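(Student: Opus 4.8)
The strategy is to convert the periodicity of $(E_{unif},\theta_{unif})$ at $\mathfrak p$ into an explicit \emph{Hasse--Witt invariant} on the special fibre $\sM/\F_q$, and then to read off (i)--(iii) from the geometry of its zero locus. First I would record the dictionary between $(E_{unif},\theta_{unif})$ and the universal abelian scheme. Over Kisin's canonical smooth model $\scrM/\sO_{F_{\mathfrak p}}$ one has the universal abelian scheme $\pi\colon\sA\to\scrM$; its first relative de Rham bundle, with the Gau\ss--Manin connection and the Hodge filtration, decomposes under the $\sO_F$-action, and passing to the associated graded the isotypic component at the split real place $\tau$ is a rank-two Higgs bundle over $\scrM$. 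The hypotheses that $\mathfrak p$ is odd, unramified over $\Q$ and prime to $\mathrm{disc}(D)$ guarantee this component is a well-behaved rank-two object; by the Kodaira--Spencer isomorphism for the one-dimensional Shimura datum its Hodge line bundle is $K_{\scrM}^{1/2}$ (for the square root matching the chosen $K_C^{1/2}$) and the Higgs field is Kodaira--Spencer, so this component is, up to twist by a two-torsion line bundle, the spreading-out $(\sE_{unif},\Theta_{unif})$ of $(E_{unif},\theta_{unif})$ carried by $\scrM$. Thus $\scrM$ provides the \emph{distinguished} spreading-out, and since Shimura curves admit modular embeddings (Theorem~\ref{intro_geo_modular}), $(E_{unif},\theta_{unif})$ is motivic, hence periodic by the Higgs periodicity theorem; let $f$ be a period at $\mathfrak p$.

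Next, and this is the crux, I would extract the Hasse--Witt invariant. Iterating the inverse Cartier transform over $\sM$ for the length $f$ of a period and composing with the periodicity isomorphism produces, on the rank-two bundle above, a Frobenius-semilinear endomorphism which, after unwinding the identification with the Dieudonn\'e crystal of $\sA$, is the $\mathrm{Frob}_q$-semilinear Hasse--Witt operator of $\sA$ in the $\tau$-component; it is here that one must match the Higgs--de Rham flow with the crystalline Frobenius and where the exponent $q$ appears. Restricting it to the quotient line bundle $K_{\sM}^{-1/2}$ (the ``$H^1(\sO)$-part'') gives the Hasse--Witt invariant $h\in H^0\!\big(\sM,\ \mathrm{Frob}_q^{*}(K_{\sM}^{1/2})\otimes K_{\sM}^{-1/2}\otimes L_0\big)$ for a torsion line bundle $L_0$, a line bundle of degree $(q-1)(g_M-1)$, where $g_M\geq 2$ is the genus of $M$, so that $\chi_{top}(M(\C))=2-2g_M$. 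By construction $h(s)=0$ iff the period-$f$ Higgs--de Rham flow degenerates at $s$, equivalently the $\tau$-isocrystal of $\sA_s$ has a slope jump; since the $\mu$-ordinary locus is open and dense (Wedhorn), $h\not\equiv 0$. I expect this comparison step --- identifying the flow with the crystalline Frobenius and pinning down the relevant line bundle --- to be the main obstacle.

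Granting this, (i) and (ii) follow quickly. Because $\Theta_{unif}$ has generic rank one, a degeneration of the flow forces the $\tau$-isocrystal to pass from slopes $\{0,1\}$ to slopes $\{\tfrac12,\tfrac12\}$ and to nothing else; group-theoretically, the Kottwitz set $B(G_{\Q},\mu)$ for $G_\Q=\mathrm{Res}_{F/\Q}D^\times$ at the good prime $\mathfrak p$ has exactly two elements, the $\mu$-ordinary and the basic one. Hence there are exactly two Newton strata: the open $\mu$-ordinary stratum, and the closed stratum $\sN_p=\{h=0\}$, which is finite and, since $\deg\big(\mathrm{Frob}_q^{*}(K_{\sM}^{1/2})\otimes K_{\sM}^{-1/2}\otimes L_0\big)=(q-1)(g_M-1)>0$ forces $h$ to vanish, nonempty; this is (i). For (ii): the Higgs bundle $(E_{unif},\theta_{unif})$, its periodicity, and therefore $h$ depend only on the curve $M$ and the prime $\mathfrak p$, not on the auxiliary symplectic representation $G_\Q\to\mathrm{GSp}_{2n}$; for each such representation the comparison above identifies the corresponding Newton jumping locus with the same set $\{h=0\}$, so $\sN_p$ is independent of the representation.

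Finally, for the mass formula (iii) one counts $\{h=0\}$. Twisting by the torsion line bundle $L_0$ does not change degrees, so $\deg\mathrm{div}(h)=(q-1)(g_M-1)=(1-q)\tfrac{\chi_{top}(M(\C))}{2}$, and it remains to show that $h$ has simple zeros --- the analogue of the classical fact that the Hasse invariant has simple zeros at supersingular points. I would verify this by a local Serre--Tate/Dieudonn\'e computation at a point of $\sN_p$: the local deformation space is one-dimensional and the flow degenerates there to first order, using the oddness of $\mathfrak p$ (which excludes the residue characteristic $2$ pathologies) and, for the statement phrased with honest cardinalities rather than a stacky mass, the absence of extra automorphisms afforded by the level structure. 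Then $|\sN_p|=\deg\mathrm{div}(h)=(1-q)\chi_{top}(M(\C))/2$, which is Conjecture~1.3 of \cite{SZZ}.
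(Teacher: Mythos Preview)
Your outline captures the Hasse--Witt construction correctly and the degree count for (iii) is right, but your route to (i) and especially (ii) diverges substantially from the paper's and has a real gap.

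The paper's central tool, which you do not invoke at all, is the \emph{one clump theorem} for \'etale correspondences without a core (Theorem~\ref{oneclump}). The paper constructs an auxiliary Hecke correspondence $T_\lambda$ on $\sM_0$ (Definition~\ref{hecke}, Lemma~\ref{tlambda_nocore}), shows it has no core, and then argues as follows: the Hasse--Witt section $HW(\sM_0)$ is an \emph{invariant} pluricanonical form for $T_\lambda$ (because the uniformizing Higgs bundle and the inverse Cartier transform are compatible with \'etale pullback), so $\mathrm{div}(HW)$ is a clump; the Newton jumping locus is also a clump (because the two pulled-back abelian schemes are isogenous, Lemma~\ref{heckepullbackisogenous}); by the one-clump theorem these coincide. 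Independence of symplectic representation (ii) then follows because the Hecke correspondence itself is independent of the embedding (Kisin \cite[2.3.8(ii)]{Ki}), so its unique clump is too. For (i), the paper gets nonemptiness of $\sN_p$ from Wortmann's $\mu$-ordinary density plus quasi-affineness of Ekedahl--Oort strata, and uniqueness of the non-generic Newton polygon again from the one-clump theorem.

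Your direct crystalline approach runs into trouble at (ii). Your identification of $\{h=0\}$ with the Newton jumping locus relies on decomposing the Dieudonn\'e crystal of $\sA$ under an $\sO_F$-action and singling out the $\tau$-component. But for a \emph{general} Hodge-type symplectic embedding $G_\Q\hookrightarrow \mathrm{GSp}_{2n}$, the resulting abelian scheme need not carry an $\sO_F$-action by genuine endomorphisms; that structure is specific to the PEL mod\`ele \'etrange. So your sentence ``for each such representation the comparison above identifies the corresponding Newton jumping locus with the same set $\{h=0\}$'' is not justified: the comparison you sketched is only available for the one representation you began with. The paper's clump argument sidesteps this entirely, and the Remark following Theorem~\ref{main_result} notes an alternative via Zhang's result on Ekedahl--Oort strata, but neither is what you wrote. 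A second, smaller point: you take $f$ to be ``a period'' and then use $q=p^f$; the paper proves (Theorem~\ref{theorem on periodicity of the rank two Higgs bundle}, via \cite{SZZ}) that the period is \emph{exactly} $[F_{\mathfrak p}:\Q_p]$, which is needed for the degree to come out to $(q-1)(g-1)$. Your simple-zeros argument via local Serre--Tate is reasonable in spirit and matches what the paper cites from \cite[Proposition~5.3]{SZZ} (an explicit display computation), but it is not free.
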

	\begin{remark}
		There has been much recent work on the Ekedahl-Oort strata on the good reduction of Shimura varieties of Hodge type, in particular by Goldring-Koskivirta and Zhang \cite{Gold,Zhang}. Their work yields another approach to parts of Theorem \ref{main_result}, but the techniques are rather different. We briefly discuss this circle of ideas.
		
		Many generalized Hasse invariants for the good reduction of Hodge-type Shimura varieties have been recently constructed in \cite{Gold} by constructing Hasse invariants on the stack of $G$-zips. These Hasse invariants cut out the (reduced) Ekedahl-Oort strata and are ``automatically" Hecke-equivariant. The stack of $G$-zips is analogous to a period domain. Call their Hasse invariant $H^{GK}$. Zhang proves that the open Ekedahl-Oort stratum is independent of the choice of symplectic embedding realizing $(G,X)$ as a Shimura datum of Hodge-type \cite[Theorem 0.1]{Zhang}, which implies Theorem \ref{main_result} (ii). Hence the dependence of $H^{GK}$ on the choice of symplectic embedding is minimal and in particular the support of $\mathrm{div}(H^{GK})$ is independent of the symplectic embedding. On the other hand, it is not clear that $\mathrm{div}(H^{GK})$ is multiplicity-free.  
	\end{remark}

	We conclude the introduction with the following 
	\begin{conjecture}[Polystability conjecture]\label{polystability}
		Any periodic Higgs bundle over a smooth projective complex curve $C$ is polystable.
	\end{conjecture}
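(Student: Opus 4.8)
\emph{A proposed approach.} I should stress at the outset that Conjecture \ref{polystability} is open --- by Theorem \ref{classification} it already contains Serre's conjecture on supersingular primes in the elliptic curve case --- so what follows is a plan rather than a proof. I would split the statement into two parts: (a) every periodic Higgs bundle $(E,\theta)$ over $C$ is semistable of degree $0$; and (b) every semistable periodic Higgs bundle is polystable, equivalently the category of periodic Higgs bundles is semisimple. Part (a) looks within reach of current methods; part (b) is where I expect the real obstacle to lie.

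For (a) I would work in characteristic $p$. Fix a spreading-out $(\sC,\sE,\Theta)\to S$ realizing periodicity with bounded period, and for a geometric point $s\in S-Z$ consider the resulting periodic Higgs--de Rham flow returning to $(\sE_s,\Theta_s)$ after $f$ steps. The degree assertion I would deduce from the rank one case: passing to determinants, $(\det E,\tr\theta)$ is again periodic, and by Corollary \ref{direct sum of line bundles} a periodic line bundle is torsion, hence of degree $0$. For semistability, I would argue by contradiction: if $(E,\theta)$ had a Higgs subsheaf of positive slope, then after spreading out and reducing mod $p$ the Harder--Narasimhan filtration is defined over a dense open of $S$, and I would invoke Langer-type boundedness for the Higgs--de Rham flow together with the fact that the inverse Cartier transform and the grading functor do not decrease the maximal destabilizing slope once it is large compared with $p$; this would force the slopes along the iterated flow to grow, contradicting periodicity of bounded period. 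This is in the same spirit as the semistability inputs behind the Higgs periodicity theorem (\cite[Theorem 1.3]{LS}, \cite[Theorem 5.6]{KS}).

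For (b), suppose $(E,\theta)$ is periodic and semistable of degree $0$ but not polystable. I would like to produce a nonsplit short exact sequence $0\to(E',\theta')\to(E,\theta)\to(E'',\theta'')\to0$ with both ends periodic --- already here one must know, or assume, that the category of periodic Higgs bundles is stable under sub- and quotient objects, which is part of the same circle of questions --- and then contradict nonsplitness. The cleanest, conditional, way to finish would be to invoke the expected implication ``periodic $\Rightarrow$ motivic'' (Remark \ref{remark on modular embedding}, Theorem \ref{intro_geo_modular}): a motivic Higgs bundle is the associated graded of a direct factor of a polarizable complex variation of Hodge structure, and by Deligne's semisimplicity theorem such a variation, hence its associated graded Higgs bundle, is polystable, so the extension splits. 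Unconditionally, I would instead try to run a weight/purity argument: attach to the mod $p$ reductions their companion $\overline{\Q}_\ell$-local systems, show that a periodic extension is pure of weight $0$, and conclude splitting from Deligne--Lafforgue--Drinfeld type purity.

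Carrying out this last step --- that is, proving the semisimplicity of the category of periodic Higgs bundles without first proving motivicity --- is the main obstacle, and I do not see how to do it in general. A realistic partial target would be Conjecture \ref{polystability} conditional on motivicity, or unconditionally for periodic Higgs bundles already known to be of geometric origin, such as the uniformizing Higgs bundle of a Shimura curve, where the conclusion follows from the semisimplicity of the monodromy of the associated polarizable variation of Hodge structure.
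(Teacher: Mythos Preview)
The statement is a \emph{conjecture}, and the paper does not prove it. You correctly flag this at the outset and present a plan rather than a proof, so there is no ``paper's proof'' to compare against. That said, two remarks on your plan are in order.

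Your part (a) is not new: the paper already records, just before Lemma \ref{structure of indec graded Higgs bundle}, that ``a periodic Higgs bundle must be graded and semistable of degree zero'' as an established fact (this is standard from the Higgs--de Rham flow formalism, cf.\ \cite{LSZ,KS}). So the entire content of Conjecture \ref{polystability} lies in your part (b), and your sketch of (a) via determinants and Langer-type slope estimates, while reasonable, is reproving something already in the literature.

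For part (b), your conditional route via motivicity is precisely what the paper itself observes: the sentence immediately following Conjecture \ref{polystability} says it is a consequence of the periodic Higgs conjecture (Conjecture \ref{motivicity conj}). Your proposed \emph{unconditional} route, however, has a genuine gap. You want to attach $\overline{\Q}_\ell$-companions to the mod-$p$ reductions of a periodic Higgs bundle and then run a purity argument. But producing such companions presupposes that the periodic Higgs bundle gives rise to an overconvergent $F$-isocrystal or a crystalline local system in the first place --- which is essentially the content of the periodic de Rham conjecture \cite[Conjecture 1.4]{KS}, itself a consequence of motivicity. So the ``unconditional'' path quietly assumes what the conditional path assumed, and does not bypass the main obstacle. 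Your honest final paragraph already concedes this; the upshot is that nothing in the proposal goes beyond what the paper states about the conjecture's status.
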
 
	Note that this conjecture is a consequence of our (much more ambitious) periodic Higgs conjecture, claiming that periodicity implies motivicity. As a matter of convention, a \emph{Hodge} filtration on a flat connection over $C$ means an effective finite decreasing Griffiths transverse filtration of subbundles whose associated graded Higgs bundle is semistable.

	\section{Elliptic curves}
	Let $C$ be a complex elliptic curve. In this section, we choose $K_C^{1/2}=\sO_C$ for convenience. A spreading-out $\sC\to S$ of $C$ yields a spreading-out $(\sO_{\sC}^{\oplus 2}, id)$ of $(E_{unif},\theta_{unif})$ over $S$. For a geometric point $s\in S$, we use the subscript $s$ to denote for the reduction of an object at $s$.

	\subsection{Non-periodicity}

	\begin{lemma}\label{criterion for uniformizing}
		Notation as above. $(\sO_{\sC_s}^{\oplus 2}, id)$ is non periodic at $s$ if and only if the bundle of $C^{-1}(\sO_{\sC_s}^{\oplus 2}, id)$ is isomorphic to $\sN_s$, the unique nontrivial extension of $\sO_{\sC_s}$ by $\sO_{\sC_s}$.
	\end{lemma}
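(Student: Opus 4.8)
The plan is to run the Higgs--de Rham flow of $(\sO_{\sC_s}^{\oplus 2},id)$ for one step and to decide periodicity from the isomorphism type of the bundle underlying $C^{-1}(\sO_{\sC_s}^{\oplus 2},id)$. Write $(E_0,\theta_0)=(\sO_{\sC_s}^{\oplus 2},id)$, graded of Hodge type $(1,0)+(0,1)$ with both graded pieces equal to $\sO_{\sC_s}$. I first observe that the flow stays within graded Higgs bundles of this type, so that every occurring Hodge filtration is a \emph{degree-zero sub-line-bundle}: the two graded pieces of such a filtration are line bundles of opposite degrees $d,-d$, and since $\Omega^1_{\sC_s}\cong\sO_{\sC_s}$, graded semistability (the low-degree piece is always Higgs-invariant; if the Higgs field is nonzero it is an injection of line bundles between the two pieces) forces $d=0$. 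Hence $(E_0,\theta_0)$ is periodic at $s$ if and only if some Higgs--de Rham flow returns to it, and I shall show this happens exactly when the first step already reproduces $(E_0,\theta_0)$ — which is governed by whether the bundle underlying $C^{-1}(E_0,\theta_0)$ splits.

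Next I analyze $(V,\nabla):=C^{-1}(E_0,\theta_0)$ as a flat bundle on $\sC_s$. It is indecomposable: $(E_0,\theta_0)$ is indecomposable as a Higgs bundle — its only $\theta_0$-invariant sub-line-bundle is one of the two copies of $\sO_{\sC_s}$, so it admits no $\theta_0$-invariant direct sum decomposition — and $C^{-1}$ is an equivalence of categories (Ogus--Vologodsky; one may assume $p$ large on $S-Z$, so $\rank=2$ is harmless). As a bundle $V$ has rank $2$, degree $0$ and, by the structure of the inverse Cartier transform of a graded Higgs bundle (conjugate filtration with graded pieces $F^*\sO_{\sC_s}=\sO_{\sC_s}$), sits in an exact sequence $0\to\sO_{\sC_s}\to V\to\sO_{\sC_s}\to 0$; therefore $V\cong\sO_{\sC_s}^{\oplus 2}$ or $V\cong\sN_s$. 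A further input: the $p$-curvature $\psi_\nabla$ is $F^*\theta_0$, which is nilpotent and — since $\theta_0\ne 0$ and $F^*$ is injective on global sections — nonzero; hence $\psi_\nabla$ has rank $1$, its kernel $W$ is a $\nabla$-stable saturated sub-line-bundle, and from $\psi_\nabla^2=0$ one gets an injection $V/W\hookrightarrow W\otimes F^*\Omega^1_{\sC_s^{(p)}}$. As $\deg F^*\Omega^1_{\sC_s^{(p)}}=0$ and $V$ is semistable, this forces $\deg W=0$.

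Suppose $V\cong\sN_s$. Then $\sN_s$ has a unique sub-line-bundle of degree $0$, the canonical $\iota(\sO_{\sC_s})$; so $W=\iota(\sO_{\sC_s})$, and since $W$ is $\nabla$-stable, $\nabla$ preserves $\iota(\sO_{\sC_s})$. By the first paragraph the Hodge filtration must be a degree-zero sub-line-bundle, hence $\mathrm{Fil}^1=\iota(\sO_{\sC_s})$; as $\nabla$ preserves it, $\mathrm{Gr}_{\mathrm{Fil}}(V,\nabla)=(\sO_{\sC_s}^{\oplus 2},0)$. From $(\sO_{\sC_s}^{\oplus 2},0)$ the flow is frozen, because $C^{-1}(\sO_{\sC_s}^{\oplus 2},0)=(\sO_{\sC_s}^{\oplus 2},d)$ and the trivial connection preserves every constant sub-line-bundle, so every later graded Higgs bundle is again $(\sO_{\sC_s}^{\oplus 2},0)\not\cong(E_0,\theta_0)$; thus $(E_0,\theta_0)$ is non-periodic at $s$. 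Suppose instead $V\cong\sO_{\sC_s}^{\oplus 2}$. In this trivialization $\nabla=d+A\omega$ with $\omega$ the invariant differential and $A\in\End(k(s)^2)$ constant; if $A$ were scalar $(V,\nabla)$ would split, contradicting indecomposability, so $A$ is non-scalar and has a non-eigenvector $v$. With $\mathrm{Fil}^1=\sO_{\sC_s}\cdot v$ the connection does not preserve $\mathrm{Fil}^1$, so the Higgs field of $\mathrm{Gr}_{\mathrm{Fil}}(V,\nabla)=(\sO_{\sC_s}\oplus\sO_{\sC_s},\theta_1)$ is a nonzero, hence invertible, map $\sO_{\sC_s}\to\sO_{\sC_s}$; thus $\mathrm{Gr}_{\mathrm{Fil}}(V,\nabla)\cong(\sO_{\sC_s}^{\oplus 2},id)=(E_0,\theta_0)$ and the flow is one-periodic. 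So $(E_0,\theta_0)$ is periodic at $s$, which gives the asserted equivalence.

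The main obstacle is the case $V\cong\sN_s$: showing that $\nabla$ must then preserve the unique degree-zero sub-line-bundle, i.e.\ that failure of periodicity is already visible on the underlying bundle of $C^{-1}(\sO_{\sC_s}^{\oplus 2},id)$. This rests on the fine structure of the inverse Cartier transform — its nonvanishing nilpotent $p$-curvature $F^*\theta_0$ and the resulting $\nabla$-stable degree-zero kernel — rather than on formal properties of the flow; and one must use that the Higgs--de Rham flow is confined to graded Higgs bundles of type $(1,0)+(0,1)$, which is precisely what rules out a spurious periodic flow routed through $(\sN_s,0)$.
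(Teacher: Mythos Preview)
Your approach mirrors the paper's: split on whether $V\cong\sN_s$ or $V\cong\sO_{\sC_s}^{\oplus 2}$, and in the latter case exhibit a non-$\nabla$-invariant copy of $\sO_{\sC_s}$ to obtain one-periodicity (you argue via indecomposability of $(V,\nabla)$, the paper via nonvanishing $p$-curvature, but the content is the same). Your $p$-curvature computation in the $\sN_s$ case is a genuine addition: it explains \emph{why} the unique degree-zero sub line bundle $\iota(\sO_{\sC_s})\subset\sN_s$ is $\nabla$-stable---as the kernel of the nonzero nilpotent $p$-curvature $F^*\theta_0$---and hence why the graded Higgs field from that filtration vanishes. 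The paper simply asserts $\Gr\cong(\sO_{\sC_s}^{\oplus 2},0)$ at that step without justification, so your argument fills this in.

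There is, however, a gap. Your opening claim that the flow is confined to graded Higgs bundles of type $(1,0)+(0,1)$, so that ``every occurring Hodge filtration is a degree-zero sub line bundle'', is not justified: the trivial filtration is also an admissible Hodge filtration, and the paper explicitly lists $Fil=\sN_s$ alongside $Fil=\sO_{\sC_s}$ as the two possibilities on $V$. With the trivial filtration one lands at $(\sN_s,0)$ after the first step, and one must then rule out any eventual return to $(\sO_{\sC_s}^{\oplus 2},id)$ from there. Your ``frozen'' argument treats only $(\sO_{\sC_s}^{\oplus 2},0)$. The paper closes the remaining case by noting that once the Higgs field vanishes, $C^{-1}$ is Frobenius pullback with the canonical connection, and the associated graded with respect to any Hodge filtration again lies in $\{(\sN_s,0),(\sO_{\sC_s}^{\oplus 2},0)\}$; hence the flow never returns to a Higgs bundle with nonzero Higgs field. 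Your final paragraph acknowledges the issue of a route through $(\sN_s,0)$ but does not actually dispatch it.
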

	\begin{proof}
		Let $V$ be the bundle of $C^{-1}(\sO_{\sC_s}^{\oplus 2}, id)$. Suppose that $V$ is isomorphic to $\sN_s$, the unique non-split extension of $\sO_{\sC_s}$ by $\sO_{\sC_s}$. Then possible Hodge filtrations are either $Fil=\sN_s$ or $Fil=\sO_{\sC_s}$, the unique sub line bundle of $\sN_s$ of degree zero. Thus, $\Gr \circ C^{-1}(\sO_{\sC_s}^{\oplus 2}, id)$ is isomorphic to either $(\sN_s,0)$ or $(\sO_{\sC_s}^{\oplus 2},0)$. As the Higgs field is zero in either case, the bundle of its inverse Cartier transform is simply its Frobenius pullback and the connection is the canonical connection. Therefore, the associated graded of its inverse Cartier transform with respect to \emph{any} Hodge filtration can only be isomorphic to one of these two possibilities. Consequently, it never flows back to $(\sO_{\sC_s}^{\oplus 2}, id)$. Thus, $C^{-1}(\sO_{\sC_s}^{\oplus 2}, id)$ is non periodic. Conversely, suppose the contrary, i.e., that $V$ is non-isomorphic to $\sN_s$. As $V$ is always an extension of $\sO_{\sC_s}$ by $\sO_{\sC_s}$ and there are only two such isomorphism classes of extensions, $V$ must be isomorphic to $\sO_{C_s}^{\oplus 2}$. Since the connection of $C^{-1}(\sO_{\sC_s}^{\oplus 2}, id)$ has nonzero $p$-curvature, there exists a sub line bundle which is isomorphic to $\sO_{C_s}$ and not invariant under the connection. Taking it as the Hodge filtration of $C^{-1}(\sO_{\sC_s}^{\oplus 2}, id)$, one obtains the one-periodicity for $(\sO_{C_s}^{\oplus 2}, id)$ in this case, which is a contradiction. The lemma is proved.
	\end{proof}
	Let $s\in S$ be a geometric point with residue field $k$, and let $\tilde s: W_2(k)\to S$ be a first-order thickening of $s$ inside $S$. Let $\sC_{\tilde s}$ be the restriction of $\sC$ over $\tilde s$, which is a $W_2(k)$-lifting of $\sC_s$. We write $C^{-1}_{\sC_s\subset \sC_{\tilde s}}(\sO_{\sC_s}^{\oplus 2}, id)$ for the inverse Cartier transform of $(\sO_{\sC_s}^{\oplus 2}, id)$, to emphasize its dependence on the choice of a $W_2$-lifting.  
	\begin{lemma}\label{structure of V}
		The bundle of $C^{-1}_{\sC_s\subset \sC_{\tilde s}}(\sO_{\sC_s}^{\oplus 2}, id)$ is isomorphic to $\sN_s$ if and only if the obstruction class of lifting the absolute Frobenius $F_{\sC_s}$ over $\sC_{\tilde s}$ is nonzero. 
	\end{lemma}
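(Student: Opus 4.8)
The plan is to unwind the Deligne--Illusie-style construction of the inverse Cartier transform $C^{-1}_{\sC_s\subset\sC_{\tilde s}}$ and to observe that, for $(\sO_{\sC_s}^{\oplus 2}, id)$, its underlying bundle is glued from $F_{\sC_s}^*\sO_{\sC_s}^{\oplus 2}=\sO_{\sC_s}^{\oplus 2}$ along a $1$-cocycle of unipotent matrices whose off-diagonal entry is \emph{exactly} the \v{C}ech cocycle computing the Frobenius-lifting obstruction. Concretely, choose an affine open cover $\{U_i\}$ of $\sC_s$ and, on each $\tilde U_i:=\sC_{\tilde s}\times_{\sC_s}U_i$, a lift $\tilde F_i$ of $F_{\sC_s}|_{U_i}$ (which exists by smoothness). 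In the construction recalled in \cite{LS,KS}, the restriction of $C^{-1}_{\sC_s\subset\sC_{\tilde s}}(\sO_{\sC_s}^{\oplus 2}, id)$ to $U_i$ is $F_{\sC_s}^*\sO_{U_i}^{\oplus 2}$ carrying a connection built from $\zeta_{\tilde F_i}:=\tfrac{1}{p}d\tilde F_i\colon F_{\sC_s}^*\Omega^1_{\sC_s}\to\Omega^1_{\sC_s}$ and $F_{\sC_s}^*\theta$, while over $U_{ij}$ the two local bundles are identified by the automorphism $\phi_{ij}=\exp\langle F_{\sC_s}^*\theta, D_{ij}\rangle$ of $F_{\sC_s}^*\sO_{\sC_s}^{\oplus 2}$, the exponential of the endomorphism obtained by contracting the $F_{\sC_s}^*\Omega^1_{\sC_s}$-component of $F_{\sC_s}^*\theta$ against $D_{ij}:=\zeta_{\tilde F_i}-\zeta_{\tilde F_j}\in\Gamma(U_{ij}, F_{\sC_s}^*T_{\sC_s})$. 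Since $\theta^2=0$, the $\phi_{ij}$ mutually commute and $(D_{ij})$ is additive, so $(\phi_{ij})$ is a genuine $1$-cocycle, and by definition the underlying bundle $V$ of $C^{-1}_{\sC_s\subset\sC_{\tilde s}}(\sO_{\sC_s}^{\oplus 2}, id)$ is $\{F_{\sC_s}^*\sO_{U_i}^{\oplus 2}\}$ glued along $(\phi_{ij})$.

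Now specialize to the elliptic curve $\sC_s$: fix a nowhere-vanishing $\omega\in\Gamma(\sC_s,\Omega^1_{\sC_s})$ with dual vector field $\partial_\omega$, trivializing $\Omega^1_{\sC_s}\cong\sO_{\sC_s}\cong T_{\sC_s}$ and hence their Frobenius pullbacks. Write $D_{ij}=g_{ij}\cdot F_{\sC_s}^*\partial_\omega$ with $g_{ij}:=\langle F_{\sC_s}^*\omega, D_{ij}\rangle\in\Gamma(U_{ij},\sO_{\sC_s})$. In the basis $F_{\sC_s}^*e_1, F_{\sC_s}^*e_2$ pulled back from the standard one, $\theta(e_2)=e_1\otimes\omega$ gives $\langle F_{\sC_s}^*\theta, D_{ij}\rangle=\left(\begin{smallmatrix}0&g_{ij}\\0&0\end{smallmatrix}\right)$, so $\phi_{ij}=\left(\begin{smallmatrix}1&g_{ij}\\0&1\end{smallmatrix}\right)$. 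Thus $V$ is an extension $0\to\sO_{\sC_s}\to V\to\sO_{\sC_s}\to 0$ whose class is the \v{C}ech class $[(g_{ij})]\in H^1(\sC_s,\sO_{\sC_s})$. Since $\dim_k H^1(\sC_s,\sO_{\sC_s})=1$, there is up to isomorphism a single nonsplit such extension, namely $\sN_s$; hence $V\cong\sN_s$ when $[(g_{ij})]\neq 0$ and $V\cong\sO_{\sC_s}^{\oplus 2}$ when $[(g_{ij})]=0$.

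It remains to identify $[(g_{ij})]$ with the Frobenius-lifting obstruction. Once the local lifts $\tilde F_i$ are chosen, standard deformation theory of morphisms says that the cochain $(D_{ij})=(\zeta_{\tilde F_i}-\zeta_{\tilde F_j})$ is a \v{C}ech $1$-cocycle with values in $F_{\sC_s}^*T_{\sC_s}$ whose cohomology class is precisely the obstruction to the existence of a global lift of $F_{\sC_s}$ over $\sC_{\tilde s}$: it vanishes exactly when the $\tilde F_i$ can be corrected by sections of $F_{\sC_s}^*T_{\sC_s}$ so as to agree on overlaps. Under the trivialization $F_{\sC_s}^*T_{\sC_s}\cong\sO_{\sC_s}$ above, this class is carried to $[(g_{ij})]$. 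Combining with the previous paragraph: the obstruction class is nonzero $\iff [(g_{ij})]\neq 0 \iff V\cong\sN_s$, which is the assertion.

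The one delicate point is the first step: invoking the correct normalization of $C^{-1}$ so that, over a trivialized bundle, the underlying module is reglued from $F_{\sC_s}^*E$ by the unipotent cocycle $\exp\langle F_{\sC_s}^*\theta, D_{ij}\rangle$, with the \emph{same} cochain $(D_{ij})$ that represents the Frobenius-lifting obstruction (up to sign, which is immaterial). Everything downstream is a one-dimensional $H^1$ computation on the elliptic curve. In effect this is the rank-two shadow of the Deligne--Illusie principle that characteristic-$p$ degeneration phenomena are governed by the obstruction to lifting Frobenius.
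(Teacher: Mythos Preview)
Your proof is correct and follows essentially the same approach as the paper: both invoke the exponential-twisting description of the inverse Cartier transform from \cite{LSZ0}, identify the underlying bundle $V$ as an extension of $\sO_{\sC_s}$ by $\sO_{\sC_s}$ whose class is obtained by contracting the Frobenius-lifting obstruction cocycle in $H^1(\sC_s,F^*T_{\sC_s})$ against $F^*\theta_{unif}$, and then observe that since $F^*\theta_{unif}$ is an isomorphism on the elliptic curve, the extension class vanishes iff the obstruction class does. Your version is simply more explicit about the matrix form of the gluing cocycle and the trivializations, while the paper packages the same computation as the image of $ob(F)\otimes F^*\theta_{unif}$ under the natural pairing $H^1(\sC_s,F^*T_{\sC_s})\otimes\Hom(F^*L_1,F^*L_2\otimes F^*K_{\sC_s})\to H^1(F^*L_1,F^*L_2)$.
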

	\begin{proof}
		We use the reinterpretation of the inverse Cartier transform via exponential twisting \cite{LSZ0}. Let $\theta_{unif}: L_1\to L_2\otimes K_{\sC_s}$ be the uniformizing Higgs field, where $L_1\cong L_2\cong K_{\sC_s}\cong \sO_{\sC_s}$, and let $\{h_{\alpha\beta}\}$ be a \v{C}ech representative of the obstruction class $ob(F)$ of lifting $F:=F_{\sC_s}$ (see \cite[\S2.1] {LSZ0}). Then by its very construction (see \cite[\S2.2]{LSZ0}), $V$ is the extension of $F^*L_1$ by $F^*L_2$, and the extension class has a \v{C}ech representative $h_{\alpha\beta}(F^*\theta_{unif})$. In other words, the extension class is the image of $ob(F)\otimes F^*\theta_{unif}$ under the natural morphism induced by $F^*T_{\sC_s}\otimes F^*K_{\sC_s}\to \sO_{\sC_s}$:$$H^1(\sC_s,F^*T_{\sC_s})\otimes\Hom(F^*L_1,F^*L_2\otimes F^*K_{\sC_s})\to H^1(F^*L_1,F^*L_2).$$ As $F^*\theta_{unif}$ is an isomorphism, it is (non)zero if and only if $ob(f)$ is (non)zero. The lemma follows.
	\end{proof}
	Notice the change of noations in the next lemma.
	\begin{lemma}\label{obstruction of Frob for supersingular}
		Let $k$ be an algebraically closed field of characteristic $p>0$. Let $C$ be a supersingular elliptic curve over $k$ and $\tilde C$ be a $W_2(k)$-lifting. Then the obstruction of lifting the absolute Frobenius $F_{C}$ of $C$ over $\tilde C$ is nonzero.
	\end{lemma}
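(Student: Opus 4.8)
The plan is to derive the asserted nonvanishing of the obstruction $ob(F)$ to lifting $F_C$ from the stronger statement that the relative Frobenius admits \emph{no} lift over $\tilde C$, and to prove the latter using the elliptic-curve group structure. (The fact is classical---essentially a fragment of Serre-Tate theory---but the argument below is self-contained modulo standard facts about abelian schemes.) First, a reduction: the projection $C^{(p)}\to C$ always lifts, namely to the base-change projection $\tilde C\times_{W_2(k),\sigma}W_2(k)\to\tilde C$ with $\sigma$ the Frobenius of $k$; hence lifting the absolute Frobenius $F_C$ over $\tilde C$ amounts to lifting the relative Frobenius $F:=F_{C/k}\colon C\to C^{(p)}$ to a $W_2(k)$-morphism $\tilde F\colon\tilde C\to\tilde C^{(p)}$, where $\tilde C^{(p)}:=\tilde C\times_{W_2(k),\sigma}W_2(k)$; and $ob(F)\in H^{1}(C,F^{*}T_{C^{(p)}/k})$ is exactly the obstruction to the existence of $\tilde F$. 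So it is enough to show that when $C$ is supersingular no such $\tilde F$ exists.

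Suppose $\tilde F$ exists. Lifting the origin of $C$ along $\tilde C\to C$ (possible since $\tilde C$ is smooth over $W_2(k)$) and translating in the group $\tilde C^{(p)}$, we may assume $\tilde F$ sends origin to origin; the rigidity lemma over the connected base $\Spec W_2(k)$ then forces $\tilde F$ to be a homomorphism of abelian schemes. A routine argument shows $\tilde F$ is then an isogeny of degree $p$ (it is proper and quasi-finite, hence finite, and then finite flat of degree $p$ by miracle flatness). Let $\tilde F^{\vee}\colon\tilde C^{(p)}\to\tilde C$ be the dual isogeny, so $\tilde F^{\vee}\circ\tilde F=[p]_{\tilde C}$; since forming the dual isogeny commutes with base change, $\tilde F^{\vee}$ reduces modulo $p$ to the dual isogeny $F^{\vee}$ of $F$.

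The core is then a short computation on the module of invariant differentials $\omega_{\tilde C}:=e^{*}\Omega^{1}_{\tilde C/W_2(k)}$ ($e$ the identity section), a free $W_2(k)$-module of rank one. On one hand $[p]_{\tilde C}$ acts as multiplication by $p$ on $\mathrm{Lie}(\tilde C)$, hence on its dual $\omega_{\tilde C}$ as well, so $[p]_{\tilde C}^{*}=p\cdot\id$ on $\omega_{\tilde C}$. On the other hand $\tilde F^{*}\colon\omega_{\tilde C^{(p)}}\to\omega_{\tilde C}$ reduces modulo $p$ to $F^{*}=0$ (the relative Frobenius kills K\"ahler differentials), so $\tilde F^{*}(\omega_{\tilde C^{(p)}})\subseteq p\,\omega_{\tilde C}$; and here is where supersingularity is used: $\ker F\cong\alpha_p$ is infinitesimal and self-Cartier-dual, so $\ker F^{\vee}\cong(\ker F)^{\vee}\cong\alpha_p$ is infinitesimal as well, whence $F^{\vee}$ also kills K\"ahler differentials and $(\tilde F^{\vee})^{*}(\omega_{\tilde C})\subseteq p\,\omega_{\tilde C^{(p)}}$. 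Composing the last two inclusions, $[p]_{\tilde C}^{*}=(\tilde F^{\vee}\circ\tilde F)^{*}=\tilde F^{*}\circ(\tilde F^{\vee})^{*}$ carries $\omega_{\tilde C}$ into $p\cdot(p\,\omega_{\tilde C})=p^{2}\,\omega_{\tilde C}=0$; thus multiplication by $p$ vanishes on $\omega_{\tilde C}\cong W_2(k)$, i.e. $p=0$ in $W_2(k)$---absurd. Hence no $\tilde F$ exists and $ob(F)\neq 0$.

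The step I expect to need the most care is pinning down where supersingularity enters, because the argument above must---and does---fail for an \emph{ordinary} curve: there $\ker F^{\vee}\cong\Z/p$ is \'etale, so $(\tilde F^{\vee})^{*}$ is an \emph{isomorphism} of rank-one free $W_2(k)$-modules and no contradiction results, consistent with the Serre-Tate canonical lift, over which Frobenius does lift. (A complementary viewpoint: $\tilde C\mapsto ob(F)$ is an affine map of $H^{1}(C,\sO_{C})$-torsors whose linear part is the Hasse-Witt operator, which vanishes in the supersingular case; but this only shows $ob(F)$ is independent of $\tilde C$, and one still needs an argument such as the above to see that the common value is nonzero.) The subsidiary inputs---rigidity, that a lift of $F$ is automatically a degree-$p$ isogeny, and the Cartier-duality identification $\ker F^{\vee}\cong(\ker F)^{\vee}$---are standard but worth invoking precisely.
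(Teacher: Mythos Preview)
Your argument is correct and shares its opening moves with the paper's proof: both assume a lift $\tilde F$ exists, translate so that it preserves the origin, and invoke rigidity to make $\tilde F$ a homomorphism of abelian schemes over $W_2(k)$. The divergence is in how the contradiction is extracted. The paper simply observes that $\ker(\tilde F)$ would then be a finite flat $W_2(k)$-group scheme lifting $\ker(F)\cong\alpha_p$, and declares this impossible. You instead pass to the dual isogeny $\tilde F^{\vee}$, use that both $F$ and $F^{\vee}$ kill differentials in the supersingular case (via $\ker F^{\vee}\cong(\ker F)^{\vee}\cong\alpha_p$), and conclude that $[p]^{*}=\tilde F^{*}\circ(\tilde F^{\vee})^{*}$ annihilates $\omega_{\tilde C}\cong W_2(k)$, contradicting $[p]^{*}=p\cdot\id$.

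What each buys: the paper's route is a one-liner once you grant that $\alpha_p$ does not lift to $W_2(k)$, but that fact is left implicit and would itself require an argument (e.g.\ via Tate--Oort or, indeed, via a Lie-algebra computation much like yours). Your route is longer but self-contained, and it makes transparent exactly where supersingularity enters and why the argument collapses in the ordinary case---a point you already flag. In effect, your differential computation \emph{is} a proof of the non-liftability of $\alpha_p$ (at least as embedded in an elliptic curve), so the two arguments are close cousins; yours unpacks what the paper leaves as a black box. Your care in distinguishing absolute from relative Frobenius is also an improvement in precision over the paper, which conflates the two slightly.
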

	\begin{proof}
		Assume there is a $W_2$-morphism $\tilde F: \tilde C\to \tilde C$ lifting $F_C$. Let $0$ be the identity of $\tilde C$. Then $\tilde F(0)$ modulo $p$ is the identity of $C$. So $\tilde F':=\tilde F-\tilde F(0)$ is another lifting of $F$, which is a group homomorphism by \cite[Corollary 6.4]{MFK}. It follows that $\ker(\tilde F')$ is a $W_2$-lifting of the group scheme $\alpha_p=\ker(F_C)$, which is however impossible.   
	\end{proof}
	\begin{proposition}\label{nonperiodicity for unif}
		Let $C$ be a complex elliptic curve. Suppose that $C$ has infinitely many primes of supersingular reduction, then $(\sO_C^{\oplus 2},id)$ is non periodic.  
	\end{proposition}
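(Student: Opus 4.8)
The plan is to argue by contradiction, feeding a supersingular reduction of sufficiently large residue characteristic through the chain Lemma \ref{obstruction of Frob for supersingular} $\Rightarrow$ Lemma \ref{structure of V} $\Rightarrow$ Lemma \ref{criterion for uniformizing} in order to manufacture a reduction that is \emph{not} periodic. So suppose $(\sO_C^{\oplus 2},id)$ is periodic. By \cite[Lemma 4.8]{KS} this is independent of the chosen spreading-out, so we fix a convenient one. If the $j$-invariant $j_0$ of $C$ is algebraic, take $S=\Spec\sO_K[1/N]$ for a number field $K$ over which $C$ is defined. If $j_0$ is transcendental, spread out the standard Weierstrass model of the elliptic curve with $j$-invariant $j_0$ over $\Q(j_0)$, obtaining an integral affine $S$ of relative dimension one over $\Spec\Z[1/N]$ for some $N$ (over $\C$, $C$ is isomorphic to the base change of this standard model, there being no nontrivial twists, so this is indeed a spreading-out of $(C,\sO_C^{\oplus 2},id)$). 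In either case, after inverting finitely many elements, $S$ is smooth over $\Spec\Z[1/N]$; let $\sC\to S$, the proper closed $Z\subsetneq S$, the integer $f$, and the $j$-invariant morphism $j_\sC\colon S\to\A^1_{\Z}$ be as in the definition of periodicity.

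The heart of the matter is the following. Let $s\in S-Z$ be a geometric point of residue characteristic $p\nmid N$ such that $\sC_s$ is a supersingular elliptic curve, and let $\tilde s\colon\Spec W_2(k(s))\to S$ be a lifting of $s$ inside $S$ (which exists since $S$ is smooth over $\Z$ away from $N$); then $\sC_{\tilde s}$ is a $W_2(k(s))$-lifting of $\sC_s$. By Lemma \ref{obstruction of Frob for supersingular} the obstruction to lifting $F_{\sC_s}$ over $\sC_{\tilde s}$ is nonzero; by Lemma \ref{structure of V} the underlying bundle of $C^{-1}_{\sC_s\subset\sC_{\tilde s}}(\sO_{\sC_s}^{\oplus 2},id)$ is $\sN_s$; and by Lemma \ref{criterion for uniformizing} the Higgs bundle $(\sO_{\sC_s}^{\oplus 2},id)$ is non periodic at $s$ with respect to $\tilde s$. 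Since $s\notin Z$, this contradicts the defining property of periodicity outright. Thus the whole proof reduces to exhibiting one such geometric point $s$ outside $Z$, of residue characteristic prime to $N$.

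This last step is where the hypothesis is used and where the genuine obstacle lies: $Z$ may dominate $\Spec\Z$ with positive-dimensional fibers, so one cannot simply ``choose a good prime,'' and one must instead play the growth of the supersingular locus against the bounded complexity of $Z$ modulo $p$. The supersingular locus $\Sigma\subseteq S$ is, fibrewise over $\overline{\F}_p$, the $j_\sC$-preimage of the set of supersingular $j$-invariants in $\overline{\F}_p$, whose cardinality is $\sim p/12$ and tends to infinity. In the algebraic case $S$ is a curve, hence $Z$ is a finite set of closed points, and the hypothesis provides a supersingular prime — hence a supersingular geometric point of $S$ — not lying below $Z$. In the transcendental case $j_\sC$ is dominant while $\dim Z<\dim S=2$, so $\overline{j_\sC(Z)}$ is a proper closed subset of $\A^1_{\Z}$; therefore for $p\gg 0$ the set $j_\sC(Z\cap S_{\overline{\F}_p})$ is finite of cardinality bounded independently of $p$, whereas $j_\sC(\Sigma\cap S_{\overline{\F}_p})$ contains all but boundedly many of the $\sim p/12$ supersingular $j$-invariants modulo $p$. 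Comparing cardinalities forces $\Sigma\cap S_{\overline{\F}_p}\not\subseteq Z$ for all sufficiently large $p$; and since $C$ has supersingular reductions of arbitrarily large residue characteristic, such $p$ (prime to $N$) exist, furnishing the required $s$ and completing the contradiction.
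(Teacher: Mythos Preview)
Your proof is correct and follows the same route as the paper---applying Lemmata \ref{obstruction of Frob for supersingular}, \ref{structure of V}, and \ref{criterion for uniformizing} at a supersingular reduction---but the paper's proof is the single sentence ``Combine Lemmata \ref{criterion for uniformizing}--\ref{obstruction of Frob for supersingular}'' and leaves implicit the one point you carefully address: that a supersingular geometric point of $S$ can be found outside the exceptional locus $Z$. Your case split (algebraic versus transcendental $j$-invariant) and the counting argument comparing $|Z_{\overline{\F}_p}|$ against the $\sim p/12$ supersingular $j$-invariants correctly fill in this detail.
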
 
	\begin{proof}
		Combine Lemmata \ref{criterion for uniformizing}-\ref{obstruction of Frob for supersingular}.
	\end{proof}
	
	\subsection{Periodic Higgs bundles over elliptic curves}
	We start with the classification of rank one periodic Higgs bundles over an \emph{arbitrary} complex smooth projective curve.
	\begin{proposition}\label{periodic_line_bundles}
		Let $C$ be a complex smooth projective curve and $(L,\theta)$ a Higgs line bundle over $C$. Then $(L,\theta)$ is periodic if and only if $\theta=0$ and $L$ is torsion. 
	\end{proposition}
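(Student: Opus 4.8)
The plan is to prove both implications separately, with the ``if'' direction being essentially the Higgs analog of the Chudnovsky–Chudnovsky theorem and the ``only if'' direction using a degree-of-Frobenius-pullback argument together with the structure of the inverse Cartier transform on line bundles.

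For the ``if'' direction, suppose $\theta = 0$ and $L$ is torsion, say $L^{\otimes n} \cong \sO_C$. First I would pick a spreading-out $(\sC, \sL) \to S$ so that $\sL_s^{\otimes n} \cong \sO_{\sC_s}$ at every geometric point of an open subscheme. Since the Higgs field is zero, the inverse Cartier transform of $(\sL_s, 0)$ with respect to any $W_2$-lifting is just the Frobenius pullback $F_{\sC_s}^* \sL_s$ equipped with its canonical connection, which has zero $p$-curvature; the only Griffiths-transverse (Hodge) filtration is the trivial one, so $\Gr \circ C^{-1}(\sL_s, 0) \cong (F_{\sC_s}^* \sL_s, 0)$. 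Thus one-step flow sends $(\sL_s,0)$ to $(\sL_s^{\otimes p}, 0)$, and $f$-step flow sends it to $(\sL_s^{\otimes p^f}, 0)$. Because $\sL_s$ is $n$-torsion, the set $\{ \sL_s^{\otimes p^k} : k \geq 0\}$ lies among the $n$-torsion line bundles, a finite set whose cardinality I can bound uniformly (at worst by $n^2$, independently of $s$); hence there is a uniform $f$ with $\sL_s^{\otimes p^f} \cong \sL_s$, i.e. $(\sL_s, 0)$ is periodic of period $\leq f$ at every $s$ outside a proper closed $Z$. This gives periodicity over $C$.

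For the ``only if'' direction, suppose $(L,\theta)$ is periodic. I would first argue $\theta = 0$: if $\theta \neq 0$, then since $L$ and $L \otimes K_C$ are line bundles, $\theta$ being a nonzero map $L \to L \otimes K_C$ forces $\deg K_C \geq 0$ and in fact exhibits a global section of $K_C$; but more to the point, the grading operation on a rank-one Higgs bundle keeps it rank one, and the inverse Cartier transform of a rank-one Higgs bundle with nonzero Higgs field produces a connection whose associated graded (for the only available filtration) returns a Higgs field that is a Frobenius-pullback-twisted version of $\theta$ — tracking degrees as in Lemma \ref{structure of V}, the bundle degree is multiplied by $p$ at each step while the Higgs field's ``room'' $K_C$ is fixed, which is incompatible with periodicity unless $\theta = 0$ (on a curve of genus $\geq 1$ this can be made precise; one should be careful in the genus-$1$, $\theta$ a nonzero scalar case, but there $L$ must then be degree $0$ and the flow still fails to return). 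Once $\theta = 0$, periodicity of $(L,0)$ means $\sL_s^{\otimes p^{f}} \cong \sL_s$ for all $s$ outside $Z$ and all $W_2$-liftings, i.e. $\sL_s^{\otimes(p^f - 1)} \cong \sO_{\sC_s}$ for a dense set of closed points $s$ of arbitrarily large residue characteristic. I then invoke a Chudnovsky–Chudnovsky / Grothendieck–Katz type statement for $H^1$: a line bundle $L$ on $C$ over $\C$ all of whose reductions are torsion of bounded-by-$(p^f-1)$ order must itself be torsion — concretely, pass to the Jacobian, note $L$ corresponds to a point of $\mathrm{Pic}^0(C)$ whose reductions are torsion, and conclude the point is torsion (this is exactly the $p$-curvature conjecture for abelian connections of rank one, which is a theorem).

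The main obstacle I expect is making the ``$\theta = 0$'' step fully rigorous in a uniform way across all reductions, and more substantially, citing or proving the precise arithmetic input for the torsion conclusion: one needs that infinitely many torsion reductions (of controlled order) force global torsion, which for curves of genus $\geq 1$ is the rank-one case of Grothendieck–Katz, available through Chudnovsky–Chudnovsky as the paper itself notes in the discussion following Corollary \ref{direct sum of line bundles}. Assembling these, together with the observation that periodicity is insensitive to the choice of spreading-out (\cite[Lemma 4.8]{KS}), completes the proof.
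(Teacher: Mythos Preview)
Your ``if'' direction is essentially the paper's argument; the paper streamlines the period bound by observing directly that $\sL_s^{\otimes p^{\phi(N)}}\cong \sL_s$ once $p\nmid N$, but your finite-orbit reasoning works equally well.

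There are two genuine gaps on the ``only if'' side.

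First, your argument for $\theta=0$ is confused: you attempt to run the inverse Cartier transform on a rank-one Higgs bundle with $\theta\neq 0$, but the inverse Cartier transform of Ogus--Vologodsky is only defined for \emph{nilpotent} Higgs fields, and a nonzero Higgs field on a line bundle is never nilpotent. The paper's argument is a single sentence: periodicity (by its very definition via Higgs--de Rham flow) forces the Higgs field to be nilpotent, and a nilpotent endomorphism of a rank-one object is zero.

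Second, and more substantially, the arithmetic input you invoke is not the right one. The statement you need is: if a degree-zero line bundle $L$ on $C/\C$ has reductions $\sL_s$ that are $(p^{M}-1)$-torsion at all geometric closed points $s$ of some spreading-out, then $L$ is torsion. You identify this with the rank-one Grothendieck--Katz conjecture (Chudnovsky--Chudnovsky), but that theorem is a statement about \emph{flat connections} with vanishing $p$-curvature, not about line bundles with no differential structure at all; the paper itself calls the present result a Higgs \emph{analog} of Chudnovsky--Chudnovsky, not a consequence. The paper's proof instead passes to the Jacobian and applies two deep results about points on abelian schemes: Pink's theorem \cite[Theorem 5.3]{Pink} shows that the hypothesis forces $[L]$ to specialize to a torsion point at every $\bar{\Q}$-point $v\in S$, and then Masser's specialization theorem \cite{Ma89} lifts this to conclude that $[L]$ itself is torsion over $\C$. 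The two-step structure (Pink to get torsion over number fields, Masser to get torsion over $\C$) is essential because $C$ need not be defined over $\bar{\Q}$, and neither step is supplied by the $p$-curvature literature you cite.
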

	\begin{proof}
		For a line bundle $L$, we take a spreading-out $(\sC,\sL)$ of $(C,L)$ over $S$. Suppose first $L$ to be torsion, say of order $N\in \N$. Shrinking $S$ if necessary, we may assume the characteristics of residue fields of geometric points of $S$ are coprime to $N$. Then for an $s\in S(k)$ of characteristic $p$, one has 
		$$
		\sL_s^{\otimes p^{\phi(N)}}\cong \sL_s.
		$$   
		Therefore, $(L,0)$ is periodic.
		
		As a periodic Higgs bundle has nilpotent Higgs field, the Higgs field of a rank one periodic Higgs bundle must vanish. Shrinking $S$ if necessary, we may assume $(\sL_s,0)$ is periodic for any geometric point $s\in S$, i.e., there exists an $M\in \N$, independent of $s$, such that $\sL_s^{\otimes p^{M}-1}$ is trivial. Regard $[L]$ as an element in $\mathrm{Pic}^0(\sC/S)(k(s))$, i.e., a $k(s)$ point of the relative Picard scheme $\mathrm{Pic}^0(\sC/S)$. Thanks to a deep result of Pink \cite[Theorem 5.3]{Pink}, it follows that, for any $v\in S(\bar \Q)$, the specialization of $[L]$ at $v$, which is an element in $\mathrm{Pic}^0(\sC/S)(k(v))$, is torsion, Finally, applying the main theorem of \cite{Ma89} due to Masser, $[L]\in \mathrm{Pic}^0(\sC/S)(k(s))$ is torsion. In particular, $L$ is torsion over $C$. This concludes the proof.
	\end{proof}
	\begin{corollary}\label{direct sum of line bundles}
		Let $C$ be a complex smooth projective curve. Then a direct sum of line bundles $(\oplus_iL_i,0)$ is periodic if and only if each $L_i$ is torsion. 
	\end{corollary}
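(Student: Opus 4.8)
The plan is to reduce the statement to Proposition~\ref{periodic_line_bundles} by exploiting two stability properties of periodicity: that it is preserved under finite direct sums (this handles the ``if'' direction) and under passage to direct summands (this handles the ``only if'' direction and is the substantive point).

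For the ``if'' direction I would argue as follows. If each $L_i$ is torsion, then each $(L_i,0)$ is periodic by Proposition~\ref{periodic_line_bundles}. Fix a common spreading-out $(\sC,\bigoplus_i\sL_i)\to S$, and for each $i$ a proper closed $Z_i\subset S$ and a period $f_i$ valid at all geometric points of $S-Z_i$ and for every $W_2$-lifting; after replacing $f_i$ by $f:=\mathrm{lcm}_i(f_i)$ and concatenating each length-$f_i$ Higgs--de Rham flow with itself $f/f_i$ times, one may take a common period $f$. Since the inverse Cartier transform is additive, the grading functor commutes with finite direct sums, a direct sum of Griffiths-transverse filtrations is Griffiths transverse, and a direct sum of semistable slope-$0$ Higgs bundles is again semistable of slope $0$, the termwise direct sum of these flows is an $f$-periodic Higgs--de Rham flow of $(\bigoplus_i\sL_{i,s},0)$ over $S-\bigcup_iZ_i$. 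Hence $(\bigoplus_iL_i,0)$ is periodic.

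For the ``only if'' direction, suppose $(\bigoplus_iL_i,0)$ is periodic. First I would record two reductions: a periodic Higgs bundle is the associated graded of a Hodge filtration, hence semistable, and applying $C^{-1}$ multiplies the degree by $p$ while grading preserves it, so a periodic Higgs bundle has degree $0$; for a direct sum of line bundles with vanishing Higgs field this forces $\deg L_i=0$ for all $i$. After combining isomorphic factors and deleting a further proper closed subscheme of $S$ I may assume the $\sL_{i,s}$ pairwise non-isomorphic, so that $C^{-1}(\bigoplus_j\sL_{j,s},0)=\bigoplus_jC^{-1}(\sL_{j,s},0)$ is a direct sum of pairwise non-isomorphic flat line bundles with commutative endomorphism algebra. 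The plan is then to propagate the projection idempotents $e_j\in\End(\bigoplus_kL_k,0)$ along the flow via functoriality of $C^{-1}$, replacing at each stage the chosen Hodge filtration $Fil$ by the ``block-diagonal'' filtration $\bigoplus_j\bigl(Fil\cap C^{-1}(j\text{-th block})\bigr)$: this remains Griffiths transverse, and---here is where $\deg L_j=0$ is essential---its graded, being a direct sum of slope-$0$ subsheaves of the old (semistable slope-$0$) graded, is again semistable of slope $0$. This should exhibit a block-diagonal periodic flow whose $i$-th block is a periodic Higgs--de Rham flow of $(\sL_{i,s},0)$; then $(L_i,0)$ is periodic and Proposition~\ref{periodic_line_bundles} forces $L_i$ torsion.

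The hard part will be the ``closing up'': after modifying the filtrations one must still check that the new flow returns to $(\bigoplus_iL_i,0)$, equivalently that the return isomorphism can be chosen block-diagonal. Stated invariantly, what is really needed is that the category of periodic Higgs bundles over $C$ is idempotent-complete---that a direct summand of a periodic Higgs bundle all of whose indecomposable factors have degree $0$ is again periodic. I expect this to be the only non-formal ingredient; it can be isolated as such a lemma and proved along the indicated lines, or deduced from the Tannakian formalism attached to periodic Higgs--de Rham flows in the spirit of \cite{LSZ0}. A subsidiary, routine point is to ensure that the finitely many genericity conditions used remove only proper closed subsets of $S$, which is harmless since the definition of periodicity permits shrinking $S$ by any proper closed subscheme.
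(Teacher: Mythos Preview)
Your strategy is genuinely different from the paper's, and the difference is instructive.

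The paper does \emph{not} try to block-diagonalize the flow. Instead it fixes one factor, say $(\sL_{1,s},0)$, and simply tracks it as a Higgs \emph{subobject} through the given flow, without touching the filtrations at all. The point is that $C^{-1}$ is functorial, so $(F^*\sL_{1,s},\nabla_{can})$ is a flat sub of $C^{-1}(\oplus_i\sL_{i,s},0)$; being a line bundle, its intersection with any Hodge filtration is trivial, so it survives $\Gr$ unchanged and with zero Higgs field. Iterating $f$ times yields an inclusion $((F^{*})^f\sL_{1,s},0)\hookrightarrow(\Gr\circ C^{-1})^f(\oplus_i\sL_{i,s},0)\cong(\oplus_i\sL_{i,s},0)$, and composing with the projections, the degree-zero hypothesis forces $(F^{*})^f\sL_{1,s}\cong\sL_{j,s}$ for some $j$. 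From this (iterating over the finite index set) one gets a uniform period for $(\sL_{1,s},0)$. Then one inducts, using only that \emph{cokernels} of maps of periodic objects are periodic (\cite[Proposition~6.2]{KS}). No filtration surgery, no closing-up problem.

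Your approach instead aims at the general principle ``direct summands of periodic are periodic'' and tries to manufacture a block-diagonal flow. This is a legitimate route, but as you yourself flag, the closing-up step is the whole content, and your sketch does not settle it: once you replace $Fil$ by $\bigoplus_j(Fil\cap\text{$j$-th block})$ you have changed the intermediate graded objects, and there is no reason the modified flow lands back on $(\oplus_iL_i,0)$ after $f$ steps. The honest fix is precisely to invoke idempotent-completeness of the periodic category (which does follow from \cite[Proposition~6.2]{KS}); but then the filtration-modification paragraph is superfluous, and the argument collapses to ``each $(L_i,0)$ is a summand of a periodic object, hence periodic, hence torsion by Proposition~\ref{periodic_line_bundles}.'' One further wrinkle: your reduction ``combine isomorphic factors so that the $\sL_{i,s}$ are pairwise non-isomorphic'' is not well-posed, since whether $\sL_{i,s}\cong\sL_{j,s}$ depends on $s$; you cannot cut this out by removing a proper closed subset unless the $L_i$ were already pairwise non-isomorphic over $\C$. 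Drop that step---it is not needed for either argument.
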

	\begin{proof}
		By Proposition \ref{periodic_line_bundles}, it suffices to show $(\oplus_iL_i,0)$ is periodic iff each $(L_i,0)$ is torsion. The if-direction is obvioius. For the only if-direction, we do induction on the number of direct factors. Claim that $(L_1,0)$ is periodic. Take a spreading-out $(\sC,\oplus_i\sL_i)$ over $S$, such that for any geometric point $s\in S$, $(\oplus_i\sL_{i,s},0)$ is periodic of period $f$. As $(\sL_{1,s},0)\subset (\oplus_i\sL_{i,s},0)$ is a Higgs subbundle, $(F^*_{\sC_s}\sL_{1,s},\nabla_{can})$ is a flat subbundle of $C^{-1}(\oplus_i\sL_{i,s},0)$. It follows that $(F^*_{\sC_s}\sL_{1,s},0)$ is a Higgs subbundle of $\Gr\circ C^{-1}(\oplus_i\sL_{i,s},0)$. Iterating the argument, we obtain the inclusion of Higgs subbundles
		$$
		\iota: ((F^*_{\sC_s})^{f}\sL_{1,s},0)\hookrightarrow (\Gr\circ C^{-1})^f(\oplus_i\sL_{i,s},0).
		$$
		By periodicity, one has an isomorphism of graded Higgs bundles $$\phi: (\Gr\circ C^{-1})^f(\oplus_i\sL_{i,s},0)\cong (\oplus_i\sL_{i,s},0).$$ Consider the composite of $\phi\circ \iota$ with various projections $\oplus_i\sL_{i,s}\to \sL_{i,s}$. Since $\deg(\sL_{i,s})$s are all zero, it is either zero or an isomorphism. It cannot be zero for all $i$s. So $(\sL_{1,s},0)$ is indeed perodic with period $f$. The claim is proved. As the category of periodic Higgs bundles over $C$ is a rigid tensor category (\cite[Proposition 6.2]{KS}), the cokernel of the natural inclusion $(L_1,0)\subset (\oplus_iL_i,0)$ of periodic Higgs bundles is again periodic. The induction hypothesis concludes the proof. 
	\end{proof}
	
	According to the periodic Higgs conjecture, classifying periodic Higgs bundles is of clear geometric significance. In this subsection, we consider the classification problem in the case of elliptic curves. 
	
	\begin{theorem}\label{classification}
		Let $C$ be a elliptic curve over $\C$. Then the following two statements are equivalent:
		\begin{itemize}
			\item [(i)] $C$ has infinitely many primes of supersingular reduction.
			\item [(ii)] The category of periodic Higgs bundles over $C$ is semisimple whose simple objects are torsion line bundles equipped with the zero Higgs field.
		\end{itemize}
	\end{theorem}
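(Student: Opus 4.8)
The plan is to prove the two implications separately, writing $\mathscr P$ for the category of periodic Higgs bundles over $C$; it is an abelian $\C$-linear rigid tensor category, closed under subobjects and quotients (cf. \cite[Proposition 6.2]{KS} and the proof of Corollary \ref{direct sum of line bundles}). The inputs are Proposition \ref{periodic_line_bundles}, Corollary \ref{direct sum of line bundles}, Proposition \ref{nonperiodicity for unif} together with the supersingular mechanism of Lemmata \ref{criterion for uniformizing}–\ref{obstruction of Frob for supersingular}, and Atiyah's classification of semistable degree-zero bundles on an elliptic curve.

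\textbf{(i) $\Rightarrow$ (ii).} Assume $C$ has infinitely many primes of supersingular reduction. First, the simple objects of $\mathscr P$ are exactly the torsion line bundles with zero Higgs field: a periodic Higgs bundle has nilpotent Higgs field and semistable degree-zero underlying bundle — on an elliptic curve $K_C=\sO_C$, so the maximal destabilizing subbundle of $E$ is automatically $\theta$-invariant, whence $E$ is semistable — so if $(E,\theta)$ is simple then $\theta=0$ (otherwise $\ker\theta$, the kernel in $\mathscr P$ of the endomorphism $\theta$ of $(E,\theta)$, is a proper nonzero subobject), and then $E$ has no proper subbundle, hence is a degree-zero line bundle, torsion by Proposition \ref{periodic_line_bundles}. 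Since $\mathscr P$ is abelian and its objects have finite length, semisimplicity of $\mathscr P$ — and hence (ii), via Corollary \ref{direct sum of line bundles} — follows once we show $\mathrm{Ext}^1_{\mathscr P}$ vanishes between simple objects. For torsion degree-zero line bundles $M\neq M'$ already $\mathrm{Ext}^1$ of Higgs bundles vanishes, since $\mathrm{Hom}(M,M'\otimes K_C)=\mathrm{Hom}(M,M')=0$ and $\mathrm{Ext}^1_{\sO_C}(M,M')=H^1(C,M'M^{-1})=0$. For a self-extension of a fixed torsion $M$, tensoring by the invertible object $(M^{-1},0)$ reduces us to $M=\sO_C$; a \emph{graded} non-split extension of $(\sO_C,0)$ by $(\sO_C,0)$ is isomorphic either to $(E_{unif},\theta_{unif})=(\sO_C^{\oplus 2},\theta_{unif})$ or to $(\mathcal N,0)$ with $\mathcal N$ the Atiyah bundle (the only way to grade the indecomposable $\mathcal N$ forces the Higgs field to vanish). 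The first is non-periodic by Proposition \ref{nonperiodicity for unif}. For the second: at any supersingular reduction $s$ one has $F^{*}\mathcal N_s\cong\sO_{\sC_s}^{\oplus 2}$ — supersingularity makes the Hasse–Witt operator on $H^1(\sC_s,\sO)$, which controls the extension, vanish — so $C^{-1}(\mathcal N_s,0)$ already has split underlying bundle, and, exactly as in the proof of Lemma \ref{criterion for uniformizing}, the Higgs–de Rham flow then stays split with zero Higgs field and never recovers the indecomposable $\mathcal N_s$; as supersingular reductions occur in infinitely many characteristics they cannot all lie in a proper closed $Z$, so $(\mathcal N,0)$ is non-periodic. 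Hence $\mathrm{Ext}^1_{\mathscr P}((\sO_C,0),(\sO_C,0))=0$, and $\mathscr P$ is semisimple with the stated simples.

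\textbf{(ii) $\Rightarrow$ (i).} Contrapositively, suppose $C$ has only finitely many primes of supersingular reduction. Then for a suitable spreading-out $\sC\to S$ all but finitely many reductions $\sC_s$ are ordinary; at each ordinary $s$ the Hasse–Witt operator is invertible, so $F^{*}\mathcal N_s\cong\mathcal N_s$ and $(\mathcal N_s,0)$ is one-periodic, and absorbing the finitely many remaining primes into a proper closed $Z$ exhibits $(\mathcal N,0)$ as periodic. But $(\mathcal N,0)$ is a non-split self-extension of $(\sO_C,0)$ in the category of Higgs bundles, hence an indecomposable non-simple object of $\mathscr P$; therefore $\mathscr P$ is not semisimple, contradicting (ii).

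\textbf{Main obstacle.} The heart of the matter is the non-periodicity, under hypothesis (i), of the Atiyah bundle $(\mathcal N,0)$: this requires running the supersingular analysis of Lemmata \ref{criterion for uniformizing}–\ref{obstruction of Frob for supersingular} for this Higgs bundle and verifying that once the Higgs–de Rham flow reaches a split bundle with vanishing Higgs field it is trapped there, never returning to an indecomposable Atiyah bundle. A secondary point is the exactness properties of $\mathscr P$ (abelian, closed under subquotients, finite length) underlying the $\mathrm{Ext}^1$-vanishing argument, and, as always, the observation that "infinitely many supersingular primes'' precludes swallowing the supersingular locus into a proper closed subset of $S$.
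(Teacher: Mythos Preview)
Your treatment of (ii)$\Rightarrow$(i) via the contrapositive is correct and essentially coincides with the paper's Lemma~\ref{The case of N}. Likewise, your analysis of the two non-split rank-two self-extensions of $(\sO_C,0)$ --- that $(E_{unif},\theta_{unif})$ and $(\sN,0)$ are both non-periodic under hypothesis~(i) --- is right and matches the paper's base case.

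The genuine gap is in your reduction of (i)$\Rightarrow$(ii) to the $\mathrm{Ext}^1$-vanishing statement. The inference ``$\mathrm{Ext}^1_{\scrP}$ vanishes between simples $\Rightarrow$ $\scrP$ is semisimple'' requires that every nonzero object of $\scrP$ contain a simple subobject \emph{in} $\scrP$; equivalently, that $\scrP$ is closed under Higgs subbundles. You assert this (``abelian \ldots\ closed under subobjects'') citing \cite[Proposition~6.2]{KS} and the proof of Corollary~\ref{direct sum of line bundles}, but neither reference yields this. \cite[Proposition~6.2]{KS} gives that the quotient of a periodic bundle by a \emph{periodic} subbundle is periodic, and the proof of Corollary~\ref{direct sum of line bundles} only shows that a direct summand line bundle of a periodic $(\oplus_i L_i,0)$ is periodic --- not that arbitrary Higgs subbundles of arbitrary periodic bundles are periodic. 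The same gap appears in your identification of the simple objects: from ``$(E,0)$ is simple in $\scrP$'' you conclude ``$E$ has no proper subbundle'', but simplicity in $\scrP$ only forbids proper \emph{periodic} subbundles.

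This is exactly the step the paper works hardest on. Rather than asserting an abstract closure property, the paper singles out one particular subobject --- the maximal direct sum $F$ of degree-zero Higgs sub line bundles (Lemma~\ref{maximal element}) --- and proves directly, via a spreading-out argument exploiting the maximality, that $F$ is itself periodic. It then uses the \'etale multiplication-by-$m$ map to trivialize the torsion line bundles and reduce to showing that nontrivial extensions of $(\sO_C^{\oplus r},0)$ by $(\sO_C^{\oplus s},0)$ are non-periodic, which is handled by an inner induction on rank (your rank-two analysis being the base case). If you can supply an independent proof that $\scrP$ is closed under Higgs subbundles, your $\mathrm{Ext}^1$ route would be a genuine simplification; as written, the missing step is precisely the one that carries the weight in the paper's argument.
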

	Let $N$ be the unique (up to isomorphism) nontrivial extension of $\sO_C$ by $\sO_C$. 
	\begin{lemma}\label{The case of N}
		The Higgs bundle $(N,0)$ is non periodic if and only if $C$ has infinitely many primes of supersingular reduction.
	\end{lemma}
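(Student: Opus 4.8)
The plan is to reduce the periodicity of $(N,0)$ to a fibrewise computation. Fix a spreading-out $\mathcal C\to S$ and shrink $S$ so that the relative nonsplit self-extension $\mathcal N$ of $\mathcal O_{\mathcal C}$ exists (spreading out $(N,0)$) and so that a fixed finite set of residue characteristics is excluded. By \cite[Lemma 4.8]{KS} it suffices to decide, given a proper closed $Z\subset S$ and an integer $f$, whether $(\mathcal N_s,0)$ is periodic of period $\le f$ for every geometric point $s\in S-Z$ and every $W_2(k(s))$-lifting $\tilde s$. Since the Higgs field vanishes, $C^{-1}_{\mathcal C_s\subset\mathcal C_{\tilde s}}(\mathcal N_s,0)=(F^{*}\mathcal N_s,\nabla_{can})$ has $p$-curvature zero and is independent of $\tilde s$; moreover the extension class of $\mathcal N_s$ lies in $H^1(\mathcal C_s,\mathcal O_{\mathcal C_s})$, on which $F^{*}$ acts as a twist of the Hasse--Witt operator. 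I claim $(\mathcal N_s,0)$ is one-periodic when $\mathcal C_s$ is ordinary and non-periodic when $\mathcal C_s$ is supersingular.

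For ordinary $\mathcal C_s$ the Hasse--Witt operator on the one-dimensional space $H^1(\mathcal C_s,\mathcal O)$ is an isomorphism, so $F^{*}\mathcal N_s$ is again a nonsplit self-extension of $\mathcal O_{\mathcal C_s}$, hence $F^{*}\mathcal N_s\cong\mathcal N_s$; grading by the trivial Hodge filtration then yields $\Gr\circ C^{-1}(\mathcal N_s,0)\cong(\mathcal N_s,0)$, so $(\mathcal N_s,0)$ is one-periodic with respect to any $W_2$-lifting. Consequently, if $C$ does \emph{not} have infinitely many primes of supersingular reduction, then only finitely many residue characteristics carry a supersingular fibre; enlarging $Z$ to contain those (finitely many) characteristics — a proper closed subscheme with finite image in $\Spec\Z$ — leaves every fibre over $S-Z$ ordinary, so $(N,0)$ is one-periodic. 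This gives one implication.

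For the converse I must show that if $\mathcal C_s$ is supersingular then $(\mathcal N_s,0)$ is non-periodic for any $W_2$-lifting. Here the Hasse--Witt operator is zero, so $F^{*}\mathcal N_s\cong\mathcal O_{\mathcal C_s}^{\oplus 2}$; thus $C^{-1}(\mathcal N_s,0)$ has underlying bundle $\mathcal O_{\mathcal C_s}^{\oplus 2}$, and the associated graded with respect to any Hodge filtration, being a direct sum of degree-zero line bundles on an elliptic curve, is isomorphic to $(\mathcal O_{\mathcal C_s}^{\oplus 2},0)$ or to $(E_{unif},\theta_{unif})_s\cong(\mathcal O_{\mathcal C_s}^{\oplus 2},\mathrm{id})$ — never to the indecomposable $(\mathcal N_s,0)$. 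I would then run the Higgs--de Rham flow: on the zero--Higgs--field branch every term has underlying bundle $F^{*}\mathcal O_{\mathcal C_s}^{\oplus 2}=\mathcal O_{\mathcal C_s}^{\oplus 2}$, so it stays among sums of line bundles and never returns to $\mathcal N_s$; and as soon as the flow enters the branch with nonzero Higgs field it sits at $(E_{unif},\theta_{unif})_s$, which is non-periodic at the supersingular point $s$ by Lemma \ref{criterion for uniformizing} (via Lemmata \ref{structure of V} and \ref{obstruction of Frob for supersingular}), and whose flow — by the very analysis in the proof of that lemma — is absorbed into $\{(\mathcal N_s,0),(\mathcal O_{\mathcal C_s}^{\oplus 2},0)\}$ and thereafter collapses onto $(\mathcal O_{\mathcal C_s}^{\oplus 2},0)$. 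Either way no flow can return periodically to $(\mathcal N_s,0)$, so it is non-periodic. Finally, invoking the hypothesis, it remains to locate a supersingular geometric point of $S-Z$ for an arbitrary spreading-out and arbitrary proper closed $Z$: when $j(C)$ is algebraic one may take $S$ one-dimensional, $Z$ finite, and this is the hypothesis itself; when $j(C)$ is transcendental the map $j\colon S\to\mathbb A^1_{\Z}$ is dominant, and in characteristic $p$ the supersingular polynomial has degree $\sim p/12$, which for $p\gg 0$ exceeds the bounded degree of the equations cutting out $Z$, so the supersingular locus of $S_p$ cannot be contained in $Z_p$.

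The step I expect to be delicate is the supersingular branch of the flow: one must rule out \emph{any} periodic return to $(\mathcal N_s,0)$, including via an excursion through $(E_{unif},\theta_{unif})_s$, and this rests essentially on the bookkeeping already carried out in the proof of Lemma \ref{criterion for uniformizing} together with the collapsing effect of $F^{*}$ on $\mathcal N_s$ at a supersingular prime. Locating a supersingular point outside $Z$ in the transcendental-$j$ case is a secondary but genuine point; everything else is a direct application of the Hasse--Witt dichotomy.
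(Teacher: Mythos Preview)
Your overall strategy---reduce to each closed fibre and split into the ordinary and supersingular cases via the Hasse--Witt action on $H^1(\mathcal C_s,\mathcal O)$---is exactly the paper's. The ordinary case matches the paper's argument verbatim, and your closing paragraph (locating a supersingular fibre outside a prescribed proper closed $Z\subset S$) is a sensible amplification of what the paper leaves implicit.

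The supersingular case is where you and the paper part ways, and where your argument has a genuine gap. The paper simply asserts that for \emph{any} Hodge filtration one has $\Gr\circ C^{-1}(\mathcal N_s,0)\cong(\mathcal O_{\mathcal C_s}^{\oplus 2},0)$, and then observes that the trivial Higgs bundle is absorbing under the flow; the $(E_{unif},\theta_{unif})_s$ branch is never entertained. You are more cautious and allow that second possibility. But once you admit that branch, your bookkeeping breaks: by the very analysis of Lemma~\ref{criterion for uniformizing} that you cite, $C^{-1}(E_{unif},\theta_{unif})_s$ has underlying bundle $\mathcal N_s$, and taking the \emph{trivial} Hodge filtration there gives $\Gr\cong(\mathcal N_s,0)$---a return to the starting Higgs bundle after two steps. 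That is exactly a candidate $2$-periodic flow for $(\mathcal N_s,0)$, so your claim that the flow ``thereafter collapses onto $(\mathcal O_{\mathcal C_s}^{\oplus 2},0)$'' is unjustified, and the conclusion ``no flow can return periodically to $(\mathcal N_s,0)$'' does not follow from what you have written. The non-periodicity of $(E_{unif},\theta_{unif})_s$ at $s$ is beside the point: what must be excluded is the $2$-cycle
\[
(\mathcal N_s,0)\ \longrightarrow\ (E_{unif},\theta_{unif})_s\ \longrightarrow\ (\mathcal N_s,0),
\]
and you never do so. To repair the argument you must either justify the paper's stronger assertion---that the $(E_{unif},\theta_{unif})_s$ branch is in fact impossible, i.e.\ that every degree-zero sub-line bundle of $(F^*\mathcal N_s,\nabla_{can})$ induces zero Higgs field on the associated graded---or supply a separate argument ruling out that $2$-cycle.
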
 
	\begin{proof}
		Take a spreading-out $\sC/S$ of $C$. Let $\sN$ be the extension of $\sO_{\sC}$ by $\sO_{\sC}$ given by $1\in \mathrm{Ext}^1(\sO_{\sC},\sO_{\sC})$. Let $s\in S$ be a geometric point. We claim that $(\sN_s,0)$ is non periodic iff $\sC_s$ is supersingular: Suppose $\sC_s$ to be supersingular. Then
		$$
		C^{-1}(\sN_s,0)\cong (\sO_{\sC_s}^{\oplus 2},\nabla_{can}).
		$$
		It follows that, with respect to \emph{any} Hodge filtration over $C^{-1}(\sN_s,0)$, $\Gr\circ C^{-1}(\sN_s,0)\cong (\sO_{\sC_s}^{\oplus 2},0)$. However, under flow operator (with respect to any Hodge filtration), the trivial Higgs bundle goes only to the trivial Higgs bundle. Therefore, $(\sN_s,0)$ is non periodic. On the other hand, suppose now $\sC_s$ to be ordinary. Then
		$$
		C^{-1}(\sN_s,0)\cong (\sN_s,\nabla_{can}).
		$$
		Thus $\Gr_{Fil_{tr}}\circ C^{-1}(\sN_s,0)\cong (\sN_s,0)$, where $Fil_{tr}$ stands for the trivial Hodge filtration. So $(\sN_s,0)$ is one-periodic when $\sC_s$ is ordinary, and the claim is proved. The lemma follows. 
	\end{proof}
	
	The above lemma settles one direction of Theorem \ref{classification}. To handle with the classification problem, we start with a classical result of Atiyah (\cite[Theorem 9]{Ati}) on vector bundles over elliptic curves.
	\begin{lemma}[Atiyah]\label{structure of indec bundle}
		An indecomposable vector bundle of rank $r$ over $C$ is of form $S^{r-1}N\otimes L$, where $L$ is a line bundle.
	\end{lemma}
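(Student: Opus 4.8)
The plan is to reduce to the classical structure theory of Atiyah \cite{Ati} and then supply the one genuinely new ingredient, the identification $F_r\cong S^{r-1}N$, where $F_r$ denotes Atiyah's canonical bundle. I would treat the statement for bundles of degree $0$ (the only case used below; for rank $r$ and degree divisible by $r$ one reduces to this by a twist). Recall $F_r$: put $F_1=\sO_C$ and inductively choose a nonsplit extension
\begin{equation*}
0\longrightarrow \sO_C\longrightarrow F_r\longrightarrow F_{r-1}\longrightarrow 0 .
\end{equation*}
Using $K_C\cong\sO_C$, Riemann--Roch and induction one checks that $\mathrm{Ext}^1(F_{r-1},\sO_C)\cong H^1(C,F_{r-1}^{\vee})$ is one-dimensional, so $F_r$ is unique up to isomorphism, and that $h^0(F_r)=1$; note $F_2\cong N$. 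The two facts from Atiyah I would invoke are: \textbf{(a)} each $F_r$ is indecomposable, because $\End(F_r)$ is a local ring; and \textbf{(b)} every indecomposable degree-$0$ bundle of rank $r$ on $C$ is isomorphic to $F_r\otimes L$ for a unique $L\in\mathrm{Pic}^0(C)$, proved by exhibiting the family $\{F_r\otimes L\}_{L}$ as the whole (one-dimensional, irreducible) moduli of such bundles. Given (b), the lemma is equivalent to the claim $F_r\cong S^{r-1}N$.

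To prove $F_r\cong S^{r-1}N$ I would first collect the formal facts: $S^{r-1}N$ has rank $r$ and, since $\det N\cong\sO_C$, determinant $(\det N)^{\otimes\binom{r}{2}}\cong\sO_C$, hence degree $0$; and applying $S^{r-1}(-)$ to the subbundle inclusion $\sO_C\hookrightarrow N$ gives an inclusion $\sO_C=S^{r-1}\sO_C\hookrightarrow S^{r-1}N$, so $h^0(S^{r-1}N)\geq 1$. By (b) it remains to show $S^{r-1}N$ is indecomposable, which I would do by induction on $r$ using: the Clebsch--Gordan isomorphism for rank-two bundles,
\begin{equation*}
N\otimes S^{r-1}N\;\cong\; S^{r}N\ \oplus\ \bigl(S^{r-2}N\otimes\det N\bigr)\;\cong\; S^{r}N\oplus S^{r-2}N,
\end{equation*}
which realizes $S^{r}N$ as a direct summand of $N\otimes S^{r-1}N$; the filtration of $S^{m}N$ by subbundles whose graded quotients are all $\sO_C$ (induced by $0\to\sO_C\to N\to\sO_C\to 0$), showing each $S^{m}N$ is semistable of slope $0$; and the Krull--Schmidt theorem. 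Running the filtration with $\End(\sO_C)=H^0(\sO_C)=k$ and $H^1(\sO_C)=k$ one proves simultaneously that $h^0(S^{m}N)=1$ and that $\End(S^{m}N)$ is local; feeding this into Krull--Schmidt applied to $N\otimes S^{r-1}N\cong S^{r}N\oplus S^{r-2}N$ forces $S^{r}N$ to be indecomposable, which closes the induction. Thus $S^{r-1}N$ is an indecomposable degree-$0$ rank-$r$ bundle carrying a nonzero section, so $S^{r-1}N\cong F_r$ by (b), and a general indecomposable degree-$0$ bundle of rank $r$ is $F_r\otimes L\cong S^{r-1}N\otimes L$.

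I expect the main obstacle to be the indecomposability of $S^{r-1}N$, equivalently the statement that $\End(S^{r-1}N)$ is local (every endomorphism is a scalar plus a nilpotent): the determinant, section and semistability computations and the Clebsch--Gordan identity are all formal, whereas the locality of the endomorphism ring is the point where one genuinely uses that $C$ is elliptic, through the (one-)dimensionality of $H^{\bullet}(C,\sO_C)$ and the self-duality $N^{\vee}\cong N$. An alternative route that avoids the induction is to pass to $D^{b}(\mathrm{Coh}\,C)$ and apply a Fourier--Mukai transform carrying slope-$0$ semistable sheaves to torsion sheaves: the indecomposable length-$r$ torsion sheaves are the thickenings $\sO_{rx}$, and one verifies that both $F_r$ and $S^{r-1}N$ are sent to $\sO_{re}$ at the identity $e\in C$, yielding the isomorphism directly.
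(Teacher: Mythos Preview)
The paper does not actually prove this lemma: it is stated with the attribution ``Atiyah'' and the citation \cite[Theorem 9]{Ati}, and no argument is given. So there is nothing in the paper to compare your proposal against; you have supplied substantially more than the authors do.

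That said, your sketch is essentially Atiyah's own route. You invoke his classification of indecomposable degree-$0$ bundles as $F_r\otimes L$ and then argue $F_r\cong S^{r-1}N$; the latter identification is precisely the content of Theorem 9 in \cite{Ati}, and his proof also proceeds by showing $S^{r-1}F_2$ is indecomposable of the right invariants. Your inductive scheme via Clebsch--Gordan and the filtration with trivial graded pieces is correct in outline; the step ``one proves simultaneously that $h^0(S^mN)=1$ and that $\End(S^mN)$ is local'' is where the genuine work sits and deserves to be written out if you want a self-contained proof, but it goes through. You are also right to flag the degree issue: as literally stated the lemma only covers bundles whose degree is a multiple of the rank (since $\deg(S^{r-1}N\otimes L)=r\deg L$), and indeed the paper only applies it in Lemma \ref{structure of indec graded Higgs bundle} to degree-$0$ bundles, so your restriction is exactly the relevant case. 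The Fourier--Mukai alternative you mention is a clean modern shortcut and would also work.
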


	A Higgs bundle is \emph{indecomposable} if it cannot be written into direct sum of two proper Higgs subbundles. A periodic Higgs bundle must be graded and semistable of degree zero.
	\begin{lemma}\label{structure of indec graded Higgs bundle}
		An indecomposable semistable graded Higgs bundle $(E,\theta)$ of degree zero over $C$ is, up to tensoring with a line bundle of degree zero, a successive extension of $(\sO_C,0)$s.
	\end{lemma}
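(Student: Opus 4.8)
The plan is to feed Atiyah's classification (Lemma~\ref{structure of indec bundle}) into a Hom-vanishing computation and then induct on the rank. Throughout I would fix a nowhere-vanishing global $1$-form to trivialize $\Omega_C\cong\sO_C$, so that a Higgs field becomes an honest $\sO_C$-linear endomorphism $\bar\theta$ of $E$; since $(E,\theta)$ is graded, $\bar\theta$ is nilpotent. The first step is to pin down the underlying bundle. Higgs-semistability of degree $0$ implies $E$ is bundle-semistable of degree $0$: if $F\subset E$ were the maximal destabilizing subbundle, then $\Hom(F,E/F)=0$ (because $F$ is semistable of slope $\mu(F)$ while every Harder--Narasimhan slope of $E/F$ is strictly smaller), so the composite $F\xrightarrow{\theta}E\otimes\Omega_C\to (E/F)\otimes\Omega_C$ vanishes and $F$ is a $\theta$-invariant destabilizer of $(E,\theta)$, a contradiction. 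Hence by Krull--Schmidt and Lemma~\ref{structure of indec bundle}, $E\cong\bigoplus_j S^{r_j-1}N\otimes L_j$ with $L_j\in\mathrm{Pic}^0(C)$ (each indecomposable summand is simultaneously a subbundle and a quotient bundle of the semistable degree-$0$ bundle $E$, so has slope $0$).

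Next I would show the twists $L_j$ all agree. The computational heart is the vanishing $\Hom(S^aN\otimes L,S^bN\otimes L')=0$ for degree-zero line bundles $L\not\cong L'$. Indeed this group is $H^0\big((S^aN)^\vee\otimes S^bN\otimes L'L^{-1}\big)$; since $N^\vee\cong N$ via the pairing $N\otimes N\to\det N=\sO_C$, the bundle $(S^aN)^\vee\otimes S^bN\cong S^aN\otimes S^bN$ is a successive extension of copies of $\sO_C$, and tensoring by the nontrivial degree-zero line bundle $M:=L'L^{-1}$ and running up the filtration, using $H^0(M)=0$, kills all global sections. Now group the Atiyah summands of $E$ by isomorphism class of twist; because $\Omega_C\cong\sO_C$, the off-diagonal blocks of $\theta$ between different groups lie in such vanishing Hom-groups, so $(E,\theta)$ splits as a Higgs bundle according to the grouping, and indecomposability leaves a single twist $L$. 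Replacing $(E,\theta)$ by $(E,\theta)\otimes(L^{-1},0)$, we may assume $E=\bigoplus_j S^{r_j-1}N$, i.e.\ $E$ is a successive extension of $\sO_C$ as a bundle, with $\theta$ still nilpotent.

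It then remains to prove the following, which I would do by induction on $\rank E$: if $(E,\theta)$ is a Higgs bundle with $\theta$ nilpotent whose underlying bundle is a successive extension of $\sO_C$, then $(E,\theta)$ is a successive extension of $(\sO_C,0)$ in the category of Higgs bundles. The nilpotent endomorphism of the nonzero finite-dimensional vector space $H^0(C,E)=\Hom(\sO_C,E)$ induced by $\bar\theta$ has nonzero kernel, so there is a nonzero $s\colon\sO_C\hookrightarrow E$ with $\theta\circ s=0$; as $E$ is semistable of slope $0$, the saturation of $s(\sO_C)$ is a degree-$0$ line subbundle containing $s(\sO_C)\cong\sO_C$, hence equals $s(\sO_C)$, so $(s(\sO_C),0)\cong(\sO_C,0)$ is a Higgs subbundle. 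For the quotient: every indecomposable summand of the degree-$0$ bundle $E/s(\sO_C)$ is a quotient bundle of $E$, hence of nonnegative degree, and the degrees sum to $0$, so every summand has degree $0$; composing $E\twoheadrightarrow E/s(\sO_C)$ with the projection onto such a summand and invoking the Step~2 vanishing rules out nontrivial twists. Thus $E/s(\sO_C)$ is again a successive extension of $\sO_C$, carrying the induced nilpotent Higgs field, and induction applies. (Alternatively, one could run this last step on the Fourier--Mukai side, where these data become a finite-length $k[[t]]$-module with a nilpotent endomorphism, and the conclusion is immediate from the $t$-torsion filtration.)

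The only real subtlety is bookkeeping rather than substance: the sub-$\sO_C$ extracted in the last step need not be a homogeneous piece of any visible filtration of $E$, so one genuinely has to verify --- via the degree count plus the Hom-vanishing --- that the class of bundles that are successive extensions of $\sO_C$ is stable under the quotient by it; and one must note that the induction is carried out in the category of (not necessarily graded) Higgs bundles, the grading being used only once, at the outset, to guarantee that $\theta$ is nilpotent.
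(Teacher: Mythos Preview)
Your argument is correct and complete. The route differs from the paper's in two places, and in each case your variant is a bit slicker.

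For the ``all twists agree'' step, the paper decomposes each graded piece $E^i$ separately by Atiyah and then runs a combinatorial argument: it shows first that every $L^{ij}$ has degree zero (using nilpotency of $\theta$ along the grading together with semistability), and then partitions the summands into classes $S_{il}$ indexed by the isomorphism type of the twist, arguing via indecomposability that all classes must meet and hence coincide. You instead decompose all of $E$ at once, observe that the off-block entries of $\bar\theta$ lie in $\Hom(S^aN\otimes L,S^bN\otimes L')$ with $L\not\cong L'$, prove this Hom group vanishes by the filtration argument on $S^aN\otimes S^bN\otimes L'L^{-1}$, and conclude that $(E,\theta)$ would split. This avoids the bookkeeping with the $S_{il}$ entirely and uses the grading only through the single consequence that $\bar\theta$ is nilpotent.

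For the inductive step, the paper locates the sub-$(\sO_C,0)$ inside a fixed graded piece of $E'$ (where $\theta$ automatically vanishes) and observes that the quotient is again graded with each piece a sum of $S^rN$'s. You instead produce it as an element of $\ker\big(\bar\theta\ \text{acting on}\ H^0(E)\big)$, check saturation by semistability, and then---this is the one genuinely new ingredient you need over the paper---verify that the quotient bundle is again a successive extension of $\sO_C$ by combining the degree count with the same Hom-vanishing. The paper does not need this last verification because its sub-$\sO_C$ sits inside a single Atiyah block of a single graded piece, so the quotient is visibly of the same shape. Your approach trades that structural transparency for an argument that works uniformly in the (ungraded) Higgs category, exactly as you note in your closing remark. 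Both approaches establish the induction without assuming indecomposability, which is what is actually needed downstream.
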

	\begin{proof}
		Up to a shift of indices, we may write $E=\bigoplus_{0\leq i\leq n} E^i$ with $\theta: E^i\to E^{i-1}$ nonzero for each $i>0$. By Lemma \ref{structure of indec bundle}, $E^i$ is written into a direct sum
		$$
		E^i=\oplus_j L^{ij}\otimes S^{r_{ij}}N,
		$$
		where $L^{ij}$ is some line bundle. We claim that the $L^{ij}$s are all isomorphic to each other. We first observe the following simple fact: If there is a nonzero morphism $L\otimes S^rN\to L'\otimes S^{r'}N$, then $\deg L\leq  \deg L'$; furthermore, if $\deg L=\deg L'$, then $L\cong L'$.  Note also that, if $\theta(L^{ij}\otimes S^{r_{ij}}N)=0$ for some $ij$, then $\deg L^{ij}\leq 0$ by semistability. As $\theta$ is nilpotent, it follows that $\deg L^{ij}\leq 0$ for each $ij$. But since $\deg E=0$, one must have $\deg L^{ij}=0$ for each $ij$. Assume $E^n$ contains $m$ indecomposable factors. We denote by $S_{il}, 1\leq l\leq m$ the subset of $L^{ij}$s in $E^i$ with $L^{ij}\cong L^{nl}$. First, each $L^{ij}$ belongs to some $S_{il}$. This is because the direct sum of those indecomposable factors in $E^i$ wth $L^{ij}$ in some $S_{il}$, together with the induced Higgs field, forms a direct factor of $(E,\theta)$ which must be the whole $(E,\theta)$ by indecomposability. Second, for any $l_1\neq l_2$, there exists some $i$ such that $S_{il_1}\cap S_{il_2}\neq \emptyset$. This is also clear, for otherwise the direct sum of those indecomposable factors in $E^i$ wth $L^{ij}$ in $S_{il}$ would form a proper direct factor of $(E,\theta)$. Therefore, $L^{nl}$s are isomorphic to each other by the second point, and all $L^{ij}$s are isomorphic to each other by the first point. The claim is proved. Therefore,
		$$
		(E,\theta)=(L,0)\otimes (E',\theta'),
		$$
		where $L$ is a line bundle of degree zero and each indecomposable factor of $E'$ is of form $S^rN$. To show that $(E',\theta')$ is a successive extension of $(\sO_C,0)$s, we do induction on rank of $E'$. Note that we shall not use the indecomposibility of $(E',\theta')$ in the following argument. So $E'=\oplus_{0\leq i\leq n}E^{'i}$ with $E^{'i}=\oplus_jS^{r_{ij}}N$. Any $S^{r_{nj}}N$ contains a unique subline bundle $\sO_C$ which gives rise to a Higgs sub line bundle $(\sO_C,0)$ of $(E',\theta')$. Note that $(E',\theta')/(\sO_C,0)$ is again graded and semistable of degree zero, and its bundle part is in fact a direct sum of $S^rN$s. By induction, it is a successive extension of $(\sO_C,0)$s. The lemma follows.
	\end{proof}
	\begin{lemma}\label{maximal element}
		Let $(E,\theta)$ be a semistable graded Higgs bundle of degree zero over $C$. Let $\Sigma$ be the set of direct sums of degree zero Higgs sub line bundles of $(E,\theta)$. Then $\Sigma$ is nonempty and has a unique maximal element.
	\end{lemma}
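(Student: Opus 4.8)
The plan is to produce the \emph{greatest} element of $\Sigma$ explicitly, namely the subsheaf $E_0\subseteq E$ equal to the sum $\sum_L L$ taken over \emph{all} degree zero Higgs sub line bundles $L$ of $(E,\theta)$, and then to verify two things: (a) this sum is in fact a \emph{finite direct} sum, so that $(E_0,\theta|_{E_0})\in\Sigma$; and (b) every element of $\Sigma$ is contained in $E_0$. Point (b) is immediate: by definition an element of $\Sigma$ is the sum of its degree zero Higgs sub line bundle summands, each of which is one of the $L$'s, hence is contained in $E_0$. So the content lies in nonemptiness and in (a).

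For nonemptiness, decompose $(E,\theta)$ into indecomposable Higgs subbundles and apply Lemma \ref{structure of indec graded Higgs bundle}: each indecomposable factor is isomorphic to $(L,0)\otimes (E',\theta')$ with $\deg L=0$ and $(E',\theta')$ a successive extension of copies of $(\sO_C,0)$. The bottom step of that extension filtration is a copy of $(\sO_C,0)$; twisting back by $L$ yields a degree zero Higgs sub line bundle of $(E,\theta)$, so $\Sigma\neq\emptyset$. (Over $\C$ one could instead invoke the Simpson correspondence together with the fact that two commuting complex matrices share an eigenvector, but using Lemma \ref{structure of indec graded Higgs bundle} keeps the argument uniform in the characteristic.)

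For (a), the key closure property is: if $L_1,\dots,L_{n-1}$ are degree zero Higgs sub line bundles whose sum $M:=\bigoplus_{i<n}L_i\subseteq E$ is a direct sum that is saturated of degree zero, and $L_n$ is a degree zero Higgs sub line bundle with $L_n\not\subseteq M$, then $L_1,\dots,L_n$ likewise sum to a saturated direct sum of degree zero. Indeed, $M\cap L_n$ is a \emph{proper} subsheaf of the line bundle $L_n$ (it cannot be all of $L_n$, since $L_n\not\subseteq M$), hence is either $0$ or has strictly negative degree; in the latter case the exact sequence $0\to M\cap L_n\to M\oplus L_n\to M+L_n\to 0$ forces $\deg(M+L_n)=-\deg(M\cap L_n)>0$, contradicting the semistability of $E$. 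So $M\cap L_n=0$, whence $M+L_n=M\oplus L_n$ has rank $n$ and degree $0$; as $E$ is semistable of degree zero, the saturation of $M\oplus L_n$ again has degree $0$ and therefore equals $M\oplus L_n$. Now start with any single degree zero Higgs sub line bundle (automatically saturated of degree zero, again by semistability) and repeatedly enlarge by this rule. The rank strictly increases at each step, so the procedure terminates; at termination no degree zero Higgs sub line bundle lies outside the current direct sum, so the current direct sum equals $E_0$. Thus $E_0\in\Sigma$, and by (b) it is the unique maximal — indeed the greatest — element.

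I expect the only delicate point to be the slope bookkeeping inside the closure property: the repeated use of the two elementary facts that a positive-rank subsheaf of a semistable degree zero bundle has degree $\le 0$, while the saturation of a subsheaf has degree no smaller than the subsheaf itself, pins all the relevant degrees to $0$ and upgrades the inclusions to equalities. Everything else is formal, and the one nontrivial external input, Atiyah's classification of bundles on an elliptic curve, is already available through Lemma \ref{structure of indec graded Higgs bundle}.
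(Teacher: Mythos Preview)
Your argument is correct and in fact establishes the slightly stronger conclusion that $\Sigma$ has a \emph{greatest} element, not merely a unique maximal one. The nonemptiness step is identical to the paper's. For uniqueness, however, the paper argues differently: it supposes two maximal elements $L=\bigoplus_i L_i$ and $M=\bigoplus_j M_j$ exist, considers the summation map $\alpha\colon (L\oplus M,0)\to (E,\theta)$, and observes that since $L\oplus M$ is polystable of degree zero and both source and target are semistable of slope zero, the image $\mathrm{im}(\alpha)=L+M$ is again a direct sum of degree zero line bundles, hence lies in $\Sigma$ and strictly contains $L$ --- a contradiction. So the paper's proof is a one-shot merge of two maximals via the semisimplicity of the category of slope-zero polystable bundles, whereas you build the greatest element inductively, adding one line bundle at a time and using only the elementary slope inequality (subsheaves of a semistable degree zero bundle have nonpositive degree) together with saturation. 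Your route avoids invoking that quotients of polystable objects are polystable, at the cost of a short induction; the paper's route is terser but leans on that structural fact. Both are perfectly valid.
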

	\begin{proof}
		We write $(E,\theta)=\oplus_i(E_i,\theta_i)$ into a direct sum of indecomposable graded Higgs bundles. Clearly, each $(E_i,\theta_i)$ is semistable of degree zero. By Lemma \ref{structure of indec graded Higgs bundle}, each $(E_i,\theta_i)$ contains a degree zero Higgs sub line bundle. Hence $S$ is nonempty. Assume we have two maximal elements in $\Sigma$, say $L=\oplus_i L_i$ and $M=\oplus_jM_j$. Consider the summation map
		$$
		\alpha: (L\oplus M,0) \to (E,\theta).
		$$
		As both sides of $\alpha$ are semistable of the same degree, $\mathrm{im}(\alpha)$ is also semistable of degree zero. As $L\oplus M$ is polystable of degree zero, $\mathrm{im}(\alpha)$ is a direct sum of degree zero line bundles and has strictly larger rank than that of $L$, contradicting the maximality of $L$. The lemma is proved.
	\end{proof}
	Now we proceed to the proof of Theorem \ref{classification}.
	\begin{proof} 
		It remains to prove that, if $C$ has infinitely many primes of supersingular reduction, then (ii) in the theorem holds. 
		
		We conisder first a class of graded semistable Higgs bundles over $C$, which are \emph{nonzero} extensions of $(\sO^{\oplus r}_C,0)$ by $(\sO_C^{\oplus s},0)$ for $r,s\geq 1$. We shall prove that they are non periodic. Let $E=(E,\theta)$ be such a Higgs bundle. We do induction on rank of $E$. Proposition \ref{nonperiodicity for unif} and Lemma \ref{The case of N} together imply the rank two case, namely $r=s=1$. In general, by Lemma \ref{structure of indec graded Higgs bundle}, the bundle of $E$ is a direct sum of $S^rN$s, and there exists a Higgs subbundle $(\sO_C,0)$ of $E$, giving rise to a short exact sequence of graded Higgs bundles:
		$$
		0\to (\sO_C,0)\to E\stackrel{\pi}{\to} F\to 0.
		$$
		If $F$ is a nonzero extension of $(\sO^{\oplus r'},0)$ by $(\sO^{\oplus s'},0)$ for some $r', s'$, then $F$ is non periodic by inducion hypothesis. As $(\sO_C,0)$ is certainly periodic, $E$ cannot be periodic by \cite[Proposition 6.2]{KS}. So we may assume $F$ is the trivial Higgs bundle $(\sO_C^{\oplus n},0)$ and the above exact sequence is non-split. We consider various projections 
		$$
		p_i: F= \sO_C^{\oplus n}\to \sO_C^{\oplus n-1}, \quad 0\leq i\leq n,
		$$
		where the $i$-th projection means omitting the $i$-th factor of $F$. Set $E_i$ to be the kernel of $\pi\circ p_i$. {We claim} that at least one of $E_i$, which is a rank two graded Higgs bundle, must be a nontrivial extension of $\sO_C$ by $\sO_C$. The claim furnishes the proof for the induction step because of the rank two case and \cite[Proposition 6.2]{KS}. To show the claim, we assume the contrary that each $E_i$ is trivial and look at the summation map
		$$
		\beta: \oplus_{i}E_i\to E
		$$
		It is easy to check the following properties of $\beta$: i) $\ker(\beta)$ is isomorphic to $\sO_C^{\oplus n}$; ii) the composite $\pi\circ \beta: \oplus_iE_i\to F$ is surjective. Hence $\mathrm{im}(\beta)=\sum_iE_i\subset E$ gives rise to a splitting of $\pi$. This is a contradiction.
		
		Let $E=(E,\theta)$ be a periodic Higgs bundle over $C$. We prove (ii) by induction on the rank of $E$. The rank one case follows from Proposition \ref{periodic_line_bundles}. Let $F=\oplus_{i\in \Sigma}L_i\subset E$ be the direct sum of degree zero Higgs sub line bundles, where $\Sigma$ is givn in Lemma \ref{maximal element}. Take a spreading-out $(\sC,\sE,\sF)/S$ of $(C,E,F)$. Shrinking $S$ if necessary, we may assume that over any geometric point $s\in S$, $\sE_s$ is periodic and $\sF_s\subset \sE_s$ is the maximal direct sum of degree zero Higgs sub line bundles. By maximality, $\sF_s$ must be periodic as well. Thus $F\subset E$ is a nontrivial periodic Higgs subbundle. If $F=E$, then (ii) follows from Corollary \ref{direct sum of line bundles}. Otherwise, $E/F$ is also a nontrivial periodic Higgs bundle by \cite[Proposition 6.2]{KS}. By induction hypothesis, both $F$ and $E/F$ are direct sums of torsion line bundles. We claim that the extension class of $E$, regarded as an extension of $E/F$ by $F$, is zero. Clearly, the claim completes the whole proof. Let $m$ be a common multiple of orders of torsion line bundles in $F$ and $E/F$. Then $m^*E$ becomes an extension of $(\sO_C^{\oplus r},0)$ by $(\sO_C^{\oplus s},0)$ for some $r,s\geq 1$, where $m: C\to C$ is the multiplication by $m$. It is \'etale, and therefore $m^*E$ is again periodic after \cite[Lemma 4.3]{KS}. Since the extension class of $m^*E$ remains nonzero, the periodicity of $m^*E$ contradicts the assertion in the last paragraph. The claim is proved.
	\end{proof}

	\section{Hyperbolic curves}
	In this section, we continue to explore the periodicity of $(E_{unif},\theta_{unif})$ in the case of hyperbolic curves. The findings of the first two subsections are analogous to those discovered in our early study of the projective line minus four points \cite{LS}, where we sought to link the periodicity of the the associated logarithmic uniformizing Higgs bundles to modularity of the underlying (affine) hyperbolic curves. In the last subsection, we resolve \cite[Conjecture 1.3]{SZZ}, that provides a mass formula for Shimura curves. The classical Deuring-Eichler mass formula may be viewed as a mass formula for modular curves. A finer study of the periodicity of $(E_{unif},\theta_{unif})$ in the case of Shimura curves leads to the notion of \emph{canonical Higgs-de Rham flows} over good reductions of Shimura curves. We view this canonical Higgs-de Rham flow over the good reduction of a Shimura curve as an algebro-geometric incarnation of 'uniformization'.

	\subsection{Non one-periodicity for generic curves}
	The following result is analogous to \cite[Proposition 1.5]{LS}.
	\begin{proposition}\label{non-periodicity}
		Let $C$ be a generic projective hyperbolic curve. Then $(E_{unif},\theta_{unif})$ over $C$ is non one-periodic.
	\end{proposition}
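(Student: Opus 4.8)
The plan is to run, one genus level higher, the three-step analysis of the elliptic case (Lemmas~\ref{criterion for uniformizing}--\ref{obstruction of Frob for supersingular}), arguing by contradiction. Fix a spreading-out $(\sC,\sE_{unif},\Theta_{unif})\to S$ with $S$ integral, flat and of finite type over $\Z$, and with the classifying morphism $S\to\sM_{g,\Z}$ dominant; after shrinking $S$ we may assume $S\to\sM_{g,\Z}$ is smooth (the function-field extension sits in characteristic zero, so generic smoothness applies, and for $p\gg 0$ the generic point of $\sM_{g,\F_p}$ lies in the smooth locus), and that were $(E_{unif},\theta_{unif})$ one-periodic, the isomorphism $\Gr\circ C^{-1}_{\sC_s\subset\sC_{\tilde s}}(\sE_{unif,s},\Theta_{unif,s})\cong(\sE_{unif,s},\Theta_{unif,s})$ would hold at every geometric point $s\in S$ for every $W_2(k(s))$-lift $\tilde s\hookrightarrow S$. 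It then suffices to produce a single pair $(s,\tilde s)$ for which this fails.

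First I would record the criterion analogous to Lemma~\ref{criterion for uniformizing}. Choose $p\nmid g-1$; then a line bundle of degree $g-1$ on $\sC_s$ carries no integrable connection, since its de Rham first Chern class is $(g-1)\bmod p$ times a generator of $H^1(\sC_s,K_{\sC_s})\cong k(s)$, hence nonzero (over $\overline{\F}_p$ one has $\mathrm{Pic}^0/p=0$, so the de Rham Chern class only sees the degree mod $p$). Because $(\sE_{unif,s},\Theta_{unif,s})$ is the stable rank-two uniformizing Higgs bundle and a period-one flow must reproduce it, any realizing Hodge filtration must have $Fil^1$ a sub-line bundle $L\cong K^{1/2}_{\sC_s}$ of $V_s:=C^{-1}_{\sC_s\subset\sC_{\tilde s}}(\sE_{unif,s},\Theta_{unif,s})$ with quotient $\cong K^{-1/2}_{\sC_s}$; moreover such an $L$ cannot be $\nabla_s$-horizontal, so the induced Higgs field is automatically nonzero. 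Thus \emph{$(E_{unif},\theta_{unif})$ is one-periodic at $s$ with respect to $\tilde s$ if and only if the underlying bundle of $V_s$ is an extension of $K^{-1/2}_{\sC_s}$ by $K^{1/2}_{\sC_s}$} --- the exact analogue, with $K^{1/2}_{\sC_s}$ in place of $\sO_{\sC_s}$, of the elliptic criterion. Second, as in Lemma~\ref{structure of V}, I would compute $V_s$ by exponential twisting \cite{LSZ0}: $V_s$ is the elementary modification of $F^*\sE_{unif,s}=F^*K^{1/2}_{\sC_s}\oplus F^*K^{-1/2}_{\sC_s}$ built from $F^*\Theta_{unif,s}$ and a \v{C}ech representative of the Frobenius-lifting obstruction $ob(F_{\sC_s})\in H^1(\sC_s,F^*T_{\sC_s})$ attached to $\sC_{\tilde s}$. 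In particular the isomorphism class of $V_s$ --- hence whether it is an extension of $K^{-1/2}_{\sC_s}$ by $K^{1/2}_{\sC_s}$ --- depends only on the image of $ob(F_{\sC_s})$ under an explicit connecting/cup-product map, and so only on the point $[\sC_{\tilde s}]$ of the $(3g-3)$-dimensional $W_2$-deformation space $H^1(\sC_s,T_{\sC_s})$ of $\sC_s$.

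The crux is to produce a bad lift. Being an extension of $K^{-1/2}_{\sC_s}$ by $K^{1/2}_{\sC_s}$ is a positive-codimension condition on the degree-zero rank-two bundle $V_s$: a general degree-zero rank-two bundle $W$ has $h^0(W\otimes K^{-1/2}_{\sC_s})=0$ and so contains no copy of $K^{1/2}_{\sC_s}$ at all, and even among unstable $W$ it fails as soon as the Harder--Narasimhan sub has degree $<g-1$. On the other hand, varying $[\sC_{\tilde s}]$ over $H^1(\sC_s,T_{\sC_s})$ moves $ob(F_{\sC_s})$ within a positive-dimensional affine family, hence moves $V_s$; and by genericity of $C$ --- concretely, smoothness of $S\to\sM_{g,\Z}$ at a suitable $s$ of residue characteristic $p\gg 0$ with $p\nmid g-1$, which by formal smoothness makes the $W_2(k(s))$-points of $S$ over $s$ surject onto that deformation space --- we may choose $[\sC_{\tilde s}]$ off the locus where $V_s$ stays of uniformizing type. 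For such $\tilde s$, $(\sE_{unif,s},\Theta_{unif,s})$ is not one-periodic, a contradiction. The final assertion then follows, since the non-generic curves form $\bigcup_W W(\C)$ over the countably many horizontal proper closed subschemes $W\subsetneq\sM_{g,\Z}$, and non-one-periodicity, once known at the generic point of $\sM_{g,\Z}$, spreads out over a dense Zariski-open of each $\sM_{g}$.

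The hard part is the crux: one must show that $[\sC_{\tilde s}]\mapsto$ (isomorphism class of $V_s$) is genuinely non-constant along the family of $W_2$-liftings \emph{realized inside $S$}, and that it escapes the uniformizing locus --- equivalently, that the cup-product of $ob(F_{\sC_s})$ with $F^*\Theta_{unif,s}$ controlling the relevant extension class is not pinned to one value, a Hasse--Witt-type nonvanishing that should follow from genericity (ordinariness and nondegeneracy of the Cartier operator on $\sC_s$). Conceptually this step is the periodicity-theoretic incarnation of Faltings' non-existence of an algebraic uniformizing connection \cite{Fa83}: were one-periodicity to persist for every $W_2$-lift, the Hodge filtrations flowing back would assemble $p$-adically into a Frobenius-invariant indigenous bundle on \emph{every} lift of $\sC_s$ --- i.e.\ make every lift a canonical lift in the sense of Mochizuki's $p$-adic Teichm\"uller theory --- which is absurd. (This is also why one should only expect one-periodicity when the spreading-out is itself ``canonical'', as happens for Shimura curves.)
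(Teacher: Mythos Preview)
Your framework is on the right track and overlaps with the paper's Lemma~\ref{A cap K}: one-periodicity with respect to $\tilde s$ forces $V_s$ to admit $K^{1/2}_{\sC_s}$ as a sub-line bundle (equivalently, the maximal destabilizer of $V_s$ has degree $g-1$), and by smoothness of $S\to\sM_g$ you may realize every $W_2$-lift of $\sC_s$ inside $S$. But the step you yourself flag as ``the hard part'' is a genuine gap, not a detail. Saying that a \emph{general} degree-zero rank-two bundle has no $K^{1/2}$ sub, or that moving $[\sC_{\tilde s}]$ ``moves $V_s$'', does not establish that the \emph{specific} family of bundles arising from inverse Cartier transforms escapes the uniformizing locus. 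Concretely, the map $[\sC_{\tilde s}]\mapsto [V_s]$ factors through the extension class of $V_s$ in $H^1(\sC_s,K^{-p}_{\sC_s})$; as $[\sC_{\tilde s}]$ varies this class sweeps out a $(3g-3)$-dimensional affine subspace $A$, and the bad locus is $A\cap K$ for a certain cone $K$ of complementary dimension $(p-1)(2g-2)$. A priori $A$ could lie entirely inside $K$, and neither ordinariness of $\sC_s$ nor nondegeneracy of the Cartier operator rules this out. Your appeals to Faltings and Mochizuki are suggestive heuristics but do not substitute for an argument.

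The paper fills this gap by a degeneration input from \cite{LSYZ}. The construction of $A$ and $K$ extends to stable curves, and for a \emph{totally degenerate} stable curve one computes directly that $A\cap K$ consists of a single point \cite[Proposition~4.7]{LSYZ}. Semicontinuity of fiber dimension for $A\cap K\to B$, applied first along a one-parameter smoothing of a totally degenerate curve and then across an open of $\sM_g\times\bar\F_p$, yields a nonempty open $V_p\subset U_p$ over which $A\cap K$ is finite. For any $s$ with image in $V_p$, the $(3g-3)$-dimensional torsor of lifts then contains some $\tilde s$ with $[\sC_{\tilde s}]\notin A\cap K$, and your criterion (equivalently Lemma~\ref{A cap K}) gives non-one-periodicity at $(s,\tilde s)$. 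This degeneration-plus-semicontinuity step is the essential missing ingredient in your proposal.
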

	The proof relies on several basic results established in \cite{LSYZ}, which we recall here for convenience of readers. Let $k$ be an algebraically closed field of positive characteristic $p$ and $C$ be a smooth projective curve of genus $g\geq 2$. In the affine space $H^1(C,K_C^{-p})$, there are two closed subvarieties $A$ and $K$: Let $\{[\tilde C]\}$ be the set of isomorphism classes of $W_2(k)$-liftings of $C$. It forms a torsor under $H^1(C,K_C^{-1})$. The bundle part of $C^{-1}_{C\subset \tilde C}(E_{unif},\theta_{unif})$, regarded as an extension of $K_C^{p/2}$ by $K_C^{-p/2}$, defines an element in $H^1(C,K_C^{-p})$. This element depends on $[\tilde C]$, and the collection of all images define an affine subspace $A\subset H^1(C,K_C^{-p})$ of dimension $3g-3$ (\cite[Lemma 3.2, 3.8]{LSYZ}). On the other hand, $K\subset H^1(C,K_C^{-p})$ is a certain cone of complementary dimension viz. $(p-1)(2g-2)$. By construction, the closed subset $A\cap K$ consists of those isomorphism classes of $W_2(k)$-liftings $\tilde C$ such that the maximal destabilzer of $C^{-1}_{C\subset \tilde C}(E_{unif},\theta_{unif})$ is of degree $g-1$ (\cite[Proposition 3.6]{LSYZ}). Moreover, the whole construction can be extended to a stable curve $[C]\in 
	\overline{\sM}_g(k)$ and even a relative stable curve over certain affine base (\cite[\S4.1]{LSYZ}). A crucial result is that, for a \emph{totally degenerate} stable curve, the set $A\cap K$ consists of one point (\cite[Proposition 4.7 ]{LSYZ}).  
	\begin{lemma}\label{A cap K}
		Let $\tilde C$ be a $W_2(k)$-lifting of $C$ such that $(K_C^{1/2}\oplus K_C^{-1/2},id)$ is one-periodic, where $id: K_C^{1/2}\to K^{-1/2}_{C}\otimes K_C$ is the tautological isomorphism. Then $[\tilde C]\in A\cap K$. 
	\end{lemma}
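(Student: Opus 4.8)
The plan is to unwind one-periodicity into a statement about a distinguished sub-line-bundle of the inverse Cartier transform, and then conclude by an elementary degree count together with the description of $A\cap K$ recalled above. Write $(V,\nabla):=C^{-1}_{C\subset\tilde C}(K_C^{1/2}\oplus K_C^{-1/2},id)$. As recalled in the paragraph preceding the lemma, the underlying bundle $V$ is a rank-two extension of $K_C^{p/2}$ by $K_C^{-p/2}$, so $\deg V=0$; and by the characterization of $A\cap K$ it suffices to show that the maximal destabilizing subbundle of $V$ has degree exactly $g-1$.

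By one-periodicity there is a Hodge filtration $Fil$ on $(V,\nabla)$ with $\Gr_{Fil}(V,\nabla)\cong(K_C^{1/2}\oplus K_C^{-1/2},id)$ as graded Higgs bundles. Since $V$ has rank two and the graded Higgs field $id$ is nonzero, $Fil$ is necessarily a one-step filtration, given by a single sub-line-bundle $L=Fil^1\subset V$, and $\Gr_{Fil}(V,\nabla)=(L\oplus V/L,\bar\theta)$, where the only nonzero component of $\bar\theta$ is the $\sO_C$-linear map $L\to(V/L)\otimes K_C$ induced by $\nabla$; in particular the summand $V/L=\Gr^0$ is a Higgs subbundle. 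Comparing underlying bundles gives $L\oplus V/L\cong K_C^{1/2}\oplus K_C^{-1/2}$. Since $(K_C^{1/2}\oplus K_C^{-1/2},id)$ is a \emph{stable} Higgs bundle of degree $0$ and $V/L$ is a proper Higgs subbundle of it, $\deg(V/L)<0$; by uniqueness of decomposition into line bundles this forces $V/L\cong K_C^{-1/2}$ and $L\cong K_C^{1/2}$, hence $\deg L=g-1$.

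It remains to check that no sub-line-bundle of $V$ has degree greater than $g-1$. If $M\subset V$ is a sub-line-bundle with $\deg M>g-1$, then the composite $M\hookrightarrow V\twoheadrightarrow V/L\cong K_C^{-1/2}$ is a morphism of line bundles with $\deg M>g-1>-(g-1)=\deg(V/L)$, hence it vanishes; thus $M\subseteq L$ and $\deg M\leq\deg L=g-1$, a contradiction. Therefore the first term of the Harder--Narasimhan filtration of $V$ has degree exactly $g-1$, and by the recalled characterization (\cite[Proposition 3.6]{LSYZ}) we conclude $[\tilde C]\in A\cap K$. The argument is short; the one point that must be handled with care is the equality $\deg L=g-1$ --- i.e.\ ruling out that the Hodge sub-line-bundle is the negative root $K_C^{-1/2}$ --- which is precisely where stability of $(K_C^{1/2}\oplus K_C^{-1/2},id)$ (equivalently, the semistability built into the notion of a Hodge filtration) and the hypothesis $g\geq 2$ enter.
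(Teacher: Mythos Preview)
Your proof is correct. Both you and the paper first extract from one-periodicity the short exact sequence $0\to K_C^{1/2}\to V\to K_C^{-1/2}\to 0$, and then argue that the maximal destabilizing sub-line-bundle of $V$ has degree exactly $g-1$. The difference lies in how this last bound is obtained.

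The paper argues intrinsically via the connection: for an arbitrary sub-line-bundle $L\subset V$ it forms the $\sO_C$-linear composite $L\hookrightarrow V\xrightarrow{\nabla}V\otimes K_C\twoheadrightarrow (V/L)\otimes K_C$ and splits into two cases. If this map is nonzero, a degree comparison gives $\deg L\le g-1$. If it vanishes, $L$ is $\nabla$-invariant, hence corresponds under the Cartier transform to a $\theta$-invariant sub-line-bundle $M\subset K_C^{1/2}\oplus K_C^{-1/2}$ with $L\cong M^{\otimes p}$; stability forces $\deg M<0$, so $\deg L<0$. Your route is more elementary: once the exact sequence is in hand, any $M\subset V$ with $\deg M>g-1$ must map to zero in the quotient $K_C^{-1/2}$ for degree reasons, hence lies inside $K_C^{1/2}$, a contradiction. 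This avoids the case split and the appeal to the Cartier correspondence for $\nabla$-invariant subobjects; the paper's argument, by contrast, would still bound the degree of sub-line-bundles even without first identifying the Hodge sub as $K_C^{1/2}$. You are also more explicit than the paper about why the Hodge sub-line-bundle is $K_C^{1/2}$ rather than $K_C^{-1/2}$ (via stability and Krull--Schmidt), a point the paper simply asserts.
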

	\begin{proof}
		Set $(V,\nabla)=C^{-1}_{C\subset \tilde C}(K_C^{1/2}\oplus K_C^{-1/2},id)$. One-periodicity of $(K_C^{1/2}\oplus K_C^{-1/2},id)$ implies that we have the following short exact sequence:
		$$
		0\to K_C^{1/2}\to V\to K_C^{-1/2}\to 0.
		$$
		
		For any subline bundle $L\subset V$, we obtain an $\sO_C$-linear morphism $\theta$ which is the composite of natural morphisms
		$$
		\theta:  L\to V\stackrel{\nabla}{\longrightarrow}V\otimes K_C\to V/L\otimes K_C. 
		$$
		If $\theta\neq 0$, then it follows that $\deg(L)\leq \deg(V/L\otimes K_C)$ and hence $\deg L\leq g-1$. If $\theta=0$, then $L$ is $\nabla$-invariant. Then there is a corresponding $\theta$-invariant subline bundle $M\subset K_C^{1/2}\oplus K_C^{-1/2}$. Clearly $\theta|_{M}=0$. It follows that $L\cong M^{\otimes p}$. By stability, $\deg(M)<0$, which implies that $\deg(L)<0$. Therefore, the maximal destabilizer of $V$ is of degree $g-1$, which is precisely the statement that $[\tilde C]\in A\cap K$.
	\end{proof}
	\begin{proof}[Proof of Proposition \ref{non-periodicity}]
		Let $(\sC, \sE_{unif},\Theta_{unif})/S$ be a spreading-out of $(C,E_{unif},\theta_{unif})$. As $C$ is generic, the associated moduli map $S\to \sM_{g}$ to the moduli stack of genus $g$ over $\Z$ is dominant. Shrinking $S$ if necessary, we may assume that moduli map factors as 
		$$
		S\stackrel{\phi}{\longrightarrow} U\hookrightarrow \sM_g,
		$$
		where $\phi$ is smooth surjective, and $U$ is a nonempty open subset of $\sM_g$. Let $p$ be a prime number such that $U_p:=U\times \bar \F_p\neq \emptyset$. We claim that there exists a nonempty open subset $V_p\subset U_p$ such that for any $[C]\in V_p$, the $A\cap K$ discussed above consists of finitely many points. The claim concludes the proof: Take any $s\in S$ such that $[C]=\phi(s)\in V_p$ (that is, $\sC_s\cong C$). By Lemma \ref{A cap K} and the paragraph before it, there exists a $W_2(\bar \F_p)$-lifting $\tilde C$ such that $(K_C^{1/2}\oplus K_C^{-1/2},id)$ is not one-periodic with respect to it. As $\phi$ is smooth, there exsits some $W_2(\bar \F_p)$-lifting $\tilde s\to S$ of $s$ such that $\sC_{\tilde s}\cong \tilde C$. It follows that $(\sE_{unif},\Theta_{unif})_s$ is not one-periodic with respect to $\tilde s$. This proves that $(E_{unif},\theta_{unif})$ is not one-periodic, as desired.
		
		To show the claim, we modify the degeneration argument in \cite[Proposition 4.9]{LSYZ}. First, we take a one-dimensional local deformation $f: \sC_1\to B_1$ with $f^{-1}(0)$ a totally degenerate curve and $f^{-1}(b_1)$ smooth for any $b_1\neq 0$. Associated to $f$, we have the relative $\sA$ and $\sK$ inside $\sV=R^1f_*(F^*\sT_{\sC_1/B_1})$ (where $F$ is the relative Frobenius). Consider the composite
		$$
		\pi_f: \sA\cap \sK\subset \sV\to B_1.
		$$
		Since the fiber $\pi_f^{-1}(0)$ consists of only one point, it follows by semicontinuity that there exists $b_1\neq 0$ such that $\pi^{-1}(b_1)$ consists of finitely many points. Second, we take a local deformation $g: \sC_2\to B_2$ with $b_1\in B_2$ and $B_2$ a nonempty affine open subset of $\sM_g\times \bar \F_p$. Now we apply the semicontinuity to the morphism
		$$
		\pi_g: \sA\cap \sK\to B_2
		$$
		with $\pi_g^{-1}(b_1)$ consists of finitely many points, and obtain that there exists a nonempty open subset of $B_2$ over which closed fibers of $\pi_g$ are finitely many points. The claim follows.
	\end{proof}

	\subsection{Motivicity and modular embeddings}\label{section:CS}
	Let $C$ be a smooth projective hyperbolic curve. The uniformization theorem asserts 
	$$
	C^{an}\cong \Gamma\backslash \HH,
	$$
	where $\Gamma\subset \PSL_2(\R)$ is a torsion free lattice. Such a uniformization induces an injective group homomorphism $\alpha: \pi_1(C^{an})\to \PSL_2(\R)$, where $\PSL_2(\R)$ is considered as the group of biholomorphisms of $\HH$. A \emph{theta characteristic} is a lift $\tilde \alpha: \pi_1(C^{an})\to \SL_2(\R)$ so that its composition with $\SL_2(\R)\to \PSL_2(\R)$ is $\alpha$. Theta characteristics exist, and there are $2^{2g}$ theta characteristics in total. Let $\tilde \Gamma$ be the image of $\tilde \alpha$. $\Gamma$ (or $\tilde \Gamma$) is an \emph{arithmetic Fuchsian group} if $\tilde \Gamma$ is commensurable with $\sO_D^{N=1}$, where $D$ is a quaternion (division) algebra over a totally real field $F$ which is split at only one real place of $F$, $\sO_D$ is a maximal order, and the superscript $N=1$ denotes reduced norm 1. The resulting curve is called a \emph{connected Shimura curve}, which we shall study in more detail in next subsection. Motivated by a question in transcendental number theory, Cohen-Wolfart \cite{Wolf} introduced the notion of a \emph{modular embedding} for a cofinite (not necessarily cocompact) Fuchsian group. Modular embeddings generalize the notion of an arithmetic Fuchsian group, hence the notion of a modular/Shimura curve in a natural way. Let us recall its definition. 
	
	As above, let $F$ be a totally real field of degree $d$ over $\Q$ and let $D$ be a quaternion algebra over $F$. We assume that $D$ is split at at least one real place of $F$, and we index the embeddings $\tau_i\colon F\hookrightarrow \R$ so that only the places $\tau_1,\dots,\tau_r$ split $D$. For every $1\leq j \leq r$ we fix an isomorphism 
	$$
	s_j\colon D\otimes_{\tau_j}\R\cong M_{2\times 2}(\R).
	$$
	Fix an order $\sO\subset D$ and let $\sO^1$ be the invertible elements of reduced norm 1. By the isomorphisms we just chose $\sO^1$ embeds as an irreducible arithmetic lattice in $\SL_2(\R)^r$. Let  $\triangle\subset \PSL_2(\R)^r$ be a subgroup commensurable to the image of $\sO^1$ in $\PSL_2(\R)^r$. The following definition is taken from Definition 1.1 \cite{Kuc}.
	\begin{definition}
		Let $\triangle$ be as above. Let $\Gamma\subset \PSL_2(\R)$ be a cofinite Fuchsian subgroup and let $1\leq i\leq r$. A modular embedding for $\Gamma$ with respect to $\tau_i$ is a pair $(\phi, \tilde f)$ such that 
		\begin{itemize}
			\item [(i)] $\phi: \Gamma\to \triangle$ is a group homomorphism such that the composite
			$$
			\Gamma\stackrel{\phi}{\to} \triangle\subset \textrm{PSl}_2(\R)^r \stackrel{pr_i}{\to}  \textrm{PSl}_2(\R)
			$$
			is the identity. 
			\item [(ii)]  $\tilde f: \HH \to \HH^r$ is an equivariant holomorphic map with respect to $\phi$, i.e. satisfying
			$$
			\tilde f(\gamma z)=\phi(\gamma)\tilde f(z), \quad \forall z\in \HH, \gamma\in \Gamma.
			$$
		\end{itemize}
	\end{definition}
	A $\Gamma$ admitting a modular embedding withe respect to $\tau_i$ does not necessarily admit a modular embedding with respect to $\tau_j$ for $j\neq i$. However, the set of \emph{all} $\Gamma$s admitting modular embeddings with respect $\tau_i$ equals the corresponding set with respect to $\tau_j$ (In this sense, \cite[Definition 1.1]{Kuc} is equivalent to \cite[Definition 4]{SM}). Looking into the definition, one may ask the following question: does a subgroup of $\textrm{PSL}_2(\R)$, commensurable to $\Gamma$ which admits a modular embedding, admit also a modular embedding? It is tautologically true for $r=1$ (i.e., the case of compact Shimura curves), but \`a priori unclear for $r\geq 2$. 
	
	Returning to uniformization theory, we say a smooth projective hyperbolic curve $C$ \emph{admits a modular embedding} if the image of $\pi_1(C^{an})$ under $\alpha$ admits a modular embedding with respect to \emph{some} $\tau_i$. This notion is independent of the choice of a uniformization isomorphism for $C^{an}$. We make the following
	\begin{conjecture}\label{conjecture on modular embedding}
		Let $C$ be a smooth projective hyperbolic curve. Then $C$ admits a modular embedding if and only if its uniformizing Higgs bundle is periodic. 
	\end{conjecture}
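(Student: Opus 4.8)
The plan is to split the biconditional into its two implications. The implication ``$C$ admits a modular embedding $\Rightarrow$ $(E_{unif},\theta_{unif})$ is periodic'' is already within reach, and I would simply assemble it from results in hand: by Theorem~\ref{intro_geo_modular} a curve admitting a modular embedding has motivic uniformizing Higgs bundle, and by the Higgs periodicity theorem (\cite[Theorem 1.3]{LS}, \cite[Theorem 5.6]{KS}) every motivic Higgs bundle is periodic. So the genuine content is the converse, and, invoking Theorem~\ref{intro_geo_modular} once more, the converse is equivalent to showing that periodicity of $(E_{unif},\theta_{unif})$ forces its motivicity. In other words I would reduce Conjecture~\ref{conjecture on modular embedding} to the periodic Higgs conjecture applied to the uniformizing Higgs bundle and attack it in that form.

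For the converse, suppose $(E_{unif},\theta_{unif})$ is periodic; it is stable, graded, of degree $0$. First I would pass to the de Rham/Betti side via Simpson's correspondence, obtaining the flat bundle $(V_{unif},\nabla_{unif})$ with its uniformizing variation of Hodge structure, namely the rank-two local system of $\rho_{unif}\colon\pi_1(C^{an})\to\SL_2(\R)$ (up to a theta characteristic). Next I would extract arithmetic information from periodicity: via the Lan--Sheng--Zuo construction of crystalline local systems from periodic Higgs--de Rham flows, a periodic $(E_{unif},\theta_{unif})$ yields, for almost every prime $p$ of a spreading-out, a representation $\pi_1\to\GL_2(\Z_{p^f})$ (projectively into $\PSL_2(\Z_{p^f})$) whose mod-$p$ behaviour is governed by the inverse Cartier transform; the expected, still-conjectural, upshot of the compatibility/arithmetic-Simpson circle of ideas is that these $p$-adic pieces are the reductions of the single object $(V_{unif},\nabla_{unif})$. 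From this I would aim at an integrality statement: the trace field $\Q(\tr\rho_{unif})$ is a number field and $(V_{unif},\nabla_{unif})$ admits a model over a ring of $S$-integers in it. The final step is soft: a rank-two local system on a hyperbolic curve with Fuchsian projective monodromy that is ``arithmetic'' in this sense has Galois conjugates that assemble into a $\phi$-equivariant holomorphic map $\HH\to\HH^r$ intertwining the conjugate representations into $\PSL_2(\R)^r$, which is exactly a modular embedding in the Cohen--Wolfart/Schmutz Schaller--Wolfart/Kucharczyk sense; pulling back along $\tilde f$ the Kuga-type abelian family over the ambient quaternionic Shimura variety then re-derives motivicity, consistently with Theorem~\ref{intro_geo_modular}.

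The hard part will be the integrality step, which is where the whole difficulty of the periodic Higgs conjecture is concentrated. For a \emph{generic} hyperbolic curve the trace field of $\rho_{unif}$ is transcendental, so periodicity must fail there---Proposition~\ref{non-periodicity} already records this for one-periodicity---but proving the full statement asks for precisely the control I am trying to obtain. The obstruction is structural: $\rho_{unif}$ is in general \emph{not} cohomologically rigid, so the known integrality theorems for rigid local systems do not apply, and there is no available geometric analogue of Fontaine--Mazur that would promote a ``crystalline at almost all $p$'' rank-two $\pi_1$-representation of a curve to a motivic one without a rigidity hypothesis. Absent such a principle I would expect to establish Conjecture~\ref{conjecture on modular embedding} first in the cases where motivicity can be checked by hand (rank-one objects, arithmetic Fuchsian/Shimura curves, triangle curves), treating the polystability/semisimplicity conjecture and the arithmetic Simpson correspondence as the natural intermediate milestones; combined with the Chudnovsky--Chudnovsky conjecture mentioned in Remark~\ref{remark on modular embedding}, this would ultimately pin the periodic uniformizing Higgs bundles down to Shimura curves and triangle curves.
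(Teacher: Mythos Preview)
The statement you are addressing is a \emph{conjecture} in the paper, not a theorem; the paper does not claim a proof of both directions. What the paper actually establishes is exactly the forward implication you assemble in your first paragraph: modular embedding $\Rightarrow$ motivic (Theorem~\ref{intro_geo_modular}) together with motivic $\Rightarrow$ periodic (the Higgs periodicity theorem) gives the only-if part. The paper says this explicitly in the sentence following Theorem~\ref{intro_geo_modular}. So on that half your proposal is correct and identical to the paper's argument.

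For the converse, the paper offers no proof; it remarks that the periodic Higgs conjecture (Conjecture~\ref{motivicity conj}) would imply it, and leaves matters there. Your outline is a reasonable heuristic sketch of why one expects the converse to hold, and you are candid that the integrality step is the entire difficulty and remains open. That honesty is appropriate, but you should be clear that what you have written is not a proof attempt in any operative sense: the passage from periodicity to an integral structure on $\rho_{unif}$ (or equivalently to motivicity of $(E_{unif},\theta_{unif})$) is precisely the content of the open periodic Higgs conjecture in this rank, and none of the steps you list---the Lan--Sheng--Zuo crystalline representations, the hoped-for compatibility with $(V_{unif},\nabla_{unif})$, the deduction that the trace field is a number field---are currently available in the required generality. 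In short, your proposal matches the paper on the provable direction and correctly identifies the converse as conditional on an open conjecture; there is nothing to compare against for the converse because the paper does not prove it.
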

	The \emph{periodic Higgs conjecture} of the last section of this article would imply the following: a curve $C$ have a periodic uniformizing Higgs bundle implies that it admits a modular embedding.There is an analogous conjecture for affine hyperbolic curve, but we shall not discuss it here (see the earlier work \cite{LS} on $\P^1$ minus four points). In this subsection, we shall p rove the following result.
	\begin{theorem}\label{intro_geo_modular}
		Let $C$ be a smooth projective hyperbolic curve. Then the following are equivalent.
		\begin{enumerate}
			\item $C$ admits a modular embedding.
			\item $(E_{unif},\theta_{unif})$ over $C$ is motivic.
		\end{enumerate}
	\end{theorem}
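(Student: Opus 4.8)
The plan is to prove the two implications separately, using the Higgs periodicity theorem (motivic $\Rightarrow$ periodic) as a black box on one side and the structure theory of modular embeddings via variations of Hodge structure on the other. For the implication $(2)\Rightarrow(1)$, suppose $(E_{unif},\theta_{unif})$ is motivic, so it is isomorphic to a direct factor of the graded Higgs bundle associated to the $i$-th parabolic Gau\ss-Manin system of some smooth projective family $f\colon X\to U$ with $U\subset C$ open. First I would observe that because $(E_{unif},\theta_{unif})$ has empty parabolic support (it lives on all of $C$), the corresponding direct factor of the parabolic de Rham bundle actually extends to an honest de Rham bundle on $C$, hence to a local subsystem $\mathbb{V}\subset R^if_*\mathbb{C}_X$ on $U$ with unipotent (in fact trivial) local monodromy, underlying a polarized $\mathbb{Z}$-VHS. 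The associated period map $U^{an}\to \Gamma'\backslash \mathcal{D}$ into a period domain, composed with the uniformization $\widetilde{U^{an}}=\mathbb{H}$ (the VHS is rank $2$ of weight $1$ up to Tate twist, so $\mathcal{D}$ is a product of upper half planes $\mathbb{H}^r$ attached to a quaternion algebra by Margulis-type arithmeticity of the monodromy group, or more directly because the Higgs field is the uniformizing one), gives exactly the data $(\phi,\tilde f)$ of a modular embedding: the monodromy representation $\pi_1(U)\to \mathrm{PSL}_2(\mathbb{R})^r$ through $\triangle$ whose first coordinate is the uniformizing $\alpha$, and the equivariant holomorphic period map $\mathbb{H}\to\mathbb{H}^r$. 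The arithmeticity input — that the target lattice is commensurable to an $\mathcal{O}^1$ for a quaternion algebra over a totally real field — comes from the fact that the Higgs field in the first factor is $\theta_{unif}$, which forces the Hodge-theoretic derivative to be a point-wise isomorphism and pins down $\mathcal{D}$ to be of the required quaternionic type.

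For the converse $(1)\Rightarrow(2)$, assume $C$ admits a modular embedding $(\phi,\tilde f)$ with respect to some $\tau_i$. Then I would use $\tilde f\colon \mathbb{H}\to\mathbb{H}^r$ and $\phi\colon \Gamma\to\triangle\subset\mathrm{PSL}_2(\mathbb{R})^r$ to pull back the tautological family over the (compact, since $r=1$ coordinate is cocompact) Shimura curve / quaternionic Shimura variety $\triangle\backslash\mathbb{H}^r$: concretely, the standard rank-$2$ VHS on each $\mathbb{H}$-factor, tensored together and descended, yields a polarizable $\mathbb{Z}$-VHS $\mathbb{W}$ on $C^{an}$ whose $\tau_i$-component is the uniformizing one. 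A theorem of the Cohen–Wolfart / Kucharczyk type guarantees this VHS is motivic, i.e. a direct factor of the cohomology of an algebraic family $X\to C$ (Kuga–Sato type construction over the quaternionic Shimura variety, pulled back along the modular embedding), and hence its graded Higgs bundle is the direct factor realizing $(E_{unif},\theta_{unif})$. One must check the local monodromy at the (possibly empty) cusps is trivial so that one genuinely lands in Definition \ref{motivic}; since $C$ is projective this is vacuous, but the Kuga–Sato family may only be defined over an open $U\subset C$ and one takes the parabolic Gau\ss-Manin system there.

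The main obstacle I expect is the arithmeticity/rigidity step in $(2)\Rightarrow(1)$: extracting from "the Higgs field is $\theta_{unif}$" the conclusion that the monodromy group of the ambient VHS is commensurable with the norm-one units of a quaternion algebra over a totally real field, rather than some a priori larger arithmetic group. Concretely one needs that the $\mathbb{Q}$-Zariski closure of the monodromy is $\mathrm{Res}_{F/\mathbb{Q}}D^\times$ (up to center) for such $(F,D)$; this should follow by combining the Corlette–Simpson / Margulis arithmeticity of the monodromy of a $\mathbb{Z}$-VHS with the fact that the period map factors through $\mathbb{H}$ (a $1$-dimensional Shimura-type subdomain), which cuts the reflex-field and the algebra down to the quaternionic case. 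I would also need to be careful that passing to a direct factor of the Gau\ss-Manin system, together with the $2^{2g}$-fold ambiguity of theta characteristics, does not obstruct matching the factor with $(E_{unif},\theta_{unif})$ on the nose — but this is handled by the remark in the introduction that periodicity/motivicity of $(E_{unif},\theta_{unif})$ is insensitive to tensoring by a two-torsion line bundle, so one only needs the factor to agree up to such a twist. The remaining steps — descent of the VHS along $\phi$, construction of the Kuga–Sato family, and the comparison of graded Higgs bundles — are standard once the arithmetic structure is in hand.
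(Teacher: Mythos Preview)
Your $(1)\Rightarrow(2)$ direction is essentially the paper's argument: pull back a universal abelian scheme from the relevant connected quaternionic Shimura variety (via Deligne's mod\`ele \'etrange when $D$ is a division algebra), invoke Kucharczyk's result that a uniformizing Higgs bundle appears as a direct summand of the resulting Kodaira--Spencer system, and handle the passage through a finite \'etale cover.

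For $(2)\Rightarrow(1)$ your outline has the right shape, but the step you yourself flag as the ``main obstacle'' is a genuine gap as written. Working with the period map and monodromy of the \emph{ambient} Gau\ss--Manin system and hoping that ``Margulis-type arithmeticity'' cuts the Mumford--Tate group down to $\mathrm{Res}_{F/\mathbb{Q}}D^\times$ is too indirect: the ambient VHS can have much larger monodromy, and the observation that ``the period map factors through $\mathbb{H}$'' only constrains the $i$-th coordinate of the putative map $\tilde f\colon\mathbb{H}\to\mathbb{H}^r$; it does not produce the other $r-1$ coordinates, nor the homomorphism $\phi$ landing in an arithmetic lattice of the required quaternionic shape.

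The paper's resolution is to forget the ambient VHS entirely and work directly with the rank-two uniformizing local system $\mathbb{V}$. Motivicity yields two facts about $\mathbb{V}$: it is integral (defined over $\mathcal{O}_A$ for some number field $A$), and every Galois conjugate $\mathbb{V}^\sigma$ underlies a $\mathbb{C}$-VHS. These are precisely the hypotheses of a classification theorem of Corlette--Simpson (their \S 9--10) specific to \emph{rank two}: any such $\mathbb{V}$ is pulled back, via a nonconstant map $f\colon C\to S$, from a tautological local system on a ``polydisk Shimura modular variety'' $S$. The paper then verifies (Appendix, Lemma~A.2) that Corlette--Simpson's polydisk Shimura varieties are exactly connected quaternionic Shimura varieties, so that $f\colon C\to S$ together with $\mathbb{V}\cong f^*\mathbb{L}_i$ \emph{is} the modular embedding. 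The ingredient you should name explicitly is therefore this Corlette--Simpson factorization theorem for integral rank-two local systems whose conjugates are all of Hodge type; it replaces the vague appeal to Margulis and supplies the totally real field $F$, the quaternion algebra $D$, the lattice $\triangle$, and all $r$ coordinates of $(\phi,\tilde f)$ in one stroke.
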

	By the Higgs periodicity theorem, the above theorem implies the only-if part of Conjecture \ref{conjecture on modular embedding}.  
	
	\begin{corollary}\label{commensurability}
		Let $\Gamma\subset \PSL_2(\R)$ be a torsion free cocompact lattice which admits a modular embedding. Then any torsion free cocompact lattice of $\PSL_2(\R)$ which is commensurable to $\Gamma$ also admits a modular embedding. 
	\end{corollary}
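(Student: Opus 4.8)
The plan is to translate the statement, via Theorem~\ref{intro_geo_modular}, into the assertion that motivicity of the uniformizing Higgs bundle does not change when one passes to a commensurable curve, and to prove this invariance by a pull-back/push-forward argument along a common finite \'etale cover. Let $\Gamma\subset\PSL_2(\R)$ be a torsion free cocompact lattice admitting a modular embedding, let $C:=\Gamma\backslash\HH$ be the corresponding smooth projective hyperbolic curve, and let $\Gamma'$ be a torsion free cocompact lattice commensurable to $\Gamma$. Replacing $\Gamma'$ by a conjugate if necessary — which only replaces $\Gamma'\backslash\HH$ by an isomorphic curve — we may assume $\Gamma_0:=\Gamma\cap\Gamma'$ has finite index in both $\Gamma$ and $\Gamma'$. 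Set $C':=\Gamma'\backslash\HH$ and $C_0:=\Gamma_0\backslash\HH$; these are smooth projective hyperbolic curves, and the inclusions of groups give finite \'etale morphisms $C_0\to C$ and $\pi\colon C_0\to C'$. It suffices to show that $(E_{unif},\theta_{unif})$ over $C'$ is motivic, since Theorem~\ref{intro_geo_modular} then says that $C'$ — equivalently $\Gamma'$ — admits a modular embedding.

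First I would use that a modular embedding restricts to finite index subgroups: if $(\phi,\tilde f)$ is a modular embedding for $\Gamma$ with respect to some $\tau_i$, then $(\phi|_{\Gamma_0},\tilde f)$ is a modular embedding for $\Gamma_0$ with respect to the same $\tau_i$, since the three defining conditions — that $\phi$ is a homomorphism, that $pr_i\circ\phi=\id$, and that $\tilde f$ is $\phi$-equivariant — are inherited. Thus $C_0$ admits a modular embedding, and Theorem~\ref{intro_geo_modular} gives that $(E_{unif},\theta_{unif})$ over $C_0$ is motivic. Since $\pi$ is \'etale, $\pi^*K_{C'}=K_{C_0}$, so we may and do choose $K_{C_0}^{1/2}:=\pi^*K_{C'}^{1/2}$; then $(E_{unif},\theta_{unif})_{C_0}=\pi^*(E_{unif},\theta_{unif})_{C'}$. (As for periodicity, motivicity of the uniformizing Higgs bundle is independent of the choice of theta characteristic, so this normalization costs nothing.)

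The heart of the matter is that motivicity descends along $\pi$. By Definition~\ref{motivic}, $\pi^*(E_{unif},\theta_{unif})_{C'}$ is a direct factor of the parabolic graded Higgs bundle associated to the $i$-th parabolic Gau\ss-Manin system of some smooth projective morphism $g\colon Z\to V$, $V\subset C_0$ a nonempty open; shrinking $V$ we may take $V=\pi^{-1}(V')$ for a nonempty open $V'\subset C'$, which does not change the parabolic Gau\ss-Manin system over $C_0$ because the points thereby removed carry trivial local monodromy. Then $g':=(\pi|_V)\circ g\colon Z\to V'$ is again smooth projective, $R^ig'_*\C_Z=\pi_*R^ig_*\C_Z$ since $\pi$ is finite, and — $\pi$ being everywhere \'etale, so that no ramification correction to the parabolic weights occurs — the Deligne canonical extensions and Hodge filtrations are compatible with $\pi_*$; hence the parabolic graded Higgs bundle of the $i$-th parabolic Gau\ss-Manin system of $g'$ over $C'$ is precisely $\pi_*$ of that of $g$ over $C_0$. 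As $\pi_*$ is exact it preserves direct factors, so $\pi_*\pi^*(E_{unif},\theta_{unif})_{C'}$ is a direct factor of it over $C'$; and the projection formula together with the splitting $\pi_*\sO_{C_0}\cong\sO_{C'}\oplus Q$ (valid since $\pi$ is finite \'etale in characteristic zero) give $\pi_*\pi^*(E_{unif},\theta_{unif})_{C'}\cong(E_{unif},\theta_{unif})_{C'}\oplus\bigl((E_{unif})_{C'}\otimes Q,\ \theta_{unif}\otimes 1\bigr)$. Therefore $(E_{unif},\theta_{unif})_{C'}$ is itself a direct factor of a parabolic Gau\ss-Manin system over $C'$, i.e. motivic, and Theorem~\ref{intro_geo_modular} then supplies the modular embedding for $\Gamma'$.

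The step I expect to be the main obstacle is the compatibility claim in the previous paragraph: that forming the $i$-th parabolic Gau\ss-Manin system commutes with composing the structure morphism of the family with a finite \'etale map on the base, so that $\pi_*$ takes the parabolic graded Higgs bundle of $g$ to that of $g'$. Because $\pi$ is unramified there are no fractional weights to keep track of, so this should follow mechanically from the construction of parabolic de Rham bundles in \cite{KS}; still, it is the one point whose verification needs care. The remaining loose ends — wide versus narrow commensurability, and the independence of motivicity from the theta characteristic — are routine.
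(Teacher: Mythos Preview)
Your argument is correct and follows the same overall pull-back/push-forward strategy as the paper: pass to the common finite \'etale cover $C_0=C''$, then push the geometric family down to $C'$. Two points of execution differ. First, at the start you restrict the modular embedding group-theoretically to $\Gamma_0$ and then invoke Theorem~\ref{intro_geo_modular} to produce a family on $C_0$; the paper instead invokes Theorem~\ref{intro_geo_modular} on $C$ and pulls the resulting family back to $C''$. Second, and more consequentially, the paper does not work with the general parabolic Gau\ss--Manin formalism: it uses the footnoted fact that for a curve with a modular embedding one may take the witnessing family to be an abelian scheme over the \emph{entire} curve, so the Kodaira--Spencer system is an honest Higgs bundle on $C$ with no boundary divisor; then the push-forward step is just the projection formula and the polystability of Kodaira--Spencer systems. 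Your version, carrying the parabolic structure through $\pi_*$, is perfectly fine since $\pi$ is \'etale (so no weight shift occurs), and your splitting $\pi_*\sO_{C_0}\cong\sO_{C'}\oplus Q$ gives the direct summand immediately without appealing to polystability --- but the paper's route sidesteps exactly the compatibility you flagged as ``the main obstacle'' by never needing it.
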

	\begin{proof}
		Let $\Gamma'$ be a torsion free cocompact lattice, commensurable to $\Gamma$. Set $\Gamma''$ to be their intersection. Associated to $\Gamma, \Gamma',\Gamma''$, there are three smooth projective hyperbolic curves $C,C',C''$ whose analytifications are isomorphic to $\Gamma\backslash \HH, \Gamma'\backslash \HH, \Gamma''\backslash \HH$ respectively. By construction, there is a correspondence  
		\[\xymatrix{ & C''\ar[dl]_{\pi_C} \ar[dr]^{\pi_{C'}} & \\ C & & C'}\]
		with finite \'etale morphisms $\pi_{C}$ and $\pi_{C'}$. Since $C$ admits a modular embedding, it follows from Theorem \ref{intro_geo_modular} that, there exists a smooth projective morphism $g: X\to C$ such that a uniformizing Higgs bundle of $C$ appears as a direct summand of a Kodaira-Spencer system $(E,\theta)_g$ attached to $g$.\footnote{Indeed, one may choose the family $X\to C$ to be an abelian scheme.} Pulling back $g$ along $\pi_{C}$, we obtain a smooth projective morphism $g'': X''=X\times_{C}C''\to C''$. Since a uniformizing Higgs bundle pulls back to a uniformizing Higgs bundle along a finite \'etale morphism, it is easy to see that a uniformizing Higgs bundle of $C''$ appears in the Kodaira-Spencer system $(E,\theta)_{g''}\cong \pi_{C}^*(E,\theta)_g$ attached to $g''$. By taking a further \'etale morphism of degree two $\widetilde{C''}\to C''$ if necessary, we may assume that that uniformizing Higgs bundle of $C''$ in $(E,\theta)_{g''}$ is the pull-back of a uniformizing Higgs bundle of $C'$ along $\pi_{C'}$. Set $g'=\pi_{C'}\circ g'': X''\to C'$. It is smooth and projective as $\pi_{C'}$ is so. By the projection formula, we see that that uniformizing Higgs bundle of $C'$ appears in the Kodaira-Spencer system $(E,\theta)_{g'}$ attached to $g'$. Since both a uniformizing Higgs bundle and a Kodaira-Spencer system have degree zero, it is a direct summand by the polystability of a Kodaira-Spencer system. The result now follows from Theorem \ref{intro_geo_modular}.
	\end{proof}
	
	To proceed with the proof of Theorem \ref{intro_geo_modular}, we need to interpret modular embeddings in geometric terms. This is exactly what Kucharczyk initiated in his Ph.D thesis \cite{Kuc}. His first main result \cite[Theorem A.1]{Kuc} (see also Proposition 2.8 loc. cit.) provides a key ingredient in the proof of one direction in Theorem \ref{intro_geo_modular}. 
	
	\begin{lemma}\label{modular_characterization} 
		Notation as above. If $C$ admits a modular embedding, then the uniformizing Higgs bundle $(E_{unif},\theta_{unif})$ of $C$ is motivic.		
	\end{lemma}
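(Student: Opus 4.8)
The plan is to convert the modular embedding into a family of abelian varieties over (a finite \'etale cover of) $C$ and then single out $(E_{unif},\theta_{unif})$ as a direct factor of its first Gau\ss-Manin system; the geometric reformulation of a modular embedding due to Kucharczyk (\cite[Theorem A.1, Proposition 2.8]{Kuc}) is exactly the input that makes this possible. Concretely: fix a uniformization $C^{an}\cong\Gamma\backslash\HH$ and a modular embedding $(\phi,\tilde f)$ for $\Gamma$ with respect to some $\tau_i$, noting that $pr_i\circ\phi=\id_\Gamma$ forces $\phi$ to be injective. Pass to a torsion-free finite-index subgroup $\triangle'\subset\triangle$ (a deep enough congruence subgroup), set $\Gamma'=\phi^{-1}(\triangle')$, and let $\pi\colon C'\to C$ be the associated finite \'etale cover, which is again smooth projective hyperbolic with uniformizing Higgs bundle $\pi^*(E_{unif},\theta_{unif})$. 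The quotient $Y:=\triangle'\backslash\HH^{r}$ is a smooth quasi-projective variety carrying the universal polarized abelian scheme $g\colon\sA\to Y$ with multiplication by an order in the quaternion algebra $D$, and $\tilde f$ descends to a holomorphic, hence algebraic (Borel's algebraicity theorem, $C'$ being projective), map $\bar f\colon (C')^{an}\to Y^{an}$. Put $g'\colon\sA':=\bar f^{*}\sA\to C'$; this is a smooth projective morphism defined over all of $C'$.

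Next I would decompose the variation $\mathbb{W}:=R^{1}g_{*}\C$ on $Y$. The $D$-action makes $\mathbb{W}$ a module over $D\otimes_\Q\C$, and using the central idempotents of $F\otimes_\Q\C$ followed by the matrix idempotents of $D\otimes_{F,\tau_i}\C\cong M_{2}(\C)$ one splits off an honest direct summand whose $\tau_i$-component is the tautological rank-two weight-one variation $\mathbb{V}_i$ pulled back from the $i$-th factor of $\HH^{r}$, with Higgs field maximal in the $i$-th cotangent direction of $Y$. Pulling back along $\bar f$, the crux is that $pr_i\circ\tilde f\colon\HH\to\HH$ is holomorphic and equivariant for the \emph{identity} action of $\Gamma'$ on both sides; being non-constant (as $\Gamma'$ is fixed-point free) it must equal $\id_\HH$ by the rigidity of equivariant holomorphic self-maps of the upper half plane. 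Hence $\bar f^{*}\mathbb{V}_i$ is a lift to $\SL_2(\R)$ of the uniformizing $\PSL_2(\R)$-representation of $C'$, i.e. the uniformizing variation of $C'$ attached to a theta characteristic, and its graded Higgs bundle is $(K_{C'}^{1/2}\oplus K_{C'}^{-1/2},\id)$. Since $\mathbb{V}_i$ is a genuine direct factor of $\mathbb{W}$, the uniformizing Higgs bundle of $C'$ is a direct factor of the graded Higgs bundle of the first Gau\ss-Manin system of $g'$ (with $U=C'$, hence empty parabolic support): it is motivic on $C'$.

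Finally I would descend to $C$. The composite $h:=\pi\circ g'\colon\sA'\to C$ is smooth projective, and $R^{1}h_{*}\C=\pi_{*}R^{1}g'_{*}\C$ since $\pi$ is finite; applying the exact functor $\pi_{*}$ to the decomposition on $C'$ shows that $\pi_{*}\pi^{*}(E_{unif},\theta_{unif})$ is a direct factor of the Gau\ss-Manin Higgs bundle of $h$, and the trace splitting $\pi_{*}\sO_{C'}=\sO_{C}\oplus(\ker\tr)$ then exhibits $(E_{unif},\theta_{unif})$ itself as a direct factor. As motivicity is unaffected by the two-torsion ambiguity in the definition of the uniformizing Higgs bundle (pull back along the \'etale double cover trivializing the twist and descend in the same way), this proves that $(E_{unif},\theta_{unif})$ on $C$ is motivic.

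The step I expect to be the main obstacle is the middle one: pinning down, inside $H^{1}$ of the quaternionic abelian family, the rank-two summand that restricts along $\bar f$ to the uniformizing variation of $C'$, and verifying it is a true direct factor rather than merely a subquotient. This is precisely where Kucharczyk's geometric analysis of modular embeddings is indispensable, as it furnishes the algebraic map $\bar f$ to the quaternionic Shimura variety together with the compatibility of $\tilde f$ with the $i$-th projection; once that is in hand, the rigidity $pr_i\circ\tilde f=\id_\HH$ and the endomorphism-algebra decomposition of $\mathbb{W}$ finish the argument.
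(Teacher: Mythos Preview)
Your strategy coincides with the paper's: pass to a finite \'etale cover $C'\to C$, use the modular embedding to produce a morphism from $C'$ to a connected quaternionic Shimura variety, pull back an abelian scheme, locate a uniformizing Higgs bundle of $C'$ as a direct summand of the resulting Kodaira--Spencer system, and then descend to $C$ via push-forward and the trace splitting. Your rigidity argument forcing $pr_i\circ\tilde f=\id_{\HH}$ (the centralizer of a Zariski-dense subgroup of $\PSL_2(\R)$ being trivial) is a pleasant direct substitute for the paper's black-box citation of \cite[Theorem~A.1]{Kuc}.

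There is, however, a genuine gap at the point where you assert that $Y=\triangle'\backslash\HH^{r}$ ``carr[ies] the universal polarized abelian scheme $g\colon\sA\to Y$ with multiplication by an order in the quaternion algebra $D$''. This is only correct when $D$ is split at \emph{every} real place of $F$, i.e.\ when $r=d=[F:\Q]$. When $r<d$---the typical situation for a modular embedding of a compact curve---the Shimura datum $(\Res_{F/\Q}D^{\times},X)$ is \emph{not} of Hodge type, and there is no abelian scheme over $Y$ with $\sO_D$-multiplication at all. The paper addresses this explicitly by splitting into cases and, in the division-algebra case, invoking Deligne's \emph{mod\`ele \'etrange} (Appendix~A): one chooses an auxiliary totally imaginary quadratic extension $E/F$, replaces the original datum by a PEL datum attached to $B=D\otimes_F E$, and obtains a (non-canonical) abelian scheme of relative dimension $4d$ over a variety with the same connected components as $Y$, whose first cohomology \emph{contains} $\bigoplus_{i}\L_i\otimes\C$ as a direct summand rather than being equal to it. Your idempotent decomposition must therefore be carried out in $B\otimes_{\Q}\C$ (equivalently in $E\otimes_{\Q}\C$ followed by the matrix idempotents), which introduces an extra conjugate-pair symmetry but is otherwise harmless. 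Once this correction is made, the remainder of your argument goes through as written.
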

	\begin{proof}
		By definition, there is a totally real field $F$, a quaternion algebra $D$ over $F$, an order $\sO\subset D$, a subgroup $\triangle\subset \PSL_2(\R)^r$, commensurable to the image of $\sO^1$, such that for some $1\leq i_0\leq r$, the uniformizing representation of $\alpha:\pi_1(C^{an})\to \PSL_2(\R)$ factors as
		$$
		\pi_1(C^{an})\to \triangle\to \PSL_2(\R)^r\stackrel{pr_{i_0}}{\to} \PSL_2(\R).
		$$
		Let $\overline{\sO^1}\subset \textrm{PSl}_2(\R)^r$ be the image of $\sO^1$. Let $\tilde \triangle\subset \sO^1$
		be the inverse image of $\triangle\cap \overline{\sO^1}$. For any torsion free subgroup $ \triangle'\subset \tilde \triangle$ of finite index, the quotient analytic space $S':= \triangle'\backslash\HH^r$ is a smooth complex algebraic variety. Recall that $s_j\colon D\otimes_{\tau_j}\R\cong M_{2\times 2}(\R)$. Via the chosen isomorphisms $s_j$s, there are tautological rank two $\R$-local systems $\L_i,1\leq i\leq r$ over $S'$, defined by the following composite map
		$$
		\pi_1(S'^{an})\cong  \triangle'\subset \sO^1\subset \SL_2(\R)^r\stackrel{pr_i}{\to} \SL_2(\R).
		$$
		Each complex local system $\L_i\otimes_{\R}\C$ is an irreducible $\C$-VHS (\cite[\S4]{Sim92}). If $D$ is split viz. $D\cong M_2(F)$, $S'$ is a Hilbert modular variety. It is not difficult to construct a universal family of abelian varieties $g: \sA\to S'$ of dimension $d$ with real multiplication by $F$, whose associated complex monodromy representation is just $\bigoplus_{1\leq i\leq r}\L_i\otimes \C$. If $D$ is a division algebra, one uses the so-called Deligne's mod\`ele \'etrange (see Appendix A). This yields a (non-canonical) family of abelian varieties $g: \sA\to S'$ of dimension 4d with complex multiplication by $E$, a totally imaginary quadratic extension of $F$, whose associated complex monodromy representation contains $\bigoplus_{1\leq i\leq r}\L_i\otimes \C$ as a direct summand. Now that $\triangle'$ is torsion free, it maps injectively onto a subgroup of $\triangle$, which we again denote by by $\triangle'$. As $\alpha$ is injective, we may regard $\pi_1(C^{an})$ as a subgroup of $\triangle$. Note that the subgroup $\pi_1(C^{an})\cap \triangle'\subset \pi_1(C^{an})$ is of finite index. Therefore, by Riemann's existence theorem, this finite index subgroup gives rise to a finite \'etale morphism $\pi: C'\to C$, such that the induced map $\pi_{*}: \pi_1(C'^{an})\to \pi_1(C^{an})$ is the previous inclusion. By construction, we obtain a theta characteristic for $C'$, that is, the composite
		$$
		\pi_1(C'^{an})\to \triangle'\to \SL_2(\R)\to \PSL_2(\R)
		$$    
		is a uniformizing representation of $C'$. Moreover, the modular embedding for $C$ gives rise to a modular embedding for $C'$, which yields a morphism $f': C'\to S'$. By \cite[Theorem A1]{Kuc} (and remarks following the statement in loc. cit.), a uniformization Higgs bundle (equivalently, a graded rank two Higgs bundle with a maximal Higgs field) of $C'$ appears in the Kodaira-Spencer system attached to the pull-back family $\sA\times_{S'}C'\to C'$ via $f'$. Therefore, $(E_{unif},\theta_{unif})$ of $C'$ is motivic. By the argument in the proof of Corollary \ref{commensurability}, $(E_{unif},\theta_{unif})$ of $C'$ is motivic too.  
	\end{proof}

	The next result is derived from the study of rank two local systems over quasi-projective manifolds due to Corlette-Simpson \cite{CS}, especially \S9-\S10 loc. cit. See also \cite[Theorem 5]{Sim92} and its proof.  Our input is to explicate the fact that the polydisk Shimura modular varieties, as introduced by Corlette-Simpson, are connected quaternionic Shimura varieties.  For further details on connected quaternion Shimura varieties, see Appendix A.
	
	\begin{proposition}[Corlette-Simpson]\label{real_simpson}
		Let $\V_{\sO_A}$ be a local system over $C^{an}$ of projective $\sO_A$-module of rank two for an algebraic number field $A\subset \C$. Suppose that $\V:=\V_{\sO_A}\otimes_{A} \C$ is non-unitary and for any embedding $\sigma: A\to \C$, the $\sigma$-conjugate complex local system $\V^{\sigma}=\V_{\sO_A}\otimes_{A,\sigma}\C$ underlies a $\C$-VHS. Then there exists a non-constant morphism $f: C \to S$, where $S=\triangle\backslash \HH^r, r>0$ is a connected quaternionic Shimura variety, such that $\V\cong f^*\L_i\otimes_{\R} \C$ for some $1\leq i\leq r$. 
	\end{proposition}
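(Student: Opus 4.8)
The plan is to deduce the statement from the Corlette--Simpson classification of rank two local systems \cite[\S9--10]{CS} (compare also \cite[Theorem 5]{Sim92} and its proof): the hypotheses here — rank two, integral over $\sO_A$, non-unitary, and \emph{every} Galois conjugate underlying a $\C$-VHS — are precisely the input of their structure theorem, whose output is a non-constant map to one of their ``polydisk Shimura modular varieties''. The part specific to us is then to recognize that target as a \emph{connected quaternionic} Shimura variety (this dictionary is the content of Appendix A), to pin down which tautological rank two factor $\L_i$ gets pulled back, and to check non-constancy.

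The first step is to reduce to the case where $\V$ is irreducible, hence (being non-unitary) has Zariski dense monodromy in $\SL_2(\C)$ after the harmless normalization of the determinant; the reducible case, in which $\V$ is an extension of rank one local systems — necessarily of unitary ones, since a rank one $\C$-VHS on a projective curve is unitary — is handled directly as in \cite{CS}. Next, restriction of scalars turns $\V_{\sO_A}$ into a $\Q$-local system whose monodromy lands, via the embeddings $\sigma\colon A\hookrightarrow\C$, in an arithmetic subgroup of $\prod_\sigma \SL_2(\C)$; for each $\sigma$ the $\C$-VHS structure on $\V^\sigma$ confines the corresponding factor either to $\mathrm{SU}(2)$ (the ``definite'' case, Hodge structure concentrated in one bidegree) or to a conjugate of $\SL_2(\R)$ (the ``indefinite'' case). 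Following \cite[\S9--10]{CS}, the $\Q$-Zariski closure of the monodromy then has adjoint group $\mathrm{Res}_{F/\Q}\,\mathrm{PGL}_{1,D}$ for a totally real field $F$ and a quaternion algebra $D/F$ split exactly at the indefinite real places; let $r\ge 1$ be the number of such places, the bound $r\ge 1$ holding because $\V$ itself, attached to the distinguished inclusion $A\hookrightarrow\C$, is non-unitary and hence indefinite there.

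The $r$ indefinite period maps then assemble into an equivariant holomorphic map $\widetilde C\to\HH^r$ for the induced homomorphism $\pi_1(C^{an})\to\triangle\subset\mathrm{PGL}_2(\R)^r$, with $\triangle$ commensurable to the image of $\sO_D^{1}$, and therefore descend to a morphism $f\colon C\to S:=\triangle\backslash\HH^r$ to the associated connected quaternionic Shimura variety. By the construction of the tautological rank two $\R$-local systems $\L_i$ on $S$ (via the projections $\mathrm{PGL}_2(\R)^r\to\mathrm{PGL}_2(\R)$, lifted to $\SL_2$), the $i$-th of them pulls back under $f$ to the $i$-th conjugate of $\V$; choosing $i$ to be the place corresponding to the given inclusion $A\hookrightarrow\C$ yields $\V\cong f^*\L_i\otimes_\R\C$. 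Finally $f$ is non-constant, for otherwise every $\C$-VHS appearing here, in particular the one on $\V$, would be unitary.

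I expect the genuinely nontrivial input — all of it drawn from \cite[\S9--10]{CS} — to be the two points in the middle paragraph: extracting the precise quaternionic $\Q$-structure of the Zariski closure from the ``all conjugates are $\C$-VHS'' plus integrality hypotheses, and then upgrading the collection of period maps into a single morphism to a \emph{connected} quaternionic Shimura variety rather than merely to an arithmetic quotient or a Deligne--Mumford stack. The remaining obstacles are bookkeeping: the identification, carried out in Appendix A, of Corlette--Simpson's polydisk Shimura modular varieties with connected quaternionic Shimura varieties, and the possible need to pass to a finite étale cover of $C$ (to replace stacks by schemes and to kill torsion), which, exactly as in the proof of Corollary \ref{commensurability}, does not affect the conclusion.
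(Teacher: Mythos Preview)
Your proposal is correct and follows essentially the same route as the paper: invoke the Corlette--Simpson classification \cite[\S9--10]{CS} to produce a map to one of their polydisk Shimura modular varieties, then use Appendix~A (Lemma~\ref{polydisk}) to identify the target as a connected quaternionic Shimura variety and match $\V$ with the pullback of some $\L_i\otimes_\R\C$.

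The one step the paper makes explicit that you absorb into ``extracting the precise quaternionic $\Q$-structure'' is worth flagging. The input to \cite[Theorem~9.2]{CS} is a VHS of type $(P,\Phi)$ with $P$ a projective $\sO_L$-module for a \emph{CM field} $L$, not an arbitrary number field $A$. The paper first uses \cite[Lemma~8.3]{CS} to realize $\V$ inside a weight-one $\Z$-VHS coming from an abelian scheme, then invokes Larsen's lemma \cite[Lemma~4.8]{Sim92} to show $\V$ is in fact defined over a totally imaginary quadratic extension $L$ of a totally real field $F$; Bass--Serre then gives an $\sO_L$-lattice, and \cite[Proposition~10.2, Lemmas~10.3--10.4]{CS} furnish the antisymmetric sesquilinear form $\Phi$. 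Your direct analysis of the $\Q$-Zariski closure of the monodromy is morally equivalent, but as written it presupposes that restriction of scalars from $A$ already yields the quaternionic shape $\mathrm{Res}_{F/\Q}\mathrm{PGL}_{1,D}$, which is exactly what the Larsen step supplies. Once that replacement $A\rightsquigarrow L$ is made, your outline and the paper's agree.
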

	\begin{proof}
		By  \cite[Lemma 8.3]{CS}, $\V$ is a direct factor of the complexified weight one $\Z$-VHS $\W=\V^{\oplus 2}$ associated to a family of abelian varieties over $C$. It follows from a lemma of M. Larsen \cite[Lemma 4.8]{Sim92} that $\V$ is defined over a totally imaginary quadratic extension $L\subset \C$ of a totally real field $F$. That is, there is a rank two $L$-local system $\V_L$ such that $\V_L\otimes \C= \V$.  Since $\V$ comes from $\V_{\sO_A}$, $\V$ is integral. By Bass-Serre theory, there exists a projective $\sO_L$-sub local system $\V_{\sO_L}\subset \V_L$ such that $\V_{\sO_L}\otimes L=\V_L$.  Clearly, for each embedding $\sigma: L\to \C$, $\V^{\sigma}_L=\V_L\otimes _{L,\sigma}\C$ is a direct factor of $\W\otimes \C$, and hence underlies a $\C$-VHS. In other words, we may replace the number field $A$ in the proposition by a totally imaginary field $L$. Fix a base point $x\in C$.  Now by \cite[Proposition 10.2, Lemma 10.3, Lemma 10.4]{CS}, there exists a structure of $\C$-VHS of weight one on $\V^{\sigma}$ for each $\sigma$, and a $\pi_1(C^{an})$-invariant antisymmetric sesquilinear form $\Phi: P\times P \to \sO_L$, where $P=(\V_{\sO_L})_x$, such that $\V_{\sO_L}$ becomes a VHS of type $(P,\Phi)$.  Thus, by \cite[Theorem 9.2]{CS}, there exists a map $f: C \to S$, where $S$ is one of polydisk Shimura modular varieties, such that $\V_{\sO_L}\cong f^*\sV$, where $\sV$ is the tautological VHS of type $(P,\Phi)$ over $S$. If either $r=0$ or $f$ is constant, $\V_{\sO_L}$ would be trivial, contradicting the non-unitarity of $\V$. So $r>0$ and $f$ is non-constant.  Finally, by Lemma \ref{polydisk} and its proof, there exists some tautological rank two $\C$-local system $\M_i$ over $S$ such that $\V\cong f^*\M_i$.  Since $\V$ is non-unitary, $\M_i$ must be non-unitary. Hence $\M_i\cong \L_i\otimes _{\R}\C$ for some tautological rank two $\R$-local system in the proof of Lemma \ref{modular_characterization}.
	\end{proof}
	
	Now we may proceed to the proof of Theorem \ref{intro_geo_modular}.
	\begin{proof}
		After Lemma \ref{modular_characterization}, it remains to show (2) implies (1).  Let $\V$ be a uniformizing local system over $C^{an}$ whose complexification $\V_{\C}$ corresponds to a uniformizing Higgs bundle $(E_{unif},\theta_{unif})$ associated to $C$. Then $\V_{\C}$ is irreducible as $(E_{unif},\theta_{unif})$ is stable. By the motivicity of $(E_{unif},\theta_{unif})$ , the restriction $\V|_{U}$ to some nonempty Zariski open subset is integral, and each of its Galois conjugations underlies a $\C$-VHS. Then $\V$ has the same properties. (Note that a $\C$-VHS structure over a Galois conjugation of $\V|_{U}$ extends uniquely to a $\C$-VHS structure on the corresponding Galois conjugation of $\V$.) By Proposition \ref{real_simpson}, there is a morphism $f: C\to S$, where $S$ is a connected quaternionic Shimura variety, and some $i$, such that $\V_{\C}\cong f^*\L_i\otimes \C$. Therefore, $\V\cong f^*\L_i$ as real local system. It follows that $C$ admits a modular embedding.  
	\end{proof}

	\subsection{Mass formula of Shimura curves}
	In this subsection, we propose a new approach to study Newton stratifications of Shimura varieties of Hodge type. Although our result is restricted only to the curve case, the non-abelian Hodge theoretical part of our method can be generalized to higher dimensinal Shimura varieties in a quite straightforward manner.  In particular, we expect the existence of the canonical periodic Higgs-de Rham flows over Shimura varieties as motivated by the theory of canonical $\C$-VHS of Calabi-Yau type over locally symmetric domains (see \cite{SZ12}).
	
	In this subsection, we work on a detailed description of the canonical periodic Higgs-de Rham flows over Shimura curves. From this canonical flow, we deduce a Hasse-Witt map attached to a Shimura curve; from this latter map we are able to derive a generalization of Deuring's mass formula.

	Let $\sM_0/k$ be a geometrically connected component of the good reduction of a Shimura curve as in Notation \ref{shimura_curve}, with $F$ a totally real field and $\mathfrak p$ a prime of $F$. 
	We will compute the period of $(E_{unif},\theta_{unif})$ over $\sM_0$, and use this periodicity to construct the Hasse-Witt map attached to $\sM_0$. Let $\tilde {\sM_0}$ be the $W_2(k)$-scheme coming from the reduction of the global Shimura curve modulo $\mathfrak p^2$.
	
	\begin{theorem}\label{theorem on periodicity of the rank two Higgs bundle}
		
		Notation as above. Let $f=[F_{\mathfrak p}:\Q_p]$. Then, with respect to the $W_2$-lifting $\tilde {\sM_0}$ of ${\sM_0}$, the Higgs bundle $(E_{unif},\theta_{unif})$ is $f$-periodic. More precisely, we have the following diagram of a periodic Higgs-de Rham flow:
		
		$$
		\xymatrix{
			& (H,\nabla)\ar[dr]^{Gr_{Fil_{HN}}} && (F_{{\sM_0}}^{*f-1}H,\nabla_{can})\ar[dr]^{Gr_{Fil_{HN}}} \\
			(E_0,\theta_0)=(E_{unif},\theta_{unif})\ar[ur]^{C^{-1}} & & \cdots\cdot \ar[ur]^{C^{-1}}&& (E_f,\theta_f),\ar@/^2pc/[llll]^{\stackrel{\psi}{\cong} } }
		$$
		where for $0\leq i\leq f-2$, the bundles $F_{\sM_0}^{*i}H$ are stable (hence the Harder-Narasimhan filtration $Fil_{HN}$ over them are trivial), and $F_{\sM_0}^{*f-1}H$ is unstable (hence the Harder-Narasimhan filtration over it is nontrivial).
		
	\end{theorem}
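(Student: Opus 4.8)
The strategy is to recognise the Higgs--de Rham flow in the statement as the reduction modulo $\mathfrak p$ of the crystalline realisation of the tautological rank-two object on the Shimura curve: once this is set up, periodicity of period dividing $f$ is formal (Faltings' theory), and the content is to compute the precise shape of the flow — which block carries the uniformizing Higgs bundle, why the other $f-1$ grading steps are trivial, and where along the Frobenius cycle semistability is lost.

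First I would start from Kisin's integral canonical model $\scrM$ over $\sO_{F_\mathfrak p}$ (\cite{Ki}), with its universal abelian scheme $\sA$ carrying an action of an order in $D$, and with $\tilde{\sM_0}$ the prescribed $W_2(k)$-lifting coming from reduction modulo $\mathfrak p^2$. Since $\mathfrak p$ is odd, unramified over $\Q$ and prime to $\mathrm{disc}(D)$ one has $D_\mathfrak p\cong M_{2\times 2}(F_\mathfrak p)$, so Morita equivalence extracts from $H^1_{\mathrm{dR}}(\sA/\scrM)_\mathfrak p$ a rank-two bundle $\sW$ over $\sO_{F_\mathfrak p}\otimes_{\Z_p}\sO_\scrM$ with Gauss--Manin connection $\nabla$ and Hodge filtration $\sF\subset\sW$. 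Reducing modulo $\mathfrak p$ and splitting along the $f$ embeddings $\F_q\hookrightarrow k$ gives $\sW_0=\bigoplus_{j\in\Z/f}\sW_{0,j}$, $\nabla_0=\bigoplus_j\nabla_{0,j}$ and $\sF_0=\bigoplus_j\sF_{0,j}$, with each $\sW_{0,j}$ a degree-zero rank-two bundle on $\sM_0$ and the blocks cyclically permuted by Frobenius. By Faltings' theory applied to $\sA$ modulo powers of $\mathfrak p$, $H^1_{\mathrm{cris}}(\sA_0/\sM_0)$, hence its Morita factor $\sW_0$, is a Fontaine--Faltings module relative to $\tilde{\sM_0}$ with an $\F_q$-coefficient structure; by the dictionary between such objects and $f$-periodic Higgs--de Rham flows (\cite{LSZ0},\cite{LS},\cite{KS}) the graded Higgs bundle $\bigoplus_j\mathrm{Gr}_{\sF_{0,j}}(\sW_{0,j})$, with its $f$ summands cyclically permuted by the Frobenius structure, is an $f$-periodic Higgs--de Rham flow whose de Rham terms are the $(\sW_{0,j},\nabla_{0,j})$, with $C^{-1}_{\sM_0\subset\tilde{\sM_0}}$ of the graded of one block isomorphic to the next block and the closing isomorphism provided by the crystalline Frobenius. (Alternatively, $(E_{unif},\theta_{unif})$ over $\sM_0$ is motivic by Lemma \ref{modular_characterization} and hence periodic; the point here is to pin down the period and the precise flow for \emph{this} spreading-out.)

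Next I would read off the shape from the Shimura datum. Because $D$ splits at exactly one real place $\tau$, the Hodge cocharacter defining $\sF$ is a minuscule $\GL_2$-cocharacter in the $\tau$-slot and central in every other slot; $\mathfrak p$-adically this means exactly one of the $f$ conjugate embeddings $\F_q\hookrightarrow k$ — the one attached to $\tau$ — has $\mathrm{rank}\,\sF_{0,j}=1$ with an \emph{isomorphism} Kodaira--Spencer map, while for the remaining $f-1$ embeddings the relevant local system is unitary, so $\sF_{0,j}$ has rank $0$ or $2$ and its Kodaira--Spencer map vanishes. Hence exactly one block of $\bigoplus_j\mathrm{Gr}_{\sF_{0,j}}(\sW_{0,j})$ has nonzero Higgs field, and it is the graded of the uniformizing de Rham bundle, so it is $\cong(E_{unif},\theta_{unif})$ after the two-torsion twist absorbed by the choice of theta characteristic. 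Setting $H:=C^{-1}_{\sM_0\subset\tilde{\sM_0}}(E_{unif},\theta_{unif})$: at each of the $f-1$ unitary blocks the Higgs field of the graded is zero, so $C^{-1}$ of it is a Frobenius pullback with its canonical connection; granting the stability of the intermediate bundles established below, tracing once around the cycle then produces de Rham terms $(H,\nabla),(F_{\sM_0}^{*1}H,\nabla_{can}),\dots,(F_{\sM_0}^{*f-1}H,\nabla_{can})$ — the connection on $H$ has nonzero $p$-curvature, as it must since its Higgs input has maximal Higgs field, whereas the remaining connections are canonical — with the only nontrivial grading step being the last one, $\mathrm{Gr}_{Fil_{HN}}(F_{\sM_0}^{*f-1}H,\nabla_{can})\cong(E_{unif},\theta_{unif})$. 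Since exactly one position along the Frobenius cycle carries the nontrivial Hodge type, the cyclic word of Hodge types is aperiodic, so the period is exactly $f$, and one gets precisely the displayed flow.

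It remains to prove the stability assertions, which I expect to be the crux. That $F_{\sM_0}^{*f-1}H$ is unstable is immediate: it is the block attached to $\tau$, whose Hodge sub-line-bundle is $\cong K_{\sM_0}^{1/2}$ up to torsion, of degree $g-1>0$ inside a degree-zero rank-two bundle, and it is the maximal destabilizer because the grading of this step must be $(E_{unif},\theta_{unif})$. The delicate point is that $F_{\sM_0}^{*i}H$ is \emph{stable} for $0\le i\le f-2$: these are exactly the blocks with trivial Hodge filtration, and I would argue that any destabilizing sub-line-bundle at such a step — being forced by the vanishing $p$-curvature of $\nabla_{can}$, i.e. by Cartier descent, to be a Frobenius pullback — would propagate forward around the cycle and create a second Harder--Narasimhan jump in the $F^f$-crystal $\sW_{0,\bullet}$, hence a second grading step with nonzero Higgs field, contradicting the uniqueness of the non-unitary place $\tau$; combined with the fact that every grading step produces a semistable Higgs bundle, this upgrades semistability to stability and also shows that the Hodge and Harder--Narasimhan filtrations agree at every step, so the crystal flow is the canonical one of the statement. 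Equivalently, one can compute the Newton polygon of $\sW_{0,\bullet}$ at the generic point of $\sM_0$, or feed in the explicit exponential-twisting description of $H$ from \cite{LSZ0} together with the arithmetic position of the Frobenius-lifting obstruction of $\tilde{\sM_0}$. Making either of these rigorous — controlling exactly where along the Frobenius cycle $\sW_{0,\bullet}$ loses semistability — is the main obstacle and the place where the global Shimura structure is indispensable beyond the formal machinery; it is also precisely the computation underlying \cite[Conjecture 1.3]{SZZ}.
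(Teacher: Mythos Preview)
Your overall architecture is right and tracks the paper's: realise $(E_{unif},\theta_{unif})$ as one block of the crystalline Dieudonn\'e structure attached to an abelian scheme on $\scrM$, let the Frobenius-cyclic structure give periodicity of period dividing $f$, and then pin down which block carries the maximal Higgs field and which de Rham terms are (un)stable. But there are two genuine gaps.

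\textbf{The $D$-action is not free.} You begin with ``Kisin's integral canonical model \ldots\ with its universal abelian scheme $\sA$ carrying an action of an order in $D$''. For a quaternionic Shimura curve with $d=[F:\Q]>1$ this is exactly what one does \emph{not} have: the datum $(G,X)$ is of Hodge type but not PEL, so Kisin's abelian scheme carries the quaternionic structure only through Hodge tensors, not as honest endomorphisms, and your Morita step has nothing to bite on. The paper handles this by passing to Deligne's mod\`ele \'etrange: one chooses an imaginary quadratic $\Q(\alpha)$ in which $p$ splits, replaces $D$ by $B=D\otimes_F F(\alpha)$, and obtains a genuinely PEL family $\sU_0\to\sM_0$ with an order $\sO\subset B$ acting by endomorphisms. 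The decomposition is then carried out not via $F$ but via a subfield $L$ with $F\subset L\subset D$ that splits $D$ and in which $\mathfrak p$ is inert; this produces $2f$ rank-two blocks $(E_{\phi},\theta_{\phi})$ cyclically permuted by $\mathrm{Gr}_{F_{hod}}\circ C^{-1}$, with an involution $*$ identifying $(E_{\phi_1^*},\theta_{\phi_1^*})\cong(E_{\phi_1},\theta_{\phi_1})$ after $f$ steps. Your $f$-block picture is the quotient of this by $*$, and is a perfectly good alternative once the endomorphism action is in hand --- but getting that action is the whole point of the \'etrange construction (equivalently, of Carayol's universal height-$2$ $\sO_{\mathfrak p}$-divisible module), and you have skipped it.

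\textbf{The stability step does not follow from Cartier descent.} Your argument that a destabilising sub-line-bundle $L\subset F_{\sM_0}^{*i}H$ is ``forced by the vanishing $p$-curvature of $\nabla_{can}$, i.e.\ by Cartier descent, to be a Frobenius pullback'' is incorrect as stated: Cartier descent applies to $\nabla_{can}$-flat subsheaves, and the Harder--Narasimhan destabiliser need not be flat. The composite $L\hookrightarrow F^{*i}H\xrightarrow{\nabla_{can}}(F^{*i}H/L)\otimes\Omega^1$ vanishes only when $\deg L>g-1$, whereas a destabiliser is merely of positive degree; so the propagation argument breaks down, and with it the contradiction you want. You correctly flag this as ``the main obstacle'', but the paper does not argue abstractly here: it invokes the explicit analysis of these bundles carried out in \cite{SZZ} --- specifically \cite[Proposition 4.4]{SZZ} for the maximal Higgs field at the $\tau$-block, \cite[Proposition 6.6(ii)]{SZZ} for stability of $F_{\sM_0}^{*i}H$ for $0\le i\le f-2$ and instability at $i=f-1$, and \cite[Corollary 7.4]{SZZ} for the identification of the Hodge and Harder--Narasimhan filtrations on $F_{\sM_0}^{*f-1}H$. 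Those results in turn rest on the PEL decomposition afforded by the \'etrange model, so the two gaps are linked.
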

	
	\begin{remark}\label{remark_on_theta}
		
		All of the terms in the periodic Higgs de-Rham flow are essentially determined by the curve $\tilde {\sM_0}$ over $W_2$. Note that $f$ is the first natural number such that the $f\textsuperscript{th}$-iterated Frobenius pullback of $H$ become unstable. Also, since the Higgs bundle $(E_{unif},\theta_{unif})$ is Higgs stable, the isomorphism $\psi$ of graded Higgs bundles is unique up to scalar. Furthermore, for another choice of $(E_{unif},\theta_{unif})$, the diagram will differ by tensoring with a two-torsion line bundle.
		
	\end{remark}
	
	\begin{proof} The periodicity of $(E_{unif},\theta_{unif})$ is an intrinsic property of the Shimura curve (over $W_2$). However, in order to show this property, we need to make use of Deligne's mod\`ele \'etrange. Choose an auxiliary imaginary quadratic extension $\Q(\alpha)$ where $p$ splits. One gets a new Shimura curve of PEL type from $B:=D\otimes_F F(\alpha)$. In particular, there is an abelian scheme $\scrU_0\rightarrow \scrM_0$ over the integral model, which we may reduce modulo $p$ to obtain $\sU_0\rightarrow \sM_0$. We have the following property by \cite[2.6.2 (b)]{Car}: there exists an order $\sO\subset B$, maximal at $\mathfrak p$, such that
		
		$$\sO\hookrightarrow\End_{\sM_0}(\sU_0).$$

		We will use several results from \cite{SZZ}, specifically Corollary 4.2, Proposition 4.3, 4.4 and Theorem 7.3. Notice that neither the assumption $p\geq 2g({\sM_0})$ in Proposition 4.4 nor the assumption $p\geq \max\{2g({\sM_0}),2([F:\Q]+1)\}$ in Theorem 7.3 of \emph{loc. cit} are necessary. This is because of the explicit reconstruction of the inverse Cartier transform of Ogus-Vologodsky in terms of exponential twisting (see \cite{LSZ0}). Let $p\sO_F=\prod_{i}\mathfrak p_i$ be the prime decomposition, with $\mathfrak p=\mathfrak p_1$. Choose an isomorphism $\C\to \bar \Q_p$ such that under the induced identification	
		$$
		\Hom(F,\bar \C)=\coprod_{i}\Hom_{\Q_p}(F_{\mathfrak p_i}, \bar \Q_p),
		$$
		$\tau$ is sent to $\Hom_{\Q_p}(F_{\mathfrak p}, \bar \Q_p)$. The graded Higgs bundle $(E,\theta)$ attached to the family $f_0: \sU_0\to \sM_0$ over $\bar \F_p$ is one-periodic by \cite[Proposition 4.1]{LSZ0}. That is, there is a natural isomorphism of graded Higgs bundles	
		$$
		Gr_{F_{hod}}\circ C^{-1}(E,\theta)\cong (E,\theta),
		$$	
		where $F_{hod}$ denotes for the Hodge filtration on the degree one de Rham bundle attached to $f_0$. The Higgs bundle $(E,\theta)$ is irreducible as an $\sO$-module. Pick a subfield $L\subset D$, containing $F$, that splits $D$ and leaves $\mathfrak p$ inert. Set $\sO':=\sO\cap L\otimes_F F(\alpha)\subset \End_{\sM_0}(\sA_0)$. With respect to $\sO'$ the Higgs bundle decomposes into a direct sum of rank two graded Higgs subbundles:
		$$
		(E,\theta)=\bigoplus_{\phi\in \Hom(L,\bar \C)}(E_{\phi},\theta_{\phi})\oplus (E_{\bar \phi},\theta_{\bar \phi})
		$$	
		by \cite[Proposition 4.1]{SZZ}. We explain the notation. Fix the diagram $L\supset F\subset F(\alpha)$ and an embedding $\Q(\alpha)\hookrightarrow \C$. Then we have a map $\Hom(L,\C)\hookrightarrow \Hom(L\otimes_FF(\alpha),\C)$ given by sending $\alpha$ to the chosen image in $\C$. Given $\phi\in \Hom(L,\C)$, identify it with the induced map $\Hom(L\otimes_F F(\alpha),\C)$; then $\bar \phi$ is the conjugate of $\phi$ with respect to $\alpha$.	
		
		Now, let $\sigma$ denote the absolute Frobenius of the base field $\overline{\F}_p$ and let $*$ denote the involution of $L/F$. Since the inverse Cartier transform $C^{-1}$ is $\sigma$-linear and $Gr_{F_{hod}}$ is linear, the flow operator $Gr_{F_{hod}}\circ C^{-1}$ transforms the $\phi$-factor to the $\sigma(\phi)$-factor. The $\sigma$-orbit of a $\phi_1$ with the property $\phi_{1}|_{F}=\tau$ reads
		
		$$
		\Hom_{\Q_p}(L_{\mathfrak p},\bar \Q_p)=\{\phi_1,\cdots,\phi_f,\phi_1^*,\cdots,\phi_f^*\}.
		$$
		
		It follows from \cite[Proposition 4.3]{SZZ} that
		
		$$
		(Gr_{F_{hod}}\circ C^{-1})^{f}(E_{\phi_1},\theta_{\phi_1})=(E_{\phi_1^*},\theta_{\phi_1^*})\cong (E_{\phi_1},\theta_{\phi_1}).
		$$
		
		By the argument of \cite[Proposition 4.4]{SZZ}, it follows that $(E_{\phi_1},\theta_{\phi_1})$ is of maximal Higgs field and therefore differs from our chosen uniformizing Higgs bundle by a two-torsion line bundle. Therefore $(E_{unif},\theta_{unif})$ initiates an $f$-periodic flow.

		Finally, we describe the terms of this flow intrinsically in terms of $\sM_0$. By the analysis of the Higgs subbundles in \cite[Proposition 4.1]{SZZ}, one knows that the Higgs fields of	
		$$
		(Gr_{F_{hod}}\circ C^{-1})^{i}(E_{\phi_1},\theta_{\phi_1}), 1\leq i\leq f-1,
		$$
		are zero. By \cite[Proposition 6.6 (ii)]{SZZ}, $F_{\sM_0}^{*i}H, 0\leq i\leq f-2$ is stable, and $F_{\sM_0}^{*f-1}H$ becomes unstable. By \cite[Corollary 7.4 ]{SZZ}, the Hodge filtration on $(F_{\sM_0}^{*f-1}H,\nabla_{can})$ is nothing but the Harder-Narasimhan filtration.
		
	\end{proof}

	{\itshape Construction of the Hasse-Witt map:} Set $\sL=K_{\sM_0}^{1/2}$. Applying the functor $C^{-1}$ to the Higgs subbundle $(\sL^{-1},0)\subset (E_{unif},\theta_{unif})$, we obtain $(F_{\sM_0}^*\sL^{-1},\nabla_{can})\subset (H,\nabla)$. In particular, $F_{\sM_0}^*\sL^{-1}\subset H$ and consequently $F_{\sM_0}^{*f}\sL^{-1}\subset F_{\sM_0}^{*f-1}H$. Since $$Gr_{Fil_{HN}}(F_{\sM_0}^{*f-1}H,\nabla_{can})\cong (E_{unif},\theta_{unif}),$$ it follows that
	$$
	0\to \sL\to F_{\sM_0}^{*f-1}H\to \sL^{-1}\to 0
	$$	
	is a short exact sequence. The Hasse-Witt map is defined to be the composite	
	$$
	HW(\sM_0): F_{\sM_0}^{*f}\sL^{-1}\hookrightarrow F_{\sM_0}^{*f-1}H\twoheadrightarrow \sL^{-1}.
	$$
	
	We may also regard $HW(\sM_0)$ as a global section of the line bundle $F_{\sM_0}^{*f}\sL\otimes \sL^{-1}\cong \sL^{p^f-1}$. Since $p$ is odd, the bundle $\sL^{p^f-1}$ is independent of the choice $K_{\sM_0}^{1/2}$ and by Remark \ref{remark_on_theta}, the section $HW({\sM_0})\in \Gamma(\sM_0,\sL^{p^f-1})$ is also independent of this choice. Note also that $p$ being odd means that $HW(\sM_0)$ is a pluricanonical differential form; in other words, it is a modular form modulo $p$. We shall call this natural section the \emph{Hasse-Witt invariant} of ${\sM_0}$.

	At this point, by considering the zero locus of $HW$, we have obtained a canonical divisor of $\mathcal M_0$. Our next goal is to show that the Newton polygon of $\mathcal U_0\rightarrow \mathcal M_0$ is constant over this divisor, and to moreover compute the degree of this divisor.
	
	The second ingredient in our approach to the mass formula is the so-called ``one clump theorem" \cite{Kr}, whose origin is traced back to the work \cite{Mo} of S. Mochizuki.  Its usage here stems from the observation that the Newton jumping locus is invariant under a Hecke correspondence. This phenomenon has been abstracted into the notion of \emph{clump} of an \'etale correspondence without a core. The ``one clump theorem" then implies the Newton jumping locus (under any symplectic embedding) is precisely the zero-locus of the Hasse-Witt map established as above. We first collect (and modify) several results in \cite{Kr} that we will use for the proof of the mass formula.
	
	\begin{definition} Let $X,Y,Z$ be (not necessarily geometrically connected) curves over $k$. A \emph{correspondence of curves over $k$} is a diagram \[\xymatrix{ & Z\ar[dl]_f \ar[dr]^g & \\ X & & Y}\]where $f$ and $g$ are finite, dominant and generically separable.
		
	\end{definition}


	\begin{definition} \label{nocore}A correspondence of geometrically connected curves $X\leftarrow Z\rightarrow Y$ over $k$ \emph{has no core} if $$k\cong k(X)\cap_{k(Z)} k(Y).$$
		
	\end{definition}
	
	\begin{remark}By \cite[Prop. 4.2]{Kr}, the property of ``having a core" is unchanged under arbitrary field extension $L/k$.
		
	\end{remark}

	\begin{definition} \label{clump}Let $X\leftarrow Z\rightarrow Y$ be a correspondence of curves over $k$. A \emph{clump} is a finite set $S\subset Z(\bar{k})$ such that $f^{-1}(f(S))=g^{-1}(g(S))=S$. A clump $S$ is \'etale if $f$ and $g$ are \'etale at all points $s\in S$.
		
	\end{definition}
	
	We cite \cite[Theorem 9.6]{Kr} as follows.
	\begin{theorem} \label{oneclump}  
		Let $X\leftarrow Z\rightarrow Y$ be a correspondence of geometrically connected curves over $k$ without a core. Then there is at most one \'etale clump.
	\end{theorem}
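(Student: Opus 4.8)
The plan is to argue by contradiction. Suppose $X\leftarrow Z\rightarrow Y$ has no core yet admits two distinct étale clumps; I will produce a nonconstant rational function on $Z$ lying in $f^{*}k(X)\cap g^{*}k(Y)$. Such a function generates a subfield of $k(Z)$ strictly larger than $k$ that is pulled back from both sides, hence corresponds to a curve $W$ with dominant maps $X\to W$, $Y\to W$ compatible with $f$ and $g$ — that is, a core — contradicting the hypothesis.

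\emph{Reduction to disjoint clumps.} For a clump $S$ one has $f^{-1}(f(S))=S$, so for any $s\in Z$ the fibre $f^{-1}(f(s))$ is either contained in $S$ or disjoint from $S$, and likewise for $g$. It follows that if $S_1\neq S_2$ are étale clumps, then $S_1\cap S_2$, $S_1\setminus S_2$ and $S_2\setminus S_1$ are again étale clumps; since $S_1\neq S_2$, one of the last two is nonempty, so after renaming we may assume $S_1\cap S_2=\emptyset$. Put $D_i=f(S_i)\subset X$ and $E_i=g(S_i)\subset Y$; these are disjoint reduced divisors on $X$ and on $Y$ respectively.

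\emph{From the clumps to a core.} Since $f,g$ are étale along $S_i$ with $f^{-1}(f(S_i))=S_i=g^{-1}(g(S_i))$, one gets equalities of \emph{reduced} divisors
$$
f^{*}D_i=\sum_{s\in S_i}[s]=g^{*}E_i\qquad(i=1,2)
$$
on $Z$; comparing degrees gives $\deg(f)\,|D_i|=|S_i|=\deg(g)\,|E_i|$, hence the numerical identity $|D_1|\,|E_2|=|D_2|\,|E_1|$. Assume now that the degree-zero divisor $\delta=|D_2|D_1-|D_1|D_2$ on $X$ is \emph{torsion} in $\mathrm{Pic}^{0}(X)$, and choose $a\in k(X)$ with $\mathrm{div}(a)$ a multiple of $\delta$. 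Pulling back via the displayed equalities and using the numerical identity, $\mathrm{div}(f^{*}a)$ is $g^{*}$ applied to a multiple of the degree-zero divisor $|D_2|E_1-|D_1|E_2$ on $Y$; the latter is torsion in $\mathrm{Pic}^{0}(Y)$, because its $g^{*}$-pullback is the torsion class $f^{*}\delta$ and $\ker\big(g^{*}\colon\mathrm{Pic}^{0}Y\to\mathrm{Pic}^{0}Z\big)$ is finite (as $g_{*}g^{*}$ is multiplication by $\deg g$). Hence there is $b\in k(Y)$ with $\mathrm{div}(f^{*}a)=\mathrm{div}(g^{*}b)$, so $f^{*}a=c\,g^{*}b$ for some $c\in k^{\times}$. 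Since $D_1,D_2$ are nonempty and disjoint, $a$ is nonconstant, and therefore $f^{*}a$ is a nonconstant element of $f^{*}k(X)\cap g^{*}k(Y)$ — a core.

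\emph{The main obstacle: torsion of $\delta$.} When $k=\overline{\mathbb{F}}_p$, which is the case relevant to Theorem \ref{main_result}, the class $\delta$ is automatically torsion: $D_1,D_2$ are defined over a finite field $\mathbb{F}_q$ and $\mathrm{Pic}^{0}(X)(\mathbb{F}_q)$ is finite, so the argument above closes at once. Over a general field, showing that $\delta$ is torsion is exactly the point where the no-core hypothesis must be used, and it is the crux of the proof. The plan there is to iterate the correspondence: the induced self-correspondence of $X$ preserves the finite set $D_1\cup D_2$, so all the divisors it produces are supported on a fixed finite set $O\subset X$, and the degree-zero classes they span lie in the finitely generated subgroup $\Lambda\subset\mathrm{Pic}^{0}(X)$ generated by the differences $[x]-[x']$ ($x,x'\in O$), which is stable under the induced Hecke operator $T$. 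One then needs that a $T$-stable finitely generated subgroup is necessarily torsion — a finiteness statement that should fall out of the structure theory of no-core correspondences developed in the earlier sections of \cite{Kr}, the relevant feature being that $T$ acts on $\mathrm{Pic}^{0}(X)$ with no part compatible with a common quotient. Granting this, $\delta$ is torsion, the construction above produces a core, and we reach the desired contradiction. The combinatorial reductions and the divisor bookkeeping are routine; this Picard-theoretic finiteness is where the real work lies.
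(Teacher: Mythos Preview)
The paper does not prove this theorem; it cites \cite[Theorem 9.6]{Kr} and indicates that the argument runs through the theory of \emph{invariant line bundles}, specifically \cite[Proposition 8.2, Corollary 8.10]{Kr}: on a no-core correspondence every invariant line bundle $\scrL$ has $h^0(\scrL)\leq 1$, and any two of positive degree are commensurable. An \'etale clump $S$ furnishes an invariant line bundle $\scrL_S=(\sO_X(f(S)),\sO_Y(g(S)),\phi)$ together with a canonical invariant section; two disjoint \'etale clumps then give, after passing to a common power via commensurability, two linearly independent invariant sections of a single invariant line bundle, contradicting $h^0\leq 1$.

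Your route is different in packaging but closely related: instead of invariant sections you seek a nonconstant element of $f^*k(X)\cap g^*k(Y)$ directly. The reduction to disjoint clumps and the divisor bookkeeping are correct, and the implication ``$\delta$ torsion $\Rightarrow$ core'' is clean. Over $k=\overline{\F}_p$ --- the only case invoked later in the paper --- your argument is complete and valid. For general $k$, however, the torsion of $\delta=|D_2|D_1-|D_1|D_2$ is a genuine gap, which you yourself flag. That torsion is \emph{equivalent} to the commensurability $\scrL_{S_1}^m\cong\scrL_{S_2}^n$ of \cite[Corollary 8.10]{Kr} (restricted to the $X$-component), so what you have done is rediscover the reduction to that statement without supplying an independent proof of it. Your sketched plan --- an induced self-correspondence of $X$ preserving $D_1\cup D_2$, a Hecke-stable finitely generated $\Lambda\subset\mathrm{Pic}^0(X)$, then ``$T$-stable and finitely generated implies torsion'' --- is in the right spirit (the proof in \cite{Kr} does exploit the correspondence action on Picard groups), but as written it is not a proof: you have not identified which property of the Hecke operator forces $\Lambda$ to be torsion, and that is exactly where the no-core hypothesis and the $h^0\leq 1$ bound do the work in \cite{Kr}.
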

	
	Theorem \ref{oneclump} is a main input in proving Theorem \ref{main_result}. We will briefly introduce the main concepts that are relevant to the ``one clump theorem".
	
	\begin{definition}Let \[\xymatrix{ & Z\ar[dl]_f \ar[dr]^g & \\ X & & Y}\] be a correspondence of geometrically connected curves over $k$. An \emph{invariant line bundle} $\scrL$ is a triple $(\sL_X,\sL_Y,\phi)$ where $\sL_X$ is a line bundle on $X$, $\sL_Y$ is a line bundle on $Y$, and $\phi:f^*\sL_X\rightarrow g^*\sL_Y$ is an isomorphism of line bundles on $Z$. An \emph{invariant section} of $\scrL$ is a pair $(s_X,s_Y)\in H^0(X,\sL_X)\oplus H^0(Y,\sL_Y)$ such that $f^*s_X=\phi^*g^*s_Y$. The space of invariant sections is denoted by $H^0(\scrL)$ and its dimension is denoted by $h^0(\scrL)$.
		
	\end{definition}
	
	Then the one clump theorem essentially follows from the following theorem, which is \cite[Proposition 8.2, Corollary 8.10]{Kr}:
	
	\begin{theorem}Let $X\leftarrow Z\rightarrow Y$ be a correspondence of curves over $k$ without a core. Let $\scrL$ be an invariant line bundle. Then $h^0(\scrL)\leq 1$. If $\scrL$ and $\scrM$ are invariant line bundles of positive degree, then there exists positive integers $m,n$ such that $$\scrL^{\otimes m}\cong \scrM^{\otimes n}$$
		
	\end{theorem}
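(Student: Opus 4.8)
For the first assertion, $h^0(\scrL)\le 1$, the plan is a ratio argument. As $H^0(\scrL)$ is a $k$-linear subspace of $H^0(X,\sL_X)\oplus H^0(Y,\sL_Y)$, it is enough to prove that any two nonzero invariant sections are proportional. First observe that a nonzero invariant section $(s_X,s_Y)$ has both components nonzero: $f$ and $g$ are finite and dominant, hence surjective, so pullback of global sections along $f$ and $g$ is injective, and therefore $s_X=0$ would force $g^*s_Y=\phi(f^*s_X)=0$, hence $s_Y=0$, and symmetrically. Now take two nonzero invariant sections $(s_X,s_Y)$ and $(t_X,t_Y)$. Then $s_X/t_X\in k(X)^\times$ and $s_Y/t_Y\in k(Y)^\times$, and since $\phi$ is an isomorphism of line bundles it carries ratios of sections to ratios of sections; so from $\phi(f^*s_X)=g^*s_Y$ and $\phi(f^*t_X)=g^*t_Y$ we get, as rational functions on $Z$, $f^*(s_X/t_X)=\frac{f^*s_X}{f^*t_X}=\frac{g^*s_Y}{g^*t_Y}=g^*(s_Y/t_Y)$. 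Hence $f^*(s_X/t_X)$ lies in $f^*k(X)\cap g^*k(Y)$ inside $k(Z)$, which is $k$ by the no-core hypothesis. Since $f^*$ is injective with $f^*k=k$, the function $s_X/t_X$ equals some constant $c\in k^\times$, so $s_X=ct_X$; applying $\phi\circ f^*$ and using injectivity of $g^*$ gives $s_Y=ct_Y$. Thus $(s_X,s_Y)=c\,(t_X,t_Y)$, proving $h^0(\scrL)\le 1$.

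For the commensurability statement, I would begin with degree bookkeeping. From $f^*\sL_X\cong g^*\sL_Y$ one gets $\deg f\cdot\deg\sL_X=\deg g\cdot\deg\sL_Y$, and likewise for $\scrM$; in particular $\scrL$ and $\scrM$ have positive degree on $Y$ too, and putting $a=\deg\sM_X$, $b=\deg\sL_X$ the invariant line bundles $\scrL^{\otimes a}$ and $\scrM^{\otimes b}$ have the same degree on $X$, hence --- by the same relation --- on $Y$ and on $Z$. So $\mathscr{N}:=\scrL^{\otimes a}\otimes(\scrM^{\otimes b})^{\vee}$ is an invariant line bundle with $\deg\sN_X=\deg\sN_Y=0$, and it suffices to show that every such $\mathscr{N}$ is torsion: if $\sN_X^{\otimes t}\cong\sO_X$, then $\sN_Y$ is automatically torsion (because $\deg g\cdot\sN_Y=g_*f^*\sN_X$ in $\mathrm{Pic}^0(Y)$), and the resulting trivialization of a power $\mathscr{N}^{\otimes t'}$ as a pair of line bundles automatically respects $\phi$, since $\mathrm{Aut}(\sO_Z)=k^\times$ as $Z$ is geometrically connected; one then gets $\scrL^{\otimes at'}\cong\scrM^{\otimes bt'}$, so $m=at'$ and $n=bt'$ work. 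If $k=\overline{\F}_p$ this already concludes the proof, since $\mathrm{Pic}^0(X)(\overline{\F}_p)$ is a union of finite groups, hence torsion.

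The remaining input --- that an invariant line bundle $\mathscr{N}$ which is numerically trivial on $X$ and $Y$ is torsion over an arbitrary base --- is the hard part, and is where the no-core hypothesis is used essentially. Passing to Picard varieties, $\sN_X$ is torsion as soon as $\xi:=f^*\sN_X=g^*\sN_Y$ is torsion in $\mathrm{Pic}^0(Z)$; since $\xi$ lies in $f^*\mathrm{Pic}^0(X)\cap g^*\mathrm{Pic}^0(Y)$ and the kernels of $f^*,g^*$ are finite, it is enough to show that the abelian subvariety $f^*\mathrm{Jac}(X)\cap g^*\mathrm{Jac}(Y)$ of $\mathrm{Jac}(Z)$ is trivial. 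Using that $(f^*)^{\dagger}=f_*$ and $(g^*)^{\dagger}=g_*$ for the theta polarizations, one checks that this intersection is nonzero precisely when the self-adjoint positive operator $\tfrac{1}{\deg f\,\deg g}\,f_*g^*g_*f^*$ on $\mathrm{Jac}(X)$ has eigenvalue $1$, which by positivity of the Rosati form is its largest possible eigenvalue; so the task is to exclude this extremal eigenvalue. My plan is to transport this to the composite self-correspondence $X\leftarrow Z\times_Y Z\to X$: after removing the diagonal component (which contributes only the identity correspondence), the remaining self-correspondence still has no core, and a nonzero extremal-eigenvalue subspace would be stable under it and hence would have to descend to the core of that self-correspondence --- contradicting $k(X)\cap_{k(Z)}k(Y)=k$. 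Making this descent precise is the main obstacle; it rests on the structural analysis of correspondences without a core in \cite{Kr}. Over $\overline{\F}_p$, as noted above, the issue is vacuous.
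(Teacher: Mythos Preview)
The paper does not prove this theorem: it is quoted verbatim from \cite[Proposition~8.2, Corollary~8.10]{Kr} and used as a black box. So there is no ``paper's own proof'' to compare against; what follows evaluates your argument on its own terms.

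Your proof of $h^0(\scrL)\le 1$ is complete and correct. The ratio argument is exactly the right idea: two nonzero invariant sections give a rational function on $Z$ lying in $f^*k(X)\cap g^*k(Y)=k$, forcing proportionality. This is almost certainly the argument of \cite[Proposition~8.2]{Kr} as well.

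For the second assertion, your reduction is sound. The degree bookkeeping is right, and reducing to ``every degree-zero invariant line bundle $\mathscr{N}$ is torsion'' is the natural move; your verification that torsion on one side propagates to the other via $g_*f^*$, and that the resulting trivializations can be made compatible with $\phi$ (using $H^0(Z,\sO_Z)=k$), is correct. Your observation that over $k=\overline{\F}_p$ the conclusion is immediate because $\mathrm{Pic}^0(X)(\overline{\F}_p)$ is torsion is valid, and worth stressing: this is the only case the present paper actually uses (all applications are to reductions of Hecke correspondences modulo $p$).

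The gap is the general case. Your reformulation --- that it suffices to show the abelian subvariety $f^*\mathrm{Jac}(X)\cap g^*\mathrm{Jac}(Y)\subset \mathrm{Jac}(Z)$ is zero --- is a correct equivalence (any nonzero point of that intersection yields a degree-zero invariant line bundle, and conversely), and rewriting this as ``$\frac{1}{\deg f\,\deg g}f_*g^*g_*f^*$ does not have eigenvalue $1$'' is also fine, since $T:=f_*g^*g_*f^*=(g_*f^*)^{\dagger}(g_*f^*)$ is Rosati-positive. But the step where you ``exclude the extremal eigenvalue'' is not an argument: you pass to $Z\times_Y Z\to X$, assert that the off-diagonal piece still has no core, and then claim a putative eigenspace would ``descend to the core''. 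Neither the no-core claim for the iterated correspondence nor the descent mechanism is justified, and it is not clear that an eigenspace for a Hecke-type operator on $\mathrm{Jac}(X)$ has any reason to come from a common subfield of function fields. You yourself flag this as ``the main obstacle'', and I agree it is one: as written, the general-field case is a heuristic, not a proof. If you want to close this, you should consult \cite[\S8]{Kr} directly rather than trying to reconstruct the argument; the structural input there (on the Picard group of invariant line bundles for a correspondence without a core) is what is doing the work.
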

	
	The utility of this concept is the following: the Hasse-Witt map $HW(\sM_0)$ yields an \emph{invariant pluricanonical differential form} on the good reduction of a Hecke corresondence of Shimura curves. We will need several auxiliary results about \'etale correspondences without a core.
	
	\begin{lemma}\label{finitegaloiscore}Let $X\leftarrow Z\rightarrow Y$ be an \'etale correspondence of geometrically connected curves over $k$, where $Z$ is \emph{hyperbolic}. Then the following are equivalent:
		
		\begin{enumerate}
			
			\item $X\leftarrow Z\rightarrow Y$ has a core.
			
			\item There exists a finite extension $l/k$, a geometrically connected curve $W/l$, and a finite \'etale map $W\rightarrow Z_l$ such that the induced maps $W\rightarrow X_l$ and $W\rightarrow Y_l$ are Galois.
			
		\end{enumerate}
		
	\end{lemma}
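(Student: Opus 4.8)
The plan is to prove the two implications separately, in each case first replacing the base field by a convenient one and then invoking the Remark (\cite[Prop.~4.2]{Kr}) that having a core is insensitive to field extension. For $(1)\Rightarrow(2)$ I would work with \'etale fundamental groups after base change to an algebraic closure $\bar k$; for $(2)\Rightarrow(1)$ I would argue by Galois theory over the given finite extension $l$, where the hyperbolicity of $Z$ is what forces the relevant automorphism group to be finite.

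For $(1)\Rightarrow(2)$: base changing to $\bar k$, the correspondence still has a core $W_0$, and — because $f,g$ are finite \'etale and $Z$ is hyperbolic — the core maps $X_{\bar k}\to W_0$ and $Y_{\bar k}\to W_0$ are themselves finite \'etale, so that after choosing compatible geometric points $\pi_1(Z_{\bar k})\subseteq\pi_1(X_{\bar k})$ and $\pi_1(Z_{\bar k})\subseteq\pi_1(Y_{\bar k})$ become nested open subgroups of $G:=\pi_1(W_0)$ with $\pi_1(Z_{\bar k})$ of finite index in $G$. I would then set $N:=\bigcap_{\gamma\in G}\gamma\,\pi_1(Z_{\bar k})\,\gamma^{-1}$; since a finite-index subgroup has only finitely many conjugates, $N$ is an open normal subgroup of $G$ contained in $\pi_1(Z_{\bar k})$. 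The connected finite \'etale cover $W_{\bar k}\to W_0$ corresponding to $N$ — that is, the Galois closure of $Z_{\bar k}\to W_0$ — is then finite \'etale over $Z_{\bar k}$ (as $N\subseteq\pi_1(Z_{\bar k})$) and finite \'etale Galois over $X_{\bar k}$ and over $Y_{\bar k}$ (as $N$ is normal in $\pi_1(X_{\bar k})$ and in $\pi_1(Y_{\bar k})$), with Galois groups $\pi_1(X_{\bar k})/N$ and $\pi_1(Y_{\bar k})/N$. A standard limit argument then descends $W_{\bar k}$, its three maps, and the Galois property to a geometrically connected curve $W$ over some finite extension $l/k$, which is the object required in $(2)$.

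For $(2)\Rightarrow(1)$: working over $l$, set $M:=k(W)$, so that $k(X_l)\subseteq k(Z_l)\subseteq M$ and $k(Y_l)\subseteq k(Z_l)\subseteq M$, with $M/k(X_l)$ and $M/k(Y_l)$ finite Galois because $W\to X_l$ and $W\to Y_l$ are finite \'etale Galois. Since $W\to Z_l$ is finite \'etale and $Z$ is hyperbolic, $W$ is hyperbolic, hence $\mathrm{Aut}(M/l)$ is finite; let $H\subseteq\mathrm{Aut}(M/l)$ be the subgroup generated by $\mathrm{Gal}(M/k(X_l))$ and $\mathrm{Gal}(M/k(Y_l))$. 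By Artin's theorem $M/M^H$ is finite Galois, so $M^H$ has transcendence degree one over $l$; and $M^H=k(X_l)\cap k(Y_l)=k(X_l)\cap_{k(Z_l)}k(Y_l)$, the last equality because $k(X_l)$ and $k(Y_l)$ already lie in $k(Z_l)$. Thus $X_l\leftarrow Z_l\to Y_l$ has a core, and by the Remark so does $X\leftarrow Z\to Y$.

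The step I expect to be the main obstacle is the input used in $(1)\Rightarrow(2)$ that for an \'etale correspondence with a core the core maps $X\to W_0$ and $Y\to W_0$ are finite \'etale — equivalently, that the function-field core $k(X)\cap_{k(Z)}k(Y)$ coincides with the maximal common \'etale quotient of the correspondence; this is precisely where the \'etaleness of $f,g$ and the hyperbolicity of $Z$ are genuinely used in that direction, and I would either cite it from \cite{Kr} or reprove it by a local analysis of $X\times_{W_0}Y$ at the ramification of a putative core map. The descent from $\bar k$ to a finite $l$ at the end of $(1)\Rightarrow(2)$ is routine but should be spelled out, since finite \'etale morphisms, geometric connectedness, and the Galois property all descend to a sufficiently large finite subextension of $\bar k/k$.
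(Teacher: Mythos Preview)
Your argument for $(2)\Rightarrow(1)$ is correct and in fact unpacks what the paper simply imports from \cite[Lemma~4.2]{Kr}; the use of hyperbolicity of $W$ to force $\mathrm{Aut}(M/l)$ finite, and hence $M^{H}$ of transcendence degree one, is exactly the right mechanism.

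Your $(1)\Rightarrow(2)$, however, has a genuine gap at precisely the point you flag: the claim that the core maps $X_{\bar k}\to W_0$ and $Y_{\bar k}\to W_0$ are \'etale is \emph{false} in general, so neither citing \cite{Kr} nor a local analysis will rescue it. Here is a counterexample. Let $W_0/\bar k$ be a curve of genus $\geq 1$, pick $w_1,w_2\in W_0$, and let $X\to W_0$ and $Y\to W_0$ be two \emph{distinct} double covers branched exactly at $\{w_1,w_2\}$ (these exist since $\mathrm{Pic}(W_0)[2]\neq 0$). Writing $k(X)=k(W_0)(\sqrt f)$, $k(Y)=k(W_0)(\sqrt g)$, the compositum $k(Z)=k(W_0)(\sqrt f,\sqrt g)$ is Galois over $k(W_0)$ with group $(\Z/2)^2$, and one checks directly that $Z\to X$ and $Z\to Y$ are \emph{\'etale} (the pullbacks of $f,g$ to $X,Y$ acquire even order at the ramification points), while $k(X)\cap_{k(Z)}k(Y)=k(W_0)$. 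Thus $X\leftarrow Z\to Y$ is an \'etale correspondence of hyperbolic curves whose function-field core is $W_0$, yet $X\to W_0$ and $Y\to W_0$ are ramified. In particular the function-field core need not coincide with a maximal common \'etale quotient, and your $\pi_1$ argument cannot get started.

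The paper sidesteps this entirely. It first invokes \cite[Lemma~4.2]{Kr} to produce, over some finite $l/k$, a finite generically separable (possibly ramified) $W'\to Z_l$ such that $W'\to X_l$ and $W'\to Y_l$ are Galois in the function-field sense. It then takes the unique factorization $W'\to W\to Z_l$ with $W\to Z_l$ the maximal \'etale subcover. Because $Z_l\to X_l$ is \'etale, $W$ is simultaneously the maximal \'etale subcover of $W'\to X_l$; as this construction is canonical, it is stable under $\mathrm{Gal}(k(W')/k(X_l))$, so $W\to X_l$ is Galois (and likewise over $Y_l$). This is the ``uniqueness'' step in the paper, and it is where the \'etaleness of the correspondence is actually used.
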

	
	\begin{proof}
		
		\cite[Lemma 4.2]{Kr} implies that $X\leftarrow Z\rightarrow Y$ has a core if and only if there exists a finite field extension $l/k$ and a finite, generically separable map $W'\rightarrow Z_l$ such that the induced maps to $X_l$ and to $Y_l$ are Galois. Here, ``Galois" means on the level of function fields. In particular, if such $W$ exists as in (1), then the correspondence has a core. On the other hand, if there is a core, then there is a finite extension $l/k$ and a curve $W'\rightarrow Z_l$ that is finite Galois over $X_l$ and $Y_l$ (in the sense of function fields), again by Lemma 4.2 of \emph{loc. cit.} Given any such morphism, there is a unique factorization $W'\rightarrow W\rightarrow Z_l$ where the first map is a ramified covering of curves and the second arrow a maximal finite \'etale subcover. The uniqueness implies that $W\rightarrow X_l$ and $W\rightarrow Y_l$ are Galois, as desired.
		
	\end{proof}
	
	\begin{lemma}\label{nocorespecializes}Let $S=\text{Spec}(R)$ be the spectrum of a discrete valuation ring with generic point $\eta$ and special point $s$. Let $\sX$, $\sY$, and $\sZ$ be smooth, proper curves over $S$ whose generic fibers are all geometrically connected and have genus at least 2. Let \[\xymatrix{ & \sZ\ar[dl]_f \ar[dr]^g & \\ \sX & & \sY}\]be a finite \'etale correspondence of schemes. Then the base-changed correspondence of curves over $\eta$ has a core if and only if the correspondence of curves over $s$ has a core.
		
	\end{lemma}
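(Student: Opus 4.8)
The plan is to reduce to the case where $R$ is Henselian and then transport the cover-theoretic criterion of Lemma~\ref{finitegaloiscore} between the two fibres along the specialization maps of \'etale fundamental groups. Since ``having a core'' is unchanged under field extension (\cite[Proposition~4.2]{Kr}), we may replace $R$ by its completion, and whenever convenient by a discrete valuation ring dominating it inside a finite extension of its fraction field (completing again); so we may assume $R$ is complete, hence Henselian. Because $R$ is normal and $\sX,\sY,\sZ$ are integral with geometrically connected generic fibres, Stein factorization --- using that the fibres are geometrically reduced, being smooth --- gives $f_*\mathcal{O}_{\sX}=\mathcal{O}_S$ and likewise for $\sY$ and $\sZ$, so every closed fibre is again a geometrically connected smooth proper curve of genus $\ge 2$, and $\sX_s\leftarrow\sZ_s\rightarrow\sY_s$ is a finite \'etale correspondence. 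Since $R$ is Henselian and $\sX,\sY,\sZ$ are proper over $S$, base change induces equivalences on finite \'etale covers, i.e.\ $\pi_1(\sX)\cong\pi_1(\sX_s)$, $\pi_1(\sZ)\cong\pi_1(\sZ_s)$, and similarly for $\sY$; combining this with the classical surjectivity of the specialization map on geometric fundamental groups and with the surjectivity of $\pi_1(\eta)\to\pi_1(s)$, we get surjections $\pi_1(\sX_\eta)\twoheadrightarrow\pi_1(\sX_s)$, $\pi_1(\sZ_\eta)\twoheadrightarrow\pi_1(\sZ_s)$, $\pi_1(\sY_\eta)\twoheadrightarrow\pi_1(\sY_s)$, which fit into commutative squares with the homomorphisms induced by $f$ and $g$ and with the projections to $\pi_1(\eta)\twoheadrightarrow\pi_1(s)$.

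\emph{From $\eta$ to $s$.} Suppose the correspondence over $\eta$ has a core. By Lemma~\ref{finitegaloiscore}, after a finite base extension as above, there is a geometrically connected finite \'etale cover $W_\eta\to\sZ_\eta$ with $W_\eta\to\sX_\eta$ and $W_\eta\to\sY_\eta$ Galois. The corresponding open subgroup $Q_\eta\subseteq\pi_1(\sZ_\eta)$ is then normal in $\pi_1(\sX_\eta)$ and in $\pi_1(\sY_\eta)$ (via the open embeddings coming from $f$ and $g$) and surjects onto $\pi_1(\eta)$. Let $Q_s$ be its image under $\pi_1(\sZ_\eta)\twoheadrightarrow\pi_1(\sZ_s)$; by the compatibilities above, $Q_s$ is open in $\pi_1(\sZ_s)$, normal in $\pi_1(\sX_s)$ and in $\pi_1(\sY_s)$, and surjects onto $\pi_1(s)$. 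Hence $Q_s$ corresponds to a geometrically connected finite \'etale cover $W_s\to\sZ_s$ which is Galois over $\sX_s$ and over $\sY_s$, and Lemma~\ref{finitegaloiscore} --- now over $s$, using that $\sZ_s$ is hyperbolic --- shows that the correspondence over $s$ has a core.

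\emph{From $s$ to $\eta$.} Conversely, suppose the correspondence over $s$ has a core. By Lemma~\ref{finitegaloiscore} (after a finite extension of the residue field, handled as above) there is a geometrically connected finite \'etale $W_s\to\sZ_s$ that is Galois over $\sX_s$ and over $\sY_s$. Since $\pi_1(\sZ)\cong\pi_1(\sZ_s)$, it lifts to a finite \'etale cover $\sW\to\sZ$, and since the Galois property of a finite \'etale cover is detected on the closed fibre (using $\pi_1(\sX)\cong\pi_1(\sX_s)$ and $\pi_1(\sY)\cong\pi_1(\sY_s)$), the covers $\sW\to\sX$ and $\sW\to\sY$ are Galois. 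As $\sW$ is connected and \'etale over the normal scheme $\sZ$, it is integral, so by Stein factorization its generic fibre becomes geometrically connected over a finite extension of $K$. Restricting to $\eta$ and applying Lemma~\ref{finitegaloiscore} once more produces a core over $\eta$.

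\emph{Main obstacle.} The delicate direction is ``$\eta$ has a core $\Rightarrow$ $s$ has a core''. The geometric specialization map $\pi_1(\sZ_{\overline{\eta}})\to\pi_1(\sZ_{\overline{s}})$ is in general very far from injective --- its wild part is lost in positive residue characteristic --- so one cannot simply extend a core-witnessing cover of $\sZ_\eta$ to a finite \'etale cover of $\sZ$ and restrict to $\sZ_s$; one must instead push the cover \emph{forward} along the (surjective) specialization map, which is exactly why the group-theoretic reformulation of Lemma~\ref{finitegaloiscore}, together with the compatibility of everything with $f$ and $g$, is indispensable. The rest is bookkeeping: keeping track of geometric connectedness through the various Stein factorizations and arithmetic fundamental groups.
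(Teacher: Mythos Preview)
Your argument is correct, and for the direction ``$s$ has a core $\Rightarrow$ $\eta$ has a core'' it is essentially the paper's argument rephrased: the paper lifts $W_s\to\sZ_s$ via formal deformation theory and Grothendieck's existence theorem, which is exactly what underlies the equivalence $\pi_1(\sZ)\cong\pi_1(\sZ_s)$ you invoke. Your treatment of Galois-ness via normality of the corresponding open subgroup is the group-theoretic translation of what the paper leaves implicit.

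The genuine difference is in the direction ``$\eta$ has a core $\Rightarrow$ $s$ has a core''. The paper simply cites \cite[Lemma~4.14]{Kr}, whereas you supply a self-contained argument by pushing the normal open subgroup $Q_\eta\subset\pi_1(\sZ_\eta)$ forward along the surjective specialization map and checking that normality in $\pi_1(\sX_\eta)$ and $\pi_1(\sY_\eta)$ survives (which it does, since images of normal subgroups under surjections are normal, and the specialization squares with $f_*$ and $g_*$ commute). Your remark that geometric connectedness of $W_s$ follows from surjectivity of $Q_s\to\pi_1(s)$, obtained by chasing the commutative square involving $\pi_1(\eta)\twoheadrightarrow\pi_1(s)$, is a clean way to handle what would otherwise require a separate argument. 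The upshot is that your proof is more uniform---everything is done in terms of profinite group theory and the standard specialization package---while the paper's proof is more hands-on for one direction and a citation for the other.
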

	
	\begin{proof}This is essentially contained in \cite[Lemma 4.10, 4.14]{Kr}, but we present a slightly different proof of the ``not having a core specializes" direction. First of all, having a core is invariant under field extension; therefore, we may assume that $s$ is the spectrum of an algebraically closed field. Suppose $X_{s}\leftarrow Z_{s}\rightarrow Y_{s}$ has a core. Then there exists a connected curve $W_s$ over $s$, equipped with a map $W_s\rightarrow Z_s$, such that under the induced maps $W_s$ is finite Galois over $X_s$ and $Y_s$. As finite \'etale covers canonically lift over finite-order thickenings of the base, $W_s$ canonically deforms to a formal scheme $\tilde{W}$ over $Spf(R)$. As $W_s$ is a smooth projective curve, the formal scheme $\tilde{W}$ canonically algebraizes to a smooth projective curve $\sW$ over $S$ by Grothendieck's formal existence theorem. Therefore by Lemma \ref{finitegaloiscore}, the correspondence of curves over $\eta$ has a core.
		
		For the other direction, that ``having a core specializes" see \cite[Lemma 4.14]{Kr}.
		
	\end{proof}

	We construct certain Hecke correspondences for Shimura curves. Then using Theorem \ref{oneclump} and work of Wortmann on the $\mu$-ordinary locus of Hodge-type Shimura varieties, we conclude that there are exactly two Newton strata. In fact, our techniques show there are exactly two central leaves.

	\begin{definition}\label{hecke}Let $(G,X)$ denote a Shimura curve datum, given by $F$ and $D$. Let $K\subset G(\A_f)$ be a sufficiently small open compact subgroup such that the associated Shimura curve $Sh_K(G,X)$ is a scheme. Let $l$ be primes of $\Q$ and $\lambda | l$ be a prime of $F$ such that
		
		\begin{itemize}
			
			\item $l$ is unramified in $F$
			
			\item The prime $\lambda$ has inertial degree 1 over $l$
			
			\item $D$ is split at $\lambda$
			
			\item $K$ is hyperspecial at $l$.
			
		\end{itemize}We define the correspondence of curves, $T_{\lambda}$, as the Hecke correspondence associated to $g\in G(\A_f)$ given by $(1,\dots,1,g_\lambda,1,\dots)$ where $g_\lambda$ is the matrix $\left(\begin{matrix}l & 0\\
			
			0 & 1
			
		\end{matrix}\right)$ under a choice of isomorphism $D\otimes F_{\lambda}\cong M_{2\times 2}(\Q_l)$.
		
	\end{definition}
	
	\begin{lemma}\label{tlambda_nocore}Let $(G,X)$ be a Shimura curve datum and let $K\subset \A_f$ be a sufficiently small open compact subgroup so that $Sh_K(G,X)$ is a scheme. Then
		
		\begin{enumerate}
			
			\item $T_{\lambda}$ is a finite \'etale correspondence of compact complex curves that descends to $F$, the reflex field of $(G,X)$, over which the canonical model of $M$ is defined.
			
			\item When $T_\lambda$ is restricted to geometrically connected components, it is an \'etale correspondence of curves without a core of bidegree $(l+1,l+1)$.
			
			\item If $K$ is hyperspecial at $p$, and $l\neq p$, then $T_{\lambda}$ has good reduction modulo $p$. When restricted to any geometrically connected component modulo $p$, it yields an \'etale correspondence of curves without a core.
			
		\end{enumerate}
		
	\end{lemma}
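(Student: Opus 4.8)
I would establish the three parts in turn; the only substantive point is the no-core assertion in (2). For (1), recall that $T_\lambda$ is the Hecke correspondence $Sh_K(G,X)\xleftarrow{\mathrm{pr}_1} Sh_{K'}(G,X)\xrightarrow{\mathrm{pr}_2} Sh_K(G,X)$ with $K'=K\cap g^{-1}Kg$, $g=(1,\dots,g_\lambda,\dots,1)$, and $\mathrm{pr}_2$ induced by right translation by $g$. Since $K$, hence $K'$, is taken small enough that $Sh_K$ and $Sh_{K'}$ are smooth proper curves (proper because $D$ is a division algebra, or $F\neq\Q$), the transition map $Sh_{K'}\to Sh_K$ is finite \'etale, and $\mathrm{pr}_2$ differs from $\mathrm{pr}_1$ by the algebraic automorphism ``$\cdot g$'' of the level tower and so is also finite \'etale. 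That the whole diagram is defined over the reflex field — which for the quaternionic datum attached to $(F,D)$, $D$ split at $\tau$, is $F$ embedded via $\tau$ — is the standard descent property of canonical models and their Hecke correspondences. The bidegree comes from the local index: at $\lambda$ one has $F_\lambda=\Q_l$ (inertial degree one), $K_\lambda=\GL_2(\Z_l)$ (hyperspecial) and $g_\lambda=\mathrm{diag}(l,1)$, so $K'_\lambda$ is a $\Gamma_0(l)$-type subgroup of index $l+1$, whence each of the two legs has degree $l+1$.

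For (2), I would first observe that the restriction to a geometrically connected component stays an \'etale correspondence of bidegree $(l+1,l+1)$: since $\mathrm{Nrd}(K'_\lambda)=\Z_l^\times=\mathrm{Nrd}(K_\lambda)$, the map $Sh_{K'}\to Sh_K$ is a bijection on geometric components and the degree $l+1$ of each leg is inherited componentwise. The heart of the matter is the absence of a core, and here I would pass to the complex uniformization: on a geometrically connected component $T_\lambda$ becomes
\[
\Gamma_1\backslash\HH\ \longleftarrow\ (\Gamma_1\cap\gamma^{-1}\Gamma_1\gamma)\backslash\HH\ \longrightarrow\ \Gamma_2\backslash\HH
\]
for arithmetic Fuchsian groups $\Gamma_1,\Gamma_2$ and a rational Hecke element $\gamma$ which at the place $\lambda$ equals $\mathrm{diag}(l,1)$. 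By Lemma \ref{finitegaloiscore} a core would force $\langle\Gamma_1,\gamma^{-1}\Gamma_1\gamma\rangle$ to be a discrete subgroup of $\PSL_2(\R)$ of finite covolume, hence an arithmetic lattice, hence of relatively compact image at every finite place of $\Q$; but at $l$, by strong approximation for $D^\times$, the closures of $\Gamma_1$ and of $\gamma^{-1}\Gamma_1\gamma$ are the stabilizers of two adjacent vertices of the Bruhat--Tits tree of $\mathrm{PGL}_2(\Q_l)$ (this is where the hyperspecial hypothesis at $l$ is used), and two adjacent vertex stabilizers generate an unbounded subgroup — a contradiction. Alternatively one may quote the no-core statement for Hecke correspondences of Shimura curves directly from \cite{Kr}.

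For (3), the hypotheses that $K$ is hyperspecial at $p$ and $l\neq p$ place us exactly in the situation of Theorem \ref{theorem on periodicity of the rank two Higgs bundle}: the canonical model of $M$, of abelian type (and even PEL through Deligne's mod\`ele \'etrange), has a smooth proper integral model over $\sO_{F_{\mathfrak p}}$ by \cite[Theorem 2.3.8]{Ki}, and since $T_\lambda$ changes the level only at $l$ it spreads out to a finite \'etale correspondence $\sX\leftarrow\sZ\rightarrow\sY$ of smooth proper integral models. Restricting to a geometrically connected component — which is defined after a finite extension (harmless for the presence of a core by \cite[Prop.~4.2]{Kr}) and whose reduction is again geometrically connected, the number of geometric components of a smooth proper curve over a complete DVR being constant — Lemma \ref{nocorespecializes} then transports the absence of a core from the characteristic-zero fibre, proved in (2), to the special fibre, which is the claim.

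The step I expect to be the main obstacle is the no-core assertion in (2): Margulis' commensurator criterion by itself only yields density of the full commensurator $\mathrm{Comm}(\Gamma_1)$ in $\PSL_2(\R)$, whereas what is needed is that the \emph{particular} group $\langle\Gamma_1,\gamma^{-1}\Gamma_1\gamma\rangle$ is non-discrete. The Bruhat--Tits tree argument above settles this, but it requires care in matching the $l$-adic closures with honest hyperspecial vertex stabilizers and in verifying that adjacent vertex stabilizers generate an unbounded subgroup; invoking \cite{Kr} bypasses it entirely.
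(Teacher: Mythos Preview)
Your proof is correct. Parts (1) and (3) follow the paper's argument essentially verbatim. For the no-core step in (2) you and the paper both invoke Lemma~\ref{finitegaloiscore} and the Bruhat--Tits tree at $\lambda$, but the contradiction is extracted differently. The paper reduces entirely to an $l$-adic group-theory statement---no open subgroup of $\Gamma_0(l)$ is normal under both embeddings $\Gamma_0(l)\hookrightarrow\SL_2(\Z_l)$---and proves it by observing that the subtree fixed by an open normal subgroup of a vertex stabilizer is a finite ball about that vertex, so cannot simultaneously be a ball about two distinct vertices. You instead argue globally: a core forces $H=\langle\Gamma_1,\gamma^{-1}\Gamma_1\gamma\rangle$ to lie in the normalizer of a cocompact lattice, hence to be itself an arithmetic lattice with compact closure at $\lambda$; but that closure contains two adjacent vertex stabilizers and therefore, by the amalgam $\SL_2(\Q_l)=\SL_2(\Z_l)\ast_{\Gamma_0(l)}\SL_2(\Z_l)$, all of $\SL_2(\Q_l)$. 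The paper's route is more self-contained once the local reduction is made, while yours phrases the obstruction as ``the generated group cannot be both a lattice and $l$-adically unbounded,'' which is conceptually clean but imports a bit more machinery (the normalizer-of-a-lattice fact and the identification of the $\lambda$-adic closures with vertex stabilizers via strong approximation, as you yourself flag).
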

	
	\begin{proof}
		
		By construction, $T_\lambda$ is a finite \'etale correspondence of (possibly disconnected) compact Riemann surfaces. We first prove that the correspondence, when restricted to (complex) connected components, has no core.

		As we are studying the connected components, we work with the derived group $G^{der}$. Write the correspondence of geometrically connected curves as $M\xleftarrow{f} Z\xrightarrow{g} M$. By the assumptions on $l$ and $\lambda$, $G^{der}(\Z_l)\cong \SL_2(\Z_l)\times H$, where $H$ is an $l$-adic group. Then the correspondence $T_{\lambda}$ ``comes from" the pair of inclusions of $l$-adic groups \[\xymatrix{ & \Gamma_0(l)\ar[dl] \ar[dr] & \\ \SL_2(\Z_l) & & \SL_2(\Z_l)}\]Here, $\Gamma_0(l)$ is the subgroup of $\SL_2(\Z_l)$ that reduces to the standard Borel mod $l$, the left hand inclusion is the standard inclusion, and the right hand inclusion is given by conjugation by the element $\left(\begin{matrix}l & 0\\
			
			0 & 1
			
		\end{matrix}\right)$.
		
		By Lemma \ref{finitegaloiscore}, to prove that $M\leftarrow Z\rightarrow M$ has no core, it suffices to prove that there is no finite \'etale cover $h:W\rightarrow Z$ such that $f\circ h$ and $g\circ h$ are both Galois. This is equivalent to proving that there is no finite index subgroup $\Gamma'\subset \Gamma_0(l)$ that is normal under both of the above embeddings into $\SL_2(\Z_l)$, a question in pure group theory.

		Fix $V=\Q_l^{\oplus 2}$ and consider the set $\sL$ of lattices (a.k.a. free rank 2 $\Z_l$-submodules) up to homothety in $V$. This discrete set can be enchanced into a tree, the \emph{Bruhat-Tits tree} $\sT$, as follows. Given two lattices $L_1$ and $L_2$, we join their representatives in $\sL$ by an edge if $L_1$ is a sublattice of index $l$ in $L_2$. (This is symmetric because the elements of $\sL$ are lattices up to homothety.) It is well-known that $\sT$ is an $l+1$-regular tree \cite[Theorem 1, p. 98-101]{Serre}. The group $\SL_2(\Q_l)$ acts on $\sT$ and the stabilizer of a vertex is isomorphic to $\SL_2(\Z_l)$. Moreover, the stabilizer of an edge $e$ between $v$ and $w$ is $\Gamma_0(l)$ \cite[p. 106-107]{Serre}. and in fact $Stab(v)\leftarrow Stab(e)\rightarrow Stab(w)$ is isomorphic to $\SL_2(\Z_l)\leftarrow \Gamma_0(l)\rightarrow \SL_2(\Z_l)$ (see \cite[Corollaire 1, p. 110]{Serre} and its following comments).

		Now we claim that there is no finite index subgroup $\Gamma'$ of $\Gamma_0(l)$ that is normal in both copies of $\SL_2(\Z_l)$. Suppose that $\Gamma'$ is a normal subgroup of $Stab(v)\cong \SL_2(\Z_l)$. Let $\sT_{\Gamma'}$ be the subgraph of $\sT$ fixed by $\Gamma'$. The ends emanating from $v$ are in bijective correspondence with $\P V$ in an $\SL_2(\Z_l)$-equivariant way \cite[P. 101]{Serre}. Hence $\SL_2(\Z_l)$ acts transitively on the ends emanating from $v$. As $\Gamma'\subset \SL_2(\Z_l)$ is normal and of finite index, the subgraph $\sT_{\Gamma'}$ is a ball around $v$ of some \emph{finite} radius. On the other hand, there is no \emph{finite} subgraph of $\sT$ that is a ball around both $v$ and $w$; therefore there is no finite index subgroup $\Gamma'\subset \Gamma_0(l)$ that is normal under both of the embeddings into $\SL_2(\Z_l)$. Translating back to algebraic geometry, there is no finite \'etale cover of $Z$ that is Galois over both copies of $M$. By Lemma \ref{finitegaloiscore}, this finite \'etale correspondence of curves over $\C$ has no core. The (bi)degree is just the index of a Borel in $\SL_2(\F_l)$; by considering the action on $\P^1_{\F_l}$, this is $l+1$.

		By the discussion on canonical models in Appendix \ref{shimura_varieties}, $T_\lambda$ descends to the reflex field of $(G,X)$ because the construction of canonical models is Hecke-equivariant. When $K$ is hyperspecial at $p$ and $l\neq p$, then by the discussion on integral canonical models, for each prime $\pP|p$, the Hecke correspondence has a model over $\sO_{F,\pP}$. We may take an unramified extension to pick out the geometrically connected components by \cite[2.2.4]{Ki}; then the special fibers are also geometrically connected by Zariski's principle of connectedness. By applying Lemma \ref{nocorespecializes}, we see that the reduction modulo $p$ of the Hecke correspondence of geometrically connected components has no core.

	\end{proof}
	
	
	%

	For each geometric point $x\in \sM(\overline{\F}_p)$, we may consider the associated abelian variety $\sU_x$. It follows from a theorem of Grothendieck-Katz that the Newton polygon jumps on a Zariski closed subset of $\sM(\overline{\F}_p)$. For each geometrically connected component $\sM_i$ of $\sM$, we have an abelian scheme $\sU_i\rightarrow \sM_i$. Over $\bar \F_p$, these connected components are all isomorphic. Therefore there are only finitely many points of $\sM(\bar \F_p)$ where the Newton polygon jumps.
	
	\begin{definition}Notation as in \ref{shimura_curve}. Then the \emph{Newton jumping locus} $\sN_p$ is the subset of points $x\in \sM(\overline{\F}_p)$ such that $\sU_x$ has a Newton polygon which is higher than the generic Newton polygon. 
	\end{definition}
	
	\begin{corollary}\label{independence}
		
		Notations as in \ref{shimura_curve}. Then the Newton jumping locus $\sN_p$ consists of only one Newton polygon. The set $\sN_p\subset \sM(\overline{\F}_p)$ is independent of the rational symplectic representation realizing $(G,X)$ as a Hodge-type Shimura curve.
		
	\end{corollary}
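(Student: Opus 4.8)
\emph{Strategy.} The plan is to exhibit each non-generic Newton stratum of the abelian scheme $\sU\to\sM$ as an \'etale clump for a suitable Hecke correspondence, and then to quote the one-clump theorem (Theorem \ref{oneclump}). Since the geometrically connected components of $\sM_{\overline{\F}_p}$ are mutually isomorphic compatibly with the Newton stratification, I would work throughout on a single component $\sM_0$. Fix a rational prime $l\neq p$ and a prime $\lambda\mid l$ of $F$ satisfying the conditions of Definition \ref{hecke} (such $\lambda$ exists by Chebotarev), and let $\sM_0\xleftarrow{\,f\,}Z\xrightarrow{\,g\,}\sM_0$ be the good reduction modulo $p$ of $T_\lambda$ restricted to $\sM_0$; by Lemma \ref{tlambda_nocore}(3) it is an \'etale correspondence of geometrically connected hyperbolic curves over $\overline{\F}_p$ without a core. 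The first input is Wortmann's theorem on the $\mu$-ordinary locus of Hodge-type Shimura varieties: the $\mu$-ordinary locus of $\sU_0\to\sM_0$ is open and dense, so its complement $\sN_p\cap\sM_0(\overline{\F}_p)$ is a finite set of closed points, a finite disjoint union of Newton strata.

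\emph{The clump argument.} The correspondence $T_\lambda$ carries an isogeny between the two pullbacks $f^{*}\sU_0$ and $g^{*}\sU_0$ of the universal abelian scheme, of degree a power of $l$; as $l\neq p$ this isogeny restricts to an \emph{isomorphism} of $p$-divisible groups $f^{*}\sU_0[p^{\infty}]\xrightarrow{\ \sim\ }g^{*}\sU_0[p^{\infty}]$. Hence for every $z\in Z(\overline{\F}_p)$ the fibres $\sU_{0,f(z)}$ and $\sU_{0,g(z)}$ have the same Newton polygon, so for each Newton polygon $\nu$ one has $f^{-1}(\sM_0^{\nu})=g^{-1}(\sM_0^{\nu})$ as subsets of $Z(\overline{\F}_p)$. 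If $\nu$ is non-generic this common set $S_\nu$ is finite and satisfies $f^{-1}(f(S_\nu))=g^{-1}(g(S_\nu))=S_\nu$, i.e.\ it is a clump, and it is \'etale because the correspondence is everywhere \'etale. By Theorem \ref{oneclump} at most one $S_\nu$ can be nonempty, so at most one non-generic Newton polygon occurs. To see that $\sN_p$ is in fact nonempty I would invoke either the nonemptiness of the basic locus of the proper Shimura curve $\sM_0$, or, more self-containedly, the Hasse-Witt invariant $HW(\sM_0)$: it is a section of $K_{\sM_0}^{(p^{f}-1)/2}$, a line bundle of positive degree on the proper curve $\sM_0$, hence it vanishes somewhere, and its zero locus is non-$\mu$-ordinary by the analysis of \cite{SZZ}. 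This proves (i). The same argument, replacing everywhere ``Newton polygon'' by ``$p$-divisible group up to isomorphism'', shows that there are exactly two central leaves.

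\emph{Independence of the symplectic representation.} Let $\rho_1,\rho_2$ be two rational symplectic representations realizing $(G,X)$ as a Shimura datum of Hodge type, with associated abelian schemes $\sU_0^{(1)},\sU_0^{(2)}\to\sM_0$ and Newton jumping loci $\sN_p^{(1)},\sN_p^{(2)}$. The Hecke correspondence $T_\lambda$ depends only on the group datum $(G,X)$ and not on the chosen representation; thus, applying the previous paragraph to each $\rho_i$, both $f^{-1}(\sN_p^{(1)})$ and $f^{-1}(\sN_p^{(2)})$ are nonempty \'etale clumps of the \emph{same} correspondence $\sM_0\xleftarrow{f}Z\xrightarrow{g}\sM_0$. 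By the uniqueness assertion in Theorem \ref{oneclump} they coincide, and applying the surjection $f$ gives $\sN_p^{(1)}=\sN_p^{(2)}$, as desired.

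\emph{Main obstacle.} The one genuinely delicate point is verifying the clump conditions $f^{-1}(f(S))=g^{-1}(g(S))=S$ for a Newton stratum's preimage; this is precisely where the hypothesis $l\neq p$ enters, upgrading the Hecke isogeny to an \emph{isomorphism} of $p$-divisible groups so that Newton polygons (and $p$-divisible-group types) are preserved on the nose under both legs of the correspondence. The remaining care is in securing nonemptiness of $\sN_p$, which turns ``at most two strata'' into ``exactly two''; everything else is a formal consequence of Theorem \ref{oneclump}, Wortmann's density theorem, and Lemma \ref{tlambda_nocore}.
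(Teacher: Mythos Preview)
Your approach is essentially the paper's: reduce to a single geometrically connected component, produce the \'etale correspondence without a core via $T_\lambda$ at a prime $\lambda\nmid p$ (Lemma~\ref{tlambda_nocore}), observe that the Hecke isogeny preserves Newton polygons so each non-generic stratum is an \'etale clump, and apply Theorem~\ref{oneclump}; the independence argument is identical to the paper's, which also cites Kisin for the Hecke correspondence being independent of the symplectic embedding.

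The only substantive difference is the argument for $\sN_p\neq\emptyset$. The paper does \emph{not} use the Hasse--Witt invariant here; instead it combines Wortmann's second theorem (all $\mu$-ordinary points have isomorphic $p$-divisible group) with Oort's result that Ekedahl--Oort strata of $\scrA_{n,1,K'}$ are quasi-affine: if $\sN_p$ were empty, the proper connected curve $\sM_0$ would map to a single quasi-affine EO stratum and hence be constant. Your basic-locus alternative is fine. Be cautious, however, with your Hasse--Witt alternative: in the paper's logical order the containment $\mathrm{div}(HW(\sM_0))\subset\sN_p$ is established only in Proposition~\ref{HW-map}(iii), whose proof \emph{quotes} the nonemptiness of $\sN_p$ from Corollary~\ref{independence}; so invoking it here is circular unless you bypass the present paper and pull the identification directly from the Dieudonn\'e-theoretic analysis in \cite{SZZ} for the mod\`ele \'etrange.
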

	
	\begin{proof}
		
		We first argue that $\sN_p$ intersects every geometrically connected component non-trivially. By \cite[Theorem 1.1]{Wort}, the $\mu$-ordinary locus of $\sM$ is open and dense. Any pair of points $x\in\sM(\overline{\F}_p)\backslash \sN_p$ are Newton generic and hence $\mu$-ordinary. Then \cite[Theorems 1.2]{Wort} implies thats $\sU_{x}[p^\infty]$ and $\sU_{y}[p^\infty]$ are isomorphic as $p$-divisible groups over $\overline{\F}_p$. If $\sN_p$ were empty, then in particular $\sM$ would live entirely in a fixed Ekedahl-Oort strata of $\scrA_{n,1,K'}$. On the other hand, such EO strata are quasi-affine by \cite[Theorem 1.2]{Oort}. As each $\sM_i$ is proper and connected, this proves that the map is constant, which is a contradiction. In particular, the Newton polygon jumps on every (geometrically) connected component. Now, as the $\sU_i\rightarrow \sM_i$ are all isomorphic over $\bar{\F}_p$, the special Newton polygons are the same on the different geometrically connected components. Note that it still, \`a priori, depends on the choice of rational symplectic embedding. We now focus on a fixed geometrically-connected component, $\sM_0$.

		Pick a $\lambda$ in $F$, prime to $p$, such that the conditions in Definition \ref{hecke} are met. Then $T_{\lambda}$ restricts to an \'etale correspondence without a core \[\xymatrix{ & \sZ\ar[dl]_s \ar[dr]^t & \\ \sM_0 & & \sM_0}\] over $\overline{\F}_p$ by Lemma \ref{tlambda_nocore}.

		Recall that $s^*\sU_0$ is isogenous to $t^*\sU_0$ as abelian schemes over $\sX$ by Lemma \ref{heckepullbackisogenous}. Therefore if $\sN_p$ consisted of more than one type of Newton polygon, then there would be at least two clumps. On the other hand, Theorem \ref{oneclump} implies that there is at most one clump.

		Now, the set $\sN_p\subset\sM(\overline{\F}_p)$ does not depend on the choice of symplectic representation: indeed, the Hecke correspondence $\sM_0\leftarrow\sX\rightarrow\sM_0$ is \emph{independent} of the choice of symplectic representation by \cite[2.3.8 (ii)]{Ki}, $\sN_p$ is the unique clump of this Hecke correspondence, and the abelian schemes $\sU_i\rightarrow \sM_i$ over each geometrically connected components are all isomorphic.
		
	\end{proof}
	
	\begin{remark} In fact, $s^*\sU$ is separably isogenous to $t^*\sU$ by the construction of the Hecke correspondence $T_\lambda$; therefore, for $x,y\in\sN_p$, the $p$-divisible groups $\sU_{x}[p^\infty]$ and $\sU_{y}[p^\infty]$ are isomorphic as $p$-divisible groups over $\overline{\F}_p$ by Theorem \ref{oneclump}.
		
	\end{remark}

	\begin{proposition}\label{HW-map}
		
		Notation as above. Then the Hasse-Witt map $HW(\sM_0)$ has the following properties:
		
		\begin{itemize}
			
			\item [(i)] $HW(\sM_0)\neq 0$;
			
			\item [(ii)] $\mathrm{div}(HW(\sM_0))$ is multiplicity free;
			
			\item [(iii)] The Newton jumping locus of $\sU_0\rightarrow \sM_0$ is $\mathrm{div}(HW(\sM_0))$
			
			
			
		\end{itemize}
		
	\end{proposition}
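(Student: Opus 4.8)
The plan is to establish (i), deduce (iii) formally from the one clump theorem, and prove (ii) by a local multiplicity-one computation — the last being the only substantive point. For (i): recall from Theorem~\ref{theorem on periodicity of the rank two Higgs bundle} and its proof that $F_{\sM_0}^{*f-1}H$ is unstable with Harder--Narasimhan filtration $0\to\sL\to F_{\sM_0}^{*f-1}H\to\sL^{-1}\to 0$ (here $\sL=K_{\sM_0}^{1/2}$, $\deg\sL=g-1>0$), that $Gr_{Fil_{HN}}(F_{\sM_0}^{*f-1}H,\nabla_{can})\cong(E_{unif},\theta_{unif})$ via the isomorphism $\psi$, and that the intermediate Higgs fields along the flow vanish. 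Applying $C^{-1}$ stage by stage to the Higgs sub-line bundle $(\sL^{-1},0)\subset(E_{unif},\theta_{unif})$ therefore produces a flat sub-line bundle $(F_{\sM_0}^{*f}\sL^{-1},\nabla_{can})\subset(F_{\sM_0}^{*f-1}H,\nabla_{can})$, hence, after $Gr_{Fil_{HN}}$ and $\psi$, a nonzero $\theta_{unif}$-invariant subsheaf of $(E_{unif},\theta_{unif})$. Now suppose $HW(\sM_0)=0$; by definition this means $F_{\sM_0}^{*f}\sL^{-1}$ lies in the Harder--Narasimhan sub $\sL$, so the flowed subsheaf lies inside $\sL\subset E_{unif}$. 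But $\theta_{unif}$ maps $\sL$ isomorphically onto the summand $\sL^{-1}\otimes K_{\sM_0}$ of $E_{unif}\otimes K_{\sM_0}$, which is transverse to $\sL\otimes K_{\sM_0}$, so $\sL$ contains no nonzero $\theta_{unif}$-invariant subsheaf — a contradiction. (Equivalently and without the flow: $HW(\sM_0)=0$ would force the torsion sheaf $\sL/F_{\sM_0}^{*f}\sL^{-1}$, of positive length $(g-1)(1+p^f)$, to inject into the invertible sheaf $F_{\sM_0}^{*f-1}H/F_{\sM_0}^{*f}\sL^{-1}\cong F_{\sM_0}^{*f}\sL$, which is impossible.) Hence $HW(\sM_0)\neq 0$, and since $\deg\sL^{p^f-1}=(p^f-1)(g-1)>0$ its zero locus $\mathrm{div}(HW(\sM_0))$ is a nonempty proper closed subscheme of $\sM_0$.

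For (iii): choose a prime $\lambda$ of $F$, prime to $p$, satisfying the hypotheses of Definition~\ref{hecke}, so that $T_\lambda$ restricts to an \'etale correspondence of geometrically connected (hyperbolic) curves $\sM_0\xleftarrow{\,s\,}\sZ\xrightarrow{\,t\,}\sM_0$ without a core (Lemma~\ref{tlambda_nocore}). I claim $\sN_p$ and $\mathrm{supp}(\mathrm{div}(HW(\sM_0)))$ pull back to the same \'etale clump on $\sZ$. On one hand, $\sN_p\subset\sM_0(\overline{\F}_p)$ is finite (Grothendieck--Katz) and nonempty (proof of Corollary~\ref{independence}); since $s^*\sU_0$ and $t^*\sU_0$ are isogenous over $\sZ$ (Lemma~\ref{heckepullbackisogenous}), one has $s^{-1}(\sN_p)=t^{-1}(\sN_p)$, which, $s$ and $t$ being finite \'etale, is an \'etale clump. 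On the other hand, the uniformizing Higgs bundle, the periodic Higgs--de Rham flow, and the $W_2$-lifting all pull back compatibly along finite \'etale maps, so $s^*HW(\sM_0)=HW(\sZ)=t^*HW(\sM_0)$; consequently $s^*\mathrm{div}(HW(\sM_0))=t^*\mathrm{div}(HW(\sM_0))$, and $\mathrm{supp}(s^*\mathrm{div}(HW(\sM_0)))=s^{-1}(\mathrm{supp}(\mathrm{div}(HW(\sM_0))))$ is another \'etale clump, nonempty by (i). By the one clump theorem (Theorem~\ref{oneclump}) the two \'etale clumps coincide; applying the surjection $s$ gives $\sN_p=\mathrm{supp}(\mathrm{div}(HW(\sM_0)))$, the set-theoretic content of (iii).

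It remains to prove (ii) — equivalently, to upgrade this to a scheme-theoretic equality by showing $HW(\sM_0)$ vanishes to order exactly one at every $x\in\sN_p$ — and this is the main obstacle. Here I would argue locally at $x$. Using the exponential-twisting description of $C^{-1}$ from \cite{LSZ0}, one writes $HW(\sM_0)$ near $x$ as an explicit composite built from a \v{C}ech representative of the obstruction class to lifting the relative Frobenius of $\sM_0$ over $\tilde{\sM_0}$ together with the iterated Frobenius pullbacks occurring along the flow; its first-order behaviour at $x$ is then governed by the Kodaira--Spencer map of the Shimura curve, which is an isomorphism $T_{\sM_0}\cong\sL^{-2}$. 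Non-degeneracy of Kodaira--Spencer should force $d(HW(\sM_0))(x)\in\sL^{p^f+1}|_x$ to be nonzero, so that $\mathrm{ord}_x(HW(\sM_0))=1$. This is precisely the analogue, for Shimura curves, of Igusa's theorem that the Hasse invariant of a modular curve has simple zeros at the supersingular points; it may also be phrased as the transversality at $x$ of the affine subspace $A$ and the cone $K$ inside $H^1(\sM_0,K_{\sM_0}^{-p})$ of \cite{LSYZ}. I expect this transversality computation to be the crux; parts (i) and (iii), by contrast, are essentially formal given the results quoted above.
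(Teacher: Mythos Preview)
Your argument for (iii) is essentially the paper's: both pull back $HW$ and the Newton jumping locus along $T_\lambda$, observe that each yields an \'etale clump, and invoke Theorem~\ref{oneclump}. One small imprecision: you write $s^*HW(\sM_0)=HW(\sZ)=t^*HW(\sM_0)$, but the isomorphism $\psi$ in the flow is only canonical up to scalar, so the correct statement (as in the paper) is that $s^*HW(\sM_0)$ and $t^*HW(\sM_0)$ are each \emph{proportional} to $HW(\sZ_0)$. This does not affect the divisors, so your conclusion stands.

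Your argument for (i) is genuinely different from the paper's and in fact more self-contained. The paper simply cites \cite[Remark~3.4]{SZZ}, which uses Honda--Tate theory on the auxiliary PEL family to produce a $\mu$-ordinary point where $HW$ does not vanish. Your intrinsic argument---that $HW\equiv 0$ would force the flat sub-line bundle $F^{*f}\sL^{-1}$ into $Fil^1=\sL$, yielding after $Gr_{Fil_{HN}}$ a nonzero Higgs subsheaf of $(\sL,0)\subset(E_{unif},\theta_{unif})$ on which $\theta_{unif}$ vanishes, contradicting that $\theta_{unif}|_\sL$ is an isomorphism---is correct and avoids any appeal to the mod\`ele \'etrange. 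Your alternative degree count is also valid. This is a nice improvement.

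For (ii) you are right that this is the substantive point, and your sketch is reasonable in spirit but not what the paper does. The paper again defers to \cite[Proposition~5.3]{SZZ}, where the simplicity of the zeros is proved by an explicit computation with Zink's \emph{displays} attached to the $p$-divisible group of the auxiliary abelian scheme at a Newton-jumping point. Your proposed route via exponential twisting and the Kodaira--Spencer isomorphism is plausible---it is indeed the Shimura-curve analogue of Igusa's theorem---but carrying it out would require tracking the obstruction class and the iterated Frobenius pullbacks through the $f$ steps of the flow, and you have not done this. So (ii) remains a sketch in your proposal, whereas the paper settles it by citation to a completed display computation.
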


	\begin{proof}
		
		Part (i) follows from \cite[Remark 3.4]{SZZ} by Honda-Tate theory. Part (ii) is given by \cite[Proposition 5.3]{SZZ} with an explicit calculation using displays. (Both computations rely on a fixed mod\`ele \'etrange.) Finally, consider the Hecke correspondence $T_{\lambda}$: \[\xymatrix{ & \sZ_0\ar[dl]_s \ar[dr]^t & \\ \sM_0 & & \sM_0}\]
		By Lemma \ref{tlambda_nocore}, $T_\lambda$ is an \'etale correspondence without a core. Note that $$
		s^*(E_{unif},\theta_{unif})\cong t^*(E_{unif},\theta_{unif})
		$$
		because both $s$ and $t$ are finite \'etale; moreover, this pulled-back Higgs bundle is uniformizing on $Z$. As $(E_{unif},\theta_{unif})$ is $f$-periodic, so is the pulled-back Higgs bundle.\footnote{This follows from the fact that the inverse Cartier commutes with finite \'etale pullback, see e.g. \cite[Theorem 5.3]{Lan19}.} From the construction of the Hasse-Witt invariant, it follows that $s^*(HW(\sM_0))$ and $t^*(HW(\sM_0))$ are both proportional to $HW(\sZ_0)$.
		
		Therefore the Hasse-Witt invariant yields a nonzero invariant pluricanonical differential form on $T_{\lambda}$. By taking the zero locus, we obtain that $\mathrm{div}(HW)$, considered as a finite set of points, is a clump. As the Newton jumping locus is non-empty (see the proof of Corollary \ref{independence}), it also yields a clump on $T_\lambda$. By Theorem \ref{oneclump}, these two clumps must coincide; hence $\sN_p=\mathrm{div}(HW(\sM_0))$ as desired.
		
		
	\end{proof}
	The mass formula immediately follows.
	
	\begin{corollary}\label{massformula}
		
		Notation as in Theorem \ref{main_result}. Then the following mass formula holds:
		
		$$|\sN_p|=(p^f-1)(g-1),$$
		
		where $f=[F_{\mathfrak p}: \Q_p]$ and $g$ is the genus of the curve $X$.
		
	\end{corollary}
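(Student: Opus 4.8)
The plan is to read off the mass formula directly from Proposition \ref{HW-map} together with the explicit identification, made in the construction of the Hasse-Witt map above, of the line bundle in which the Hasse-Witt invariant lives. Throughout I work on a fixed geometrically connected component $\sM_0$ of the good reduction $\sM$ over $\bar\F_p$, and write $g=g(\sM_0)$ for its genus. This restriction is harmless: as in the proof of Corollary \ref{independence}, all geometrically connected components of $\sM$ are isomorphic over $\bar\F_p$ and carry isomorphic abelian schemes, so $\sN_p$ meets each of them in the ``same'' locus.

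First I would note that, by Proposition \ref{HW-map}(i), the Hasse-Witt invariant $HW(\sM_0)$ is a nonzero section; hence its vanishing locus $\mathrm{div}(HW(\sM_0))$ is a genuine effective divisor on the smooth projective curve $\sM_0$. By the construction of the Hasse-Witt map, $HW(\sM_0)$ is a global section of $F_{\sM_0}^{*f}\sL\otimes\sL^{-1}\cong\sL^{p^f-1}$, where $\sL=K_{\sM_0}^{1/2}$ and $f=[F_{\mathfrak p}:\Q_p]$; since $p$ is odd this line bundle is independent of the chosen theta characteristic. Therefore
\[
\deg\mathrm{div}\bigl(HW(\sM_0)\bigr)=\deg\sL^{p^f-1}=(p^f-1)\deg\sL=(p^f-1)\cdot\tfrac{1}{2}\deg K_{\sM_0}=(p^f-1)(g-1),
\]
using $\deg K_{\sM_0}=2g-2$.

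Next I would invoke Proposition \ref{HW-map}(ii): the divisor $\mathrm{div}(HW(\sM_0))$ is multiplicity free, hence reduced, so the number of points in its support equals its degree, namely $(p^f-1)(g-1)$. Finally, Proposition \ref{HW-map}(iii) identifies the Newton jumping locus $\sN_p$ with the support of $\mathrm{div}(HW(\sM_0))$. Combining the three yields $|\sN_p|=(p^f-1)(g-1)$; and since $q=p^f$ and a genus-$g$ curve has topological Euler characteristic $2-2g$, this matches the reformulation $|\sN_p|=(1-q)\chi_{top}(M(\C))/2$ of Theorem \ref{main_result}(iii).

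I do not expect a genuine obstacle at this stage. The three substantive inputs --- nontriviality of $HW(\sM_0)$ (Honda--Tate theory), reducedness of $\mathrm{div}(HW(\sM_0))$ (an explicit display computation), and the coincidence of $\mathrm{supp}\,\mathrm{div}(HW(\sM_0))$ with the Newton jumping locus (the one-clump theorem, Theorem \ref{oneclump}, applied to the Hecke correspondence $T_\lambda$, which by Lemma \ref{tlambda_nocore} is a finite \'etale correspondence of geometrically connected curves without a core) --- are exactly the content of Proposition \ref{HW-map} and Corollary \ref{independence}. What is left is only the elementary degree computation $\deg\sL^{p^f-1}=(p^f-1)(g-1)$ on a curve, the tautology that the support of a reduced effective divisor on a curve consists of exactly $\deg$ points, and the bookkeeping over the mutually isomorphic geometrically connected components of $\sM$ --- none of which presents any difficulty.
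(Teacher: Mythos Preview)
Your proposal is correct and follows essentially the same approach as the paper: reduce to a single geometrically connected component, invoke Proposition \ref{HW-map} to identify $\sN_p$ (on that component) with the reduced divisor of the nonzero section $HW(\sM_0)\in H^0(\sM_0,K_{\sM_0}^{(p^f-1)/2})$, and compute the degree of that line bundle as $(p^f-1)(g-1)$. The paper's proof compresses your steps into two sentences, but the logic is identical.
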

	
	\begin{proof}
		
		The Newton jumping locus in each geometrically connected component has the same cardinality. Hence, we reduce to a geometrically connected component. By Proposition \ref{HW-map}, this is the degree of $$HW(\sM_0)\in H^0(\sM_0,K ^{\otimes\frac{p^f-1}{2}}_{\sM_0}).$$
		
		This degree is $(p^f-1)(g-1)$, as desired.
		
	\end{proof}
	
	\begin{proof}[Proof of Theorem \ref{main_result}]
		
		Combine Corollary \ref{independence} and Corollary \ref{massformula}.
		
	\end{proof}
	
	\section{Periodic Higgs conjecture}
	Let $C$ be a smooth projective curve over $\C$. Recall from \cite[\S4, \S6]{KS} that we introduced the category $\PDR(C)$ (resp. $\PHG(C)$) of periodic de Rham (resp. Higgs) bundles over $C$, as well as the full subcategory $\MDR(C)$ (resp. $\MHG(C)$) of motivic de Rham (resp. Higgs) bundles. They sit in the four corners of the following commutative diagram:
	\begin{equation*}\label{eq1}
		\begin{CD}
			\MDR(C)@>\iota>>\PDR(C)\\
			@V\Gr VV@  VV\Gr V\\
			\MHG(C)@>\iota>>\PHG(C).
		\end{CD}
	\end{equation*}
	The left vertical arrow is an equivalence of semisimple categories over $\C$ (\cite[Lemmata 5.2 5.4]{KS}). Recall that the periodic de Rham conjecture \cite[Conjecture 1.4]{KS} predicts that the top horizontal arrow is an equivalence (of semisimple categories over $\C$). As a Higgs companion, we propose the following
	\begin{conjecture}[Periodic Higgs conjecture]\label{motivicity conj}
		Notations as above. Then the inclusion functor
		$$
		\iota: \MHG(C)\to \PHG(C)
		$$
		is essentially surjective, viz. an equivalence of categories.
	\end{conjecture}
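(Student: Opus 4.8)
The plan is to deduce the Periodic Higgs conjecture from the Periodic de Rham conjecture (\cite[Conjecture 1.4]{KS}, the top horizontal arrow in the diagram above) together with a comparison of the two notions of periodicity under Simpson's correspondence over $\C$.

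First I would reduce to the case of a Higgs-\emph{stable} object of degree zero. Periodic Higgs bundles form a rigid tensor category (\cite[Proposition 6.2]{KS}), and the Polystability conjecture \ref{polystability} predicts that every periodic Higgs bundle is a direct sum of stable degree-zero pieces; granting this, a dévissage in the spirit of the proof of Theorem \ref{classification} reduces essential surjectivity of $\iota$ to showing that each stable degree-zero periodic $(E,\theta)$ is motivic. Proving enough of Conjecture \ref{polystability} to run this dévissage is part of the task — but since $\MHG(C)$ is already semisimple, one really only needs that a periodic sub- or quotient-object splits off as a direct summand.

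Second, given such a stable $(E,\theta)$: since a periodic Higgs bundle is graded, $E=\bigoplus_i E^i$ with $\theta(E^i)\subset E^{i-1}\otimes K_C$, it is a system of Hodge bundles, so the irreducible flat bundle $(V,\nabla)$ attached to it by Simpson underlies a polarizable $\C$-VHS with a Hodge filtration $Fil$ satisfying $\Gr_{Fil}(V,\nabla)\cong (E,\theta)$. \textbf{The crux}, and the step I expect to be the main obstacle, is to show that $(V,\nabla,Fil)$ lies in $\PDR(C)$: that after a suitable spreading-out the iterated inverse Cartier transforms of $(E,\theta)$ modulo $p$, for $p\gg 0$, glue to exactly the reductions of $(V,\nabla)$. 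This is the Compatibility conjecture in its appropriate multi-step form; it asks for an arithmetic refinement of the complex Simpson correspondence compatible with the Ogus--Vologodsky correspondence in characteristic $p$. It decomposes into (i) showing $(V,\nabla)$ is defined over $\overline{\Q}$ and integral, which should follow from boundedness of the periodic Higgs--de Rham flow and its integral realisation over $W(k)$ in the theory developed in \cite{LS,KS}; and (ii) showing every Galois conjugate of the associated local system again underlies a $\C$-VHS, which one would extract by running the flow at the various $p$-adic places of the field of definition.

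Third, granting $(V,\nabla,Fil)\in\PDR(C)$, the Periodic de Rham conjecture gives that it is motivic, a direct factor of some Gauss--Manin system $R^i g_*$; applying $\Gr$ and using that $\Gr\colon \MDR(C)\to \MHG(C)$ is an equivalence of semisimple categories (\cite[Lemmata 5.2, 5.4]{KS}) then shows $(E,\theta)$ is motivic, closing the induction. As internal consistency checks the program recovers the rank-one case (via cyclic covers), the elliptic-curve case, where Theorem \ref{classification} already identifies the simple periodic objects as torsion line bundles — which are motivic, being summands of $R^0$ of a connected étale cover — and the uniformizing Higgs bundles of curves admitting modular embeddings via Theorem \ref{intro_geo_modular}. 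A more purely arithmetic route — turning periodicity into a compatible system of Frobenius-periodic flat bundles mod $p$, upgrading it via $p$-adic Riemann--Hilbert and companions to an $\ell$-adic and then a complex local system, and then proving that local system of geometric origin — runs into an essentially equivalent non-abelian Fontaine--Mazur type obstacle; so I expect the de Rham-conjecture route above, with the Compatibility conjecture as its hard core, to be the more promising one.
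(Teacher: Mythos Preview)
The statement you are attempting to prove is labeled a \emph{Conjecture} in the paper and is treated as open throughout: the paper offers no proof, only consequences and partial evidence (the rank-one case via Proposition~\ref{periodic_line_bundles}, the elliptic-curve case via Theorem~\ref{classification}, and the uniformizing case via Theorem~\ref{intro_geo_modular}). So there is no ``paper's own proof'' to compare against, and your proposal cannot be graded as a correct proof of a theorem.

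What you have written is not a proof but a conditional reduction: you assume the Polystability conjecture (Conjecture~\ref{polystability}), a multi-step Compatibility conjecture, and the Periodic de Rham conjecture \cite[Conjecture~1.4]{KS}, all of which are themselves open. Moreover, the paper explicitly records (in the paragraph immediately following the conjecture) that the Periodic Higgs conjecture \emph{implies} the Periodic de Rham conjecture; so your Step~3, which invokes the de Rham conjecture to conclude motivicity, is assuming something at least as strong as what you set out to prove. Your ``crux'' step~(ii), showing that periodicity of $(E,\theta)$ forces the Simpson-corresponding $(V,\nabla,Fil)$ to lie in $\PDR(C)$, is precisely the content of the missing arithmetic Simpson correspondence the paper isolates as a subsidiary conjecture; no mechanism in \cite{LS,KS} currently produces the integrality or the rigidity of Galois conjugates you need. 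In short, your outline is a reasonable roadmap of how the circle of conjectures in \S4 fit together, but it does not constitute a proof, and the paper does not claim one.
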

	Since every object in $\MHG(C)$ is semisimple, the polystability conjecture follows from the above conjecture. By \cite[Lemma 5.5 (iii), Lemma 6.1]{KS}, the polystability conjecture alone implies that the right vertical arrow is fully faithful.

	We note that the periodic Higgs conjecture is indeed a strong conjecture: First, it implies the periodic de Rham conjecture (for $C$). Second, it implies the following subsidiary conjecture:
	\begin{conjecture}[Arithmetic Simpson correspondence]
		Let $C$ be a smooth projective curve over $\C$. The grading functor
		$$
		\Gr: \PDR(C)\to \PHG(C)
		$$
		is an equivalence of semsimple categories.
	\end{conjecture}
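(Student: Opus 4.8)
The plan is to deduce the statement from the \emph{periodic Higgs conjecture} (Conjecture~\ref{motivicity conj}) together with the known structure of the commutative square displayed above, and, failing an unconditional argument, to isolate exactly what remains. For the conditional route: the left vertical arrow $\Gr\colon \MDR(C)\to\MHG(C)$ is already an equivalence of semisimple categories by \cite[Lemmata 5.2, 5.4]{KS}; the periodic Higgs conjecture makes the bottom arrow $\iota\colon\MHG(C)\to\PHG(C)$ an equivalence and, as noted in the text, also forces the periodic de Rham conjecture, hence makes the top arrow $\iota\colon\MDR(C)\to\PDR(C)$ an equivalence. Then $\Gr\colon\PDR(C)\to\PHG(C)$ factors as $\iota_{\MHG\to\PHG}\circ\Gr_{\MDR\to\MHG}\circ\bigl(\iota_{\MDR\to\PDR}\bigr)^{-1}$, a composite of equivalences of semisimple categories. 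I would record this first, since it is unconditional modulo conjectures the paper already isolates.

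For an unconditional attack the square is no longer available, and I would split the claim into \emph{fully faithful} and \emph{essentially surjective}. The first reduces, by \cite[Lemma 5.5 (iii), Lemma 6.1]{KS}, to the polystability conjecture (Conjecture~\ref{polystability}): once every periodic Higgs bundle is polystable, both $\PDR(C)$ and $\PHG(C)$ are semisimple and $\Gr$ between them is automatically fully faithful. This step is ``purely algebraic'' but still open; I would attack it through the rigid tensor / Tannakian structure on $\PHG(C)$ (\cite[Proposition 6.2]{KS}), trying to show its Tannakian fundamental group is reductive, or more concretely by bounding the destabilizing subobjects of a periodic Higgs bundle along a Higgs--de Rham flow, as is done by hand in the rank two and elliptic-curve cases in the body of the paper.

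Essential surjectivity is the crux and is where genuinely new input is required. Given a periodic Higgs bundle $(E,\theta)$---automatically graded, semistable of degree zero, with nilpotent $\theta$---one must exhibit a periodic de Rham bundle with associated graded $(E,\theta)$. If one already had polystability, Simpson's correspondence over $\C$ would supply a semisimple flat bundle $(V,\nabla)$; then one has to (i) find a Griffiths-transverse filtration on $(V,\nabla)$ whose graded Higgs bundle is $(E,\theta)$---for a system of Hodge bundles the natural candidate is the decreasing filtration $\bigoplus_{j\ge i}E^j$ transported across the correspondence---and (ii) verify that $(V,\nabla,\mathrm{Fil})$ is periodic in the arithmetic sense of the definition in the introduction. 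Step (ii) is the single hardest point: it demands that the inverse Cartier transforms of the mod-$p$ reductions of $(E,\theta)$, for $p\gg 0$, carry Frobenius descent data that ``glue'' to the complex connection $\nabla$ furnished by Simpson---precisely the sought-for compatibility between non-abelian Hodge theory in characteristic zero and the Ogus--Vologodsky correspondence in characteristic $p$. I do not see how to produce this comparison with current tools; absent it, I expect the only rigorous output is the conditional statement of the first paragraph, together with the rank-one case, where periodic objects are torsion line bundles with zero Higgs field (Proposition~\ref{periodic_line_bundles}) and the equivalence is immediate.
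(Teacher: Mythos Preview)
The statement is a \emph{conjecture}, and the paper does not prove it. The paper's entire treatment is the one-line remark that the periodic Higgs conjecture ``implies the following subsidiary conjecture''; no further argument is given. Your first paragraph supplies exactly the details behind that remark---chasing the commutative square with the known equivalence $\Gr\colon\MDR(C)\to\MHG(C)$ and the two conjectural equivalences on the horizontal arrows---and this is precisely the intended (conditional) justification.

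Your second and third paragraphs go beyond anything the paper attempts: the paper does not discuss an unconditional approach to the Arithmetic Simpson correspondence, and your analysis correctly identifies that (i) full faithfulness would follow from the polystability conjecture via \cite[Lemma 5.5 (iii), Lemma 6.1]{KS}, and (ii) essential surjectivity is the genuine obstruction, requiring a compatibility between the complex Simpson correspondence and the mod-$p$ Ogus--Vologodsky correspondence that is not currently available. This diagnosis is accurate and in fact echoes material the authors themselves commented out of the paper (the ``Compatibility conjecture'' in the suppressed block). So there is no gap in your proposal; you have simply recognized, as the paper does, that the statement is open and recorded the conditional derivation together with a clear account of what is missing unconditionally.
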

	By Proposition \ref{periodic_line_bundles}, the periodic Higgs conjecture holds for rank one case. This has the following consequence.
	\begin{corollary}
		Let $U$ be a smooth curve defined over $\bar \Q$, and $(V,\nabla)$ be a rank one connection over $U/\bar \Q$. Suppose $(V,\nabla,Fil_{tr})$ is periodic with bounded period for $p$ running over a set of (Dirichlet) density one primes. Then $(V,\nabla)$ is motivic and hence the complex local system over $U^{an}$ corresponding to $(V,\nabla)$ has finite monodromy.
	\end{corollary}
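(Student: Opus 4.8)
The plan is to exploit that in rank one the Higgs-de Rham flow has almost no freedom, thereby reducing the hypothesis to the vanishing of the $p$-curvature of $(V,\nabla)$ along a density-one set of primes, and then to conclude from the known rank-one case of the Grothendieck-Katz $p$-curvature conjecture together with the realization of finite monodromy local systems inside the cohomology of finite covers. \emph{Step 1.} First I would fix a number field $K$ over which everything is defined, a spreading-out of $(U,V,\nabla)$ over an open $S\subseteq\Spec\sO_K$, and the density-one set $\Sigma$ of rational primes provided by the hypothesis. Because $V$ has rank one, every Higgs bundle occurring in the flow of $(V,\nabla,Fil_{tr})$ is a graded line bundle, so its Higgs field vanishes, and the Hodge filtration chosen at each stage is forced to be trivial. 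As is used already in the proof of Lemma \ref{criterion for uniformizing}, the inverse Cartier transform of a Higgs line bundle with zero Higgs field is simply its Frobenius pullback with the canonical connection, and a Frobenius-pullback connection has vanishing $p$-curvature. Hence every de Rham bundle appearing in the flow has vanishing $p$-curvature; so if $(V,\nabla,Fil_{tr})$ is periodic of period $f\geq 1$ at a point of $S$ over a prime $p\in\Sigma$ with respect to some $W_2$-lifting, then $(V_p,\nabla_p)$ is isomorphic to one of those de Rham bundles and therefore has vanishing $p$-curvature. Thus $(V,\nabla)$ has vanishing $p$-curvature modulo every prime $\mathfrak p$ of $K$ above $\Sigma$.

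\emph{Step 2.} Next I would invoke the Grothendieck-Katz $p$-curvature conjecture: since $(V,\nabla)$ has rank one its monodromy is abelian, and the conjecture is known in this case by Chudnovsky-Chudnovsky. The one subtlety is that I only have vanishing of the $p$-curvature along a set of primes of density one rather than a cofinite set; but in rank one this still suffices. Concretely, after normalizing its exponents modulo $\Z$, $(V,\nabla)$ is governed by its residues $\alpha_i\in\bar\Q$ at the points of $D=C\setminus U$ together with a class $\beta$ in the generalized Jacobian of $(C,D)$; vanishing of the $p$-curvature modulo $\mathfrak p$ forces every $\alpha_i\bmod\mathfrak p$ to lie in the prime field $\F_p$ and imposes a torsion-type congruence on $\beta$. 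Requiring this at every $\mathfrak p$ above a density-one set of primes $p$ means in particular that the number field generated by the $\alpha_i$ is split completely by a density-one set of primes, hence equals $\Q$, so the $\alpha_i$ are rational; and the specialization argument in the proof of Proposition \ref{periodic_line_bundles}, run over the finite base $S$, then forces $\beta$ to be torsion. Therefore $(V,\nabla)$ has finite monodromy.

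\emph{Step 3.} Finally I would promote finite monodromy to motivicity. Let $\chi\colon\pi_1(U^{an})\to\C^\times$ be the monodromy character, with finite image $G$, and let $\pi\colon U'\to U$ be the connected finite \'etale cover attached to $\ker\chi$; it extends to a finite morphism $C'\to C$ of smooth projective curves, which is a smooth projective morphism of relative dimension zero. The regular representation $\C[G]=\pi_*\C=R^0\pi_*\C_{U'}$ contains the character $\chi$, so the de Rham bundle $(V,\nabla)$ is a direct summand of the $0$-th parabolic Gau\ss-Manin system $R^0\pi_*(\sO_{C'},d)$ attached to $C'\to C$, the parabolic structure at $D$ being read off from the ramification of $C'\to C$; this exhibits $(V,\nabla)$ as a motivic de Rham bundle. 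The closing ``hence'' is already Step 2; alternatively, a motivic rank-one de Rham bundle underlies a polarizable $\C$-VHS which, being of rank one with zero Higgs field and degree zero, is unitary, and since every Galois conjugate of $(V,\nabla)$ is again motivic and hence unitary, the values of $\chi$ are algebraic numbers all of whose conjugates lie on the unit circle, so by Kronecker's theorem $\chi$ has finite order.

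\emph{The hard part} will be the density-one refinement in Step 2. Over a cofinite set of primes the rank-one $p$-curvature theorem would give the conclusion at once, but the density-one hypothesis is what is genuinely available and it is sharp: for $d+\alpha\,\frac{dx}{x}$ on $\G_m$ with $\alpha$ irrational of degree $d$ over $\Q$, the $p$-curvature vanishes on a set of primes of density $1/d>0$ while the monodromy is infinite. So the argument must carry out the ``residues plus Jacobian class'' bookkeeping and a Chebotarev argument rather than quoting the cofinite statement; the remaining steps are routine.
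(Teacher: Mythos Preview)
Your argument is sound and takes a genuinely different route from the paper's. You observe that in rank one the Higgs--de Rham flow is rigid---the Hodge filtration must be trivial, the Higgs field must vanish, and $C^{-1}(L,0)=(F^*L,\nabla_{can})$---so periodicity at $\mathfrak p$ forces $(V,\nabla)_{\mathfrak p}\cong (F^{*f}V_{\mathfrak p},\nabla_{can})$ and hence vanishing $p$-curvature. You then run a density-one refinement of the rank-one $p$-curvature conjecture directly on the de Rham side: a Chebotarev argument shows the residues are rational, and the Pink--Masser specialization input from Proposition~\ref{periodic_line_bundles} handles the remaining class. The paper proceeds differently: it first invokes Katz's monodromy theorem to obtain finite local monodromy and thereby reduce to the projective case, then works entirely on the \emph{Higgs} side, noting that Pink's theorem in the proof of Proposition~\ref{periodic_line_bundles} already goes through under a density-one hypothesis, so the associated Higgs line bundle is torsion and hence motivic; finally it lifts back to the de Rham side via the commutative square relating $\MDR_1^w$, $\PDR_1^w$, $\MHG_1^w$, $\PHG_1^w$. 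Your route is more self-contained and sidesteps both Katz's theorem and the categorical machinery; the paper's route embeds the result into that machinery, which is the organizing principle of the surrounding section. One small omission in your Step~2: you invoke residues without first arguing that $(V,\nabla)$ has regular singularities, but this already follows from Step~1, since $(F^{*f}V_s,\nabla_{can})$ extends without poles across the punctures in characteristic $p$, forcing the connection form to have at most simple poles in characteristic zero.
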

	Note that the statement does not follow directly from the solution of Grothendieck-Katz $p$-curvature conjecture in the case of rank one connections: the $p$-curvature conjecture requires a condition at \emph{all but finitely many primes}, not merely a Dirichlet density 1 set of primes. Under the assumption of the $p$-curvature conjecture, one knows that $(V,\nabla)$ modulo $\mathfrak{p}$ is torsion for all $\mathfrak{p}\gg 0$; however, a priori one has no control over the torsion order. Under our hypothesis, we only require a periodicity at a density 1 set of primes; this will force the torsion order of $(V,\nabla)$ modulo $\mathfrak p$ to be more well-behaved. We further note that this gives an alternative proof of \cite[Proposition 6.7]{KS}.
	\begin{proof}
		Let $C$ be the compactification of $U$, which is also defined over $\bar \Q$. By the celebrated monodromy theorem of N. Katz (see e.g. \cite[Theorem 8.1 (3)]{Kat82}), the local monodromy around each point of $C-U$ is of finite order. Consequently, after taking a finite \'etale cover $\pi: U'\to U$ over $\bar \Q$, the pullback connection $(V',\nabla')$ over $U'$ extends to a connection over $C'$, the compactification of $U'$. So we are reduced to the case when $U=C$ is projective. By this case, we notice that in the proof of Proposition \ref{periodic_line_bundles}, the celebrated theorem of R. Pink \cite{Pink} requires \emph{only} the density-one-primes condition. It follows that every rank one Higgs bundle over $C$ which is periodic for a set of density one primes is torsion, and hence motivic. If one changes the almost-all-primes condition in the definition of a periodic object over $C'$ by the density-one-primes condition, we shall obtain the categories $\PDR^w(C'), \PHG^w(C')$ of periodic objects in this new sense, as well as their full motivic subcategories $\MDR^w(C'), \MHG^w(C')$. We still obtain a commutative diagram of functors:
		\begin{equation*}\label{eq1}
			\begin{CD}
				\MDR_1^w(C')@>\iota>>\PDR_1^w(C')\\
				@V\Gr VV@  VV\Gr V\\
				\MHG_1^w(C')@>\iota>>\PHG_1^w(C'),
			\end{CD}
		\end{equation*}
		where the subscript 1 means a full subcategory consisting of rank one objects. Since the left vertical arrow is essentially surjective by definition and the bottom horizontal arrow is an equivalence of categories, the top horizontal arrow must be essentially surjective and hence an equivalence of categories too. (Actually, all arrows are equivalence of categories.) Since the complex local system associated to a rank one motivic de Rham bundle over $C'$ is of finite image, the statement follows.
	 \end{proof}

	\appendix
	\section{Quaternionic Shimura varieties and Deligne's mod\`ele \'etrange}\label{shimura_varieties}
	
	In this appendix, we collect necessary results on Shimura varieties, integral models, and Hecke correspondences. Our main sources are Carayol \cite{Car}, Deligne \cite{De72}, \cite{De79}, Kisin \cite{Ki}, Milne \cite{Milne} and Reimann \cite{R}. We explicate the important fact that the ``polydisk Shimura modular stacks" of \cite[\S 9]{CS} are indeed examples of Deligne's mod\`ele \'etrange.

	Let $F$ be a totally real field of degree $d$ over $\Q$ and let $D$ be a quaternion algebra over $F$ that is split at $r\geq 1$ real places of $F$. Labelling the places $\tau_1,\dots,\tau_d$, we suppose the first $r$ split $D$ and we fix splittings $s_i: D\otimes_{F,\tau_i}\R\cong M_{2\times 2}(\R)$ for $1\leq i \leq r$.

	\begin{definition}\label{arithmetic lattice}Pick a maximal order $\sO_D\subset D$. Via the splittings we may embed $$\Gamma:=\sO^{N=1}_D\to \SL_2(\R)^r,$$
		where $N=1$ signifies ``norm=1". For a finite index subgroup $\Gamma'\subset \Gamma$, the quotient $[X_{\Gamma'}]:=[\Gamma'\backslash(\HH)^r]$ is the analytification of a connected complex Deligne-Mumford (DM) stack. We call $[X_{\Gamma'}]$ a \emph{connected quaternionic Shimura stack}, and $X_{\Gamma'}$ a \emph{connected quaternionic Shimura variety}. 
	\end{definition}
	

	\begin{definition}\label{def:quaternionic_shimura}Consider $G:=\textrm{Res}_{F/\Q}(D^\times)$ as an algebraic group over $\Q$. We get $G_{\R}$-conjugacy class $X$ of cocharacters $$\SSS\rightarrow G_{\R}\cong \GL_2(\R)^r\times \textrm{U}(2)^{d-r}$$ by sending an element $z=x+\sqrt{-1}y \in \SSS(\R)=\C^*$ to
		$$(\begin{pmatrix}x & y\\ -y&x\end{pmatrix},\dots,\begin{pmatrix}x & y\\ -y&x\end{pmatrix},1,\cdots,1).$$
		Then $(G,X)$ is a Shimura datum. We have the explicit description: $X=(\P^1(\C)\backslash\P^1(\R))^r$.
		Let $\A_f$ denote the finite ad\`eles over $\Q$. We obtain an action of $G(\Q)$ on $X $ via the embedding $G(\Q)\hookrightarrow G(\R)$. Then the \emph{quaternionic Shimura variety} associated to $(G,X)$ is the following quotient:
		$$Sh(G,X):=G(\Q)\backslash G(\A_f)\times X.$$
		This is the inverse limit of ad\`elic double quotients
		$$Sh_K(G,X):=G(\Q)\backslash G(\A_f)\times X/K,$$
		where $K$ runs through sufficiently small (i.e., neat) open compact subgroups of $G(\A_f)$ \cite[Theorem 5.28]{Milne}.
	\end{definition} 
	In the general formulation, potential disconectedness is built in: $Sh_K(G,X)$ is a finite union of connected Shimura varieties. If $T:=\textrm{Res}_{F/\Q}\mathbb{G}_m$, then there is a map $\nu\colon T\rightarrow \mathbb{G}_m$ given by ``reduced norm" (with kernel $G_1$, the derived group of $G$).

	If $D/F$ is not globally split and $d>1$, the Shimura variety $Sh(G,X)$ is not naturally a moduli space of abelian varieties. However, by using Deligne's \emph{mod\`eles \'etranges}, we can construct (non-canonically) a different Shimura variety with isomorphic geometrically connected components that does underlie a ``modular" PEL-type family of abelian varieties (\S6 \cite{De72}, \S2.7 \cite{De79}, \S2 \cite{Car}, \S1-2 \cite{R}). We follow \cite[\S 2]{Car} closely.

	Choose a totally negative element $\lambda\in F$ and an $\alpha=\sqrt{\lambda}$. Set $E:=F(\alpha)$ with involution $z\mapsto \bar z$. Following Carayol, we always consider $E$ as a fixed subfield of $\C$. Let $B:=D\otimes_F E$. Let $T_E:=\textrm{Res}_{E/\Q}\mathbb G_m$ and let $U_E\subset T_E$ be the subgroup determined by $z\bar z=1$. Let $G'':=G*_{Z(G)} T_E$ be the \emph{amalgamted product} (a.k.a. coproduct).

	Consider the homomorphism $T_E\rightarrow T\times U_E$ given as $z\mapsto (z\bar z, z/{\bar z})$. This agrees on $Z(G)\cong T$ with the homomorphism $G\rightarrow T\times U_E$ given as $g\mapsto (\nu(g),1)$. Hence there is an induced homomorphism
	$$G''\xrightarrow{\nu'} T\times U_E,$$
	defined by $(g,z)\mapsto (\nu(g)z\bar z,z/\bar{z})$. Consider the subtorus $T':=\mathbb{G}_m\times U_E\subset T\times U_E$ and let $G':=(\nu')^{-1}(T')$. Then the derived groups of $G$ and $G'$ are isomorphic. We now follow \cite[2.13]{Car} to construct $X'$ so that $(G',X')$ is a Shimura datum.
	
	Abusing notation, we choose complex embeddings $\tau_1,\dots,\tau_d\colon E\rightarrow \C$ which restrict to the ones of $F$, which yield an isomorphism
	$$T_E(\R)=(E\otimes_{\Q}\R)^{\times}\cong (\C^{\times})^d.$$
	Let $h_E\colon \SSS\rightarrow (T_E)_\R$ be the map $z\mapsto (1,\dots,1,z,\dots,z)$ where the first $r$ entries are ``1". Then the projection $h\times h_E\colon \SSS\rightarrow (G*_T T_E)_\R$ factors through a map $h'\colon \SSS\rightarrow G'_{\R}$. The $G'(\R)$-conjugacy class of $h'$ is denoted by $X'$. In particular, we obtain a Shimura datum $(G',X')$.
	
	We now explain how to realize this Shimura datum as being of PEL type \cite[\S 2.2]{Car}. Denote by $b\mapsto \bar b$ the anti-involution on $B$ which is tensor product of the canonical anti-involution on $D/F$ and the complex conjugation on $E$. Here, we follow the notation of Caroyol. Later, to avoid confusion with other involutions, we name the above anti-involution $\text{inv}_1$. Let $\delta\in B$ be a symmetric element: $\delta = \bar \delta$. Then we may construct a new anti-involution on $B$:
	$$b\mapsto b^*:=\delta^{-1}\bar b \delta.$$
	Later, we refer to this anti-involution as $\text{inv}_2$, but for the following we stick to Caroyol's notation. Let $V$ be the vector space underlying $B$. Then we have the following forms on $V$ (where $\textrm{tr}_{B/E}$ denotes the reduced trace):
	\begin{align*}
		\displaystyle \psi_L(v,w)&:=\alpha \textrm{tr}_{B/E}(v\delta w^*),\\
		\displaystyle \psi(v,w)&:=\textrm{Tr}_{E/\Q}(\alpha \textrm{tr}_{B/E}(v\delta w^*)).
	\end{align*}
	Here, $\psi$ is symplectic and non-degenerate. One checks that $G'$ is isomorphic to the group of $B$-linear symplectic similtudes $GSP(V,\psi)$, where the action is given as $b.v:=vb^*$. In particular, for any small open compact $K'\subset G'(\A_f)$, the Shimura variety $Sh_{K'}(G',X')$ carries a ``universal" family of abelian varieties of dimension $4d$ with multiplication some order $\sO\subset B$.
	
	Now, we claim that the Shimura varities we thus obtain include those constructed by Corlette-Simpson in \cite[\S 9]{CS}. We quickly review their formulation (and we use largely their notations for ease of translation).
	
	Let $L$ be a CM field, with ring of integers $\sO_L$, let $P$ be a rank 2 projective $\sO_L$-module, and let $\Phi\colon P\otimes P\rightarrow \sO_L$ be an anti-Hermitian form with respect to the natural complex conjugation $\bar{\ }\colon L\rightarrow L$. (Corlette-Simpson use $\iota$ to denote complex conjugation.) Set $\mathcal U(P,\Phi)$ to be the group of $L$-linear transformations of $P$ that preserve $\Phi$. This is a lattice in the group $\mathcal U(P_L,\Phi)$. 
	
	Corlette-Simpson define the notion of a \emph{variation of Hodge structure of type $(P,\Phi)$ on a smooth variety} on p. 1318 of \emph{loc. cit.} Set $\mathcal D(P,\Phi)$ to be the associated period domain, which is isomorphic to $\HH^r$, where $r$ is half the number of mixed embeddings for $\Phi$. Then, Theorem 9.2 of \emph{loc. cit.} states that the moduli problem of VHS of type $(P,\Phi)$ is represented by a unique smooth DM stack, the analytification of which is isomorphic to $\mathfrak{H}^{an}(P,\Phi):=\mathcal D(P,\Phi)/\mathcal U(P,\Phi)$.  Let $\sV$ be the tautological rank two local system of type $(P,\Phi)$ over $\mathfrak{H}^{an}(P,\Phi)$.
	\begin{lemma}\label{polydisk}
		Let $(L,P,\Phi)$ be as above and assume $r>0$ (that is, there is at least one mixed embedding for $\Phi$).  Let $F\subset L$ be the totally real subfield of index two. Then there exists a quaternion algebra $D/F$, split exactly over $r$ real places of $F$, and a choice of $\alpha$ and $\delta$ such that the group of $B:=D\otimes_FL$-linear symplectic similtudes $GSP(P_L,\psi)$ is isomorphic to $\mathcal{U}(P_L,\Phi)$. Therefore, if $X$ is a polydisk Shimura modular DM stack of Corlette-Simpson, then it is a connected quaternionic Shimura stack. Moreover, for each mixed embedding $\sigma: L\to \C$, there is a unique $1\leq i\leq r$ such that $\sV\otimes_{L,\sigma}\C\cong \L_i\otimes _{\R}\C$, where $\{\L_i\}_{1\leq i\leq r}$ are tautological rank two $\R$-local systems over the connected quaternionic Shimura stack, as defined in the proof of Lemma \ref{modular_characterization}.
	\end{lemma}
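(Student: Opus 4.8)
The plan is to make explicit the principle, implicit in \cite[\S9]{CS}, that a rank two anti-Hermitian space over a CM field $L$ is precisely the datum of a quaternion algebra over its maximal totally real subfield $F$ together with an imaginary quadratic twist, so that the Corlette--Simpson moduli stack $\mathfrak{H}^{an}(P,\Phi)$ becomes an instance of Carayol's formulation of Deligne's mod\`ele \'etrange. First I would attach to $(P,\Phi)$ its unitary $F$-group $\mathcal U(P_L,\Phi)$ and its derived subgroup $\mathcal{SU}(P_L,\Phi)$. Since $\Phi$ is a non-degenerate rank two anti-Hermitian form for the quadratic extension $L/F$, the latter is semisimple and simply connected of type $A_1$; base-changing to $L$ splits $L/F$ and identifies $\mathcal{SU}(P_L,\Phi)_L$ with $\SL_2$. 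Hence by the classification of $F$-forms of $\SL_2$ there is a unique quaternion $F$-algebra $D$ with $\mathcal{SU}(P_L,\Phi)\cong D^{N=1}$ (the reduced norm one elements), and the identification over $L$ forces $D\otimes_F L\cong M_2(L)$, i.e. $L$ embeds into $D$ as a maximal subfield. The local invariants of $D$ then come from those of $\Phi$: $D$ ramifies at a finite place exactly where $\Phi$ is anisotropic, and at a real place $v$ of $F$ the group $\mathcal{SU}(P_L,\Phi)(F_v)$ is isomorphic to $SU(1,1)\cong\SL_2(\R)$ or to the compact $SU(2)$ according as the Hermitianization $\alpha_v\Phi_v$ has signature $(1,1)$ or $(2,0)$, the first case being the definition of a mixed embedding. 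Thus $D$ is split at exactly the $r$ mixed real places of $F$, and the hypothesis $r>0$ guarantees that there is at least one.

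Next I would feed $D$ into Carayol's recipe and check the two sets of data agree. Because $L$ is a totally imaginary quadratic extension of the totally real field $F$, one may write $L=F(\alpha)$ with $\alpha^2=\lambda\in F$ totally negative, which is exactly Carayol's choice of $\alpha$; then $E:=F(\alpha)=L$ and $B:=D\otimes_F E=D\otimes_F L\cong M_2(L)$. I would then exhibit a symmetric element $\delta\in B$ for which Carayol's symplectic form $\psi(v,w)=\mathrm{Tr}_{L/\Q}\!\big(\alpha\,\mathrm{tr}_{B/L}(v\delta w^{*})\big)$ recovers $\Phi$: the passage from the second-kind involution $\mathrm{inv}_2=*$ on $B$ (together with the element $\delta$) to an $L/F$-anti-Hermitian form on the simple $B$-module is the standard adjunction between $*$-compatible symplectic forms and Hermitian forms on the algebra-with-involution $(M_2(L),*)$, and the idempotents of $M_2(L)$ cut Carayol's $V=B$ into two copies of the standard module, each isometric to $(P_L,\Phi)$ after rescaling $\delta$ suitably. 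Consequently the group $GSP(P_L,\psi)$ of $B$-linear symplectic similitudes is identified, up to the evident multiplier normalization that does not affect the derived group, with $\mathcal U(P_L,\Phi)$; this is part (1). Part (2) follows, since the complex points and the uniformizing lattices only depend on $\mathrm{Res}_{F/\Q}D^{N=1}$: the stack $\mathfrak{H}^{an}(P,\Phi)=\mathcal D(P,\Phi)/\mathcal U(P,\Phi)$ is then of the form $\Gamma'\backslash\HH^{r}$ for an arithmetic lattice commensurable to the image of $\sO_D^{N=1}$ in $\SL_2(\R)^r$, i.e. a connected quaternionic Shimura stack in the sense of Definition \ref{arithmetic lattice}.

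For part (3) I would trace the tautological objects through this identification. The local system $\sV$ over $\mathfrak{H}^{an}(P,\Phi)$ has fiber $P$ and monodromy the tautological inclusion $\pi_1\hookrightarrow\mathcal U(P,\Phi)\subset\mathcal U(P_L,\Phi)(F)$, so for a mixed embedding $\sigma\colon L\to\C$ the complexification $\sV\otimes_{L,\sigma}\C$ is the rank two $\C$-local system obtained by further composing with $\mathcal U(P_L,\Phi)(F)\hookrightarrow\mathcal U(P_L,\Phi)(F_v)$ and projecting to the corresponding $SU(1,1)$-factor. On the other side, the local system $\L_i$ in the proof of Lemma \ref{modular_characterization} is by definition the rank two $\R$-local system cut out by $\pi_1\to\sO^1\subset\SL_2(\R)^r\xrightarrow{pr_i}\SL_2(\R)$. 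Under $D^{N=1}\cong\mathcal{SU}(P_L,\Phi)$ and the real-place identifications above, the mixed embeddings $\sigma$ correspond bijectively to the split real places of $D$, hence to $\{1,\dots,r\}$, and the $i$-th $\SL_2(\R)$-factor matches the $SU(1,1)$-factor at $\sigma$; this produces the unique $i$ with $\sV\otimes_{L,\sigma}\C\cong\L_i\otimes_\R\C$, uniqueness holding because distinct mixed embeddings land in distinct factors.

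The delicate point I expect to be the main obstacle is the second step: one must choose $\alpha$ and $\delta$ so that Carayol's construction reproduces the quaternion algebra $D$ and the Hermitian module $(P_L,\Phi)$ \emph{on the nose}, not merely up to an abstract isomorphism of $A_1$-forms. This requires matching the local invariants of $D$ at all finite places of $F$ (through a correct choice of the symmetric element $\delta$), handling the factor-of-two discrepancy between $\dim_\Q P_L$ and $\dim_\Q B$ via the idempotent splitting of $M_2(L)$, and verifying that the resulting $F$-group isomorphism is compatible with the Shimura data — the homomorphism $h'$ and its conjugacy class $X'$ — so that the two moduli stacks, and not merely their underlying groups, coincide.
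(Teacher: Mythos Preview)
Your outline is correct in spirit and would lead to a proof, but it takes a genuinely different route from the paper. You extract $D$ abstractly from the classification of $F$-forms of $\SL_2$, then argue by local invariants that it splits at exactly the $r$ mixed real places, and finally appeal to the general adjunction between involutions of the second kind and Hermitian forms to match Carayol's $(B,\psi,\delta)$ with $(P_L,\Phi)$. The paper instead defines $D$ \emph{concretely} as the $F$-subalgebra of $\End_L(P_L)$ consisting of operators self-adjoint for the Hermitian form $\tilde\Phi:=\alpha\Phi$; this makes $D\otimes_F L\cong\End_L(P_L)$ tautological, and then $\delta$ is produced by a direct Skolem--Noether argument conjugating $\mathrm{inv}_1$ to the standard conjugate-transpose. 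The isomorphism $\mathcal U(P_L,\Phi)\cong G'$ is then checked by an explicit matrix computation: one sends $A\in\mathcal U(P_L,\Phi)$ to left multiplication $L_A$ on $V=B$, verifies by hand that this respects $\psi_L$ and $\psi$, and conversely that any $\varphi\in\mathrm{Aut}_\Q(V,\psi,B)$ is of the form $L_A$ with $\bar A^t H A=H$.

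What each approach buys: the paper's concrete definition of $D$ sidesteps entirely the ``delicate point'' you flag in your final paragraph, since no matching of local invariants or post-hoc choice of $\delta$ is needed --- $D$ and $\delta$ are built out of $\Phi$ from the start, so compatibility is automatic. Your approach is more conceptual and explains \emph{why} the construction works (Galois cohomology of $\SL_2$), but leaves as an exercise precisely the step the paper carries out explicitly. Your hedge ``up to the evident multiplier normalization'' is also unnecessary: the paper shows the groups are isomorphic on the nose, not merely on derived subgroups, via the trace computation $\mathrm{Tr}_{L/\Q}(\alpha\,\mathrm{tr}_{B/L}(\cdot))$. For part (3), your argument and the paper's are essentially the same: both trace the tautological local systems through the embedding-by-embedding identification of $\SL_2(\R)$-factors with $SU(1,1)$-factors at the mixed places.
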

	\begin{proof}
		Write $F=L(\sqrt{a})$, where $a\in F$ is a totally negative element. Set $\alpha = \sqrt{a}\in L$. Set $\tilde{\Phi}:=\alpha \Phi$. Then $\tilde{\Phi}$ is a Hermitian form. We define the algebra $D$ as follows:
		$$D:=\{A\in\text{End}_L(P_L)| \tilde{\Phi}(Au,v)=\tilde{\Phi}(u,Av), \ \forall u,v\in P_L\}.$$
		Then $D$ is an $F$-subalgebra of $\text{End}_L(P_L)$. Then $D$ is a quaternion algebra over $F$. Indeed, it is easy to see that $\dim_F(D)=4$ and that $F$ is in the center of $D$. Moreover, it is simple because $D\otimes_F L\cong \text{End}_L(P_L)$ is simple.  
		
		Fix an isomorphism $\text{End}_L(P_L)\cong M_{2\times 2}(L)$ which induces $s: D\otimes_F L\cong M_{2\times 2}(L)$. Recall that we had above defined an anti-involution on $D\otimes_F L$, given by the tensor product of the natural anti-involution on $D$ and complex conjugation on $L$. To avoid confusion, we write this anti-involution as $\text{inv}_1\colon D\otimes_F L \rightarrow D\otimes_F L$. Next, we claim that $\text{inv}_1$ may be conjugated to be the usual conjugate-transpose map, after transport of structure.
		
		Using the fixed splitting of $D\otimes_F L$, we consider $\text{inv}_1$ as an anti-involution on $M_{2\times 2}(L)$. Then the map:
		
		$$b\longmapsto \overline{\text{inv}_1(b)}^t,$$
		obtained by composition $\text{inv}_1$ and usual ``conjugate transpose'', is an involution on $M_{2\times 2}(L)$, and is moreover $L$-linear. By the Skolem-Noether theorem, there exists $g\in \text{GL}_2(L)$ such that:
		
		$$gbg^{-1} = \overline{\text{inv}_1(b)}^t,$$
		for all $b\in M_{2\times 2}(L)$. Moreover, as $g*\text{inv}_1(b)*g^{-1} = \overline{b}^t$, we see that $\text{inv}_1(g)* b * \text{inv}_1(g)^{-1} = \text{inv}_1(\overline{b}^t)$. On the other hand, $\text{inv}_1$ and conjugate transpose commute (use the fact that a group in which every element has order 2 is commutative). This implies that $g$ is symmetric, i.e., that $\text{inv}_1(g) = g$. Set $\delta :=g$.
		
		Now that we have nailed down all of the choices, we must show that the group $G'$ of $B$-linear symplectic similtudes and $\mathcal U(P_L,\Phi)$ are isomorphic as algebraic groups. We first construct a natural map:
		
		\begin{equation}\label{eqn:CS_to_Car}\mathcal U(P_L,\Phi)\rightarrow \text{Aut}_L(V,\psi_L,B).\end{equation}
		As $V = B= D\otimes_F L\cong M_{2\times 2}(L)$, the definition of the map is straightforward: we send a matrix $A\in \mathcal U(P_L,\Phi)$ to the transform ``left multiplication by $A$'', which we denote by $L_A$. It is immediate to check that the map respects $\psi_L$ and that $L_A$ commutes with the action of $B$ on $V$. (Recall that $b.v:=vb^*=v*\text{inv}_1(b)$.)
		
		We now claim that the map in Equation \ref{eqn:CS_to_Car} is an isomorphism of groups. The map is clearly injective, so our task is to show that it is surjective. First of all, we claim that any $\varphi \in \text{Aut}_L(V,B)$ is automatically of the form $L_A$ for some $A\in \text{GL}_2(L)$. This is essentially because $B$ acts transitively on $V$. More precisely, if $\varphi \in \text{Aut}_L(V,B)$, then $$\varphi(b) = \varphi( b^*.\begin{pmatrix}1&0\\0&1\end{pmatrix}) = b^*.\varphi(\begin{pmatrix}1&0\\0&1\end{pmatrix}) = \varphi(\begin{pmatrix}1&0\\0&1\end{pmatrix})b,$$
		which implies that $\varphi = L_A$ for $A=\varphi(\begin{pmatrix}1&0\\0&1\end{pmatrix})$.
		
		Next we must check that $\varphi = L_A \in \text{Aut}_L(V,\psi_L,B)$ implies that $A\in \mathcal U(P_L,\Phi)$, i.e., that $A$ respects the anti-Hermitian form $\Phi$ (or, equivalently, the Hermitian form $\tilde{\Phi}$). This is a straightforward computation. Pick a basis of $P_L$ in which (the transport of structure of) $\text{inv}_1$ is simply conjugate transpose, and let $H$ be the matrix of of the Hermitian form $\tilde{\Phi}$ in this basis. Then the defining condition on $\varphi$ is that:
		$$\psi_L(\varphi(w'),\varphi(w)) = \psi_L(w',w),\ \forall w,w'\in V.$$
		As $\varphi = L_A$, we may expand this out into the following:
		$$\text{Tr}_{B/L}(Aw'\overline{w}^t\bar{A}^t H) = \text{Tr}_{B/L}(w'\overline{w}^tH).$$
		(Here, we simply use the definition of $\psi_L$ together with the fact that in our basis, $\text{inv}_1$ is simply conjugate transpose.) On the other hand, the left hand side is $\text{Tr}_{B/L}(w'\overline{w}^t\bar{A}^t HA)$ as the trace is invariant under cyclic permutation. By plugging in the matrices $w=w'=E_{ij}$, where $E_{ij}$ is a matrix whose only non-zero entry is 1 in position $(i,j)$, for $i,j \in \{1,2\}$, one deduces that $H = \bar{A}^tHA$, which in particular implies that $A\in \mathcal U(P_L,\Phi)$.
		
		Now, there is a natural composition: 
		$$\mathcal U(P_L,\Phi)\rightarrow \text{Aut}_L(V,\psi_L,B)\rightarrow G'= \text{Aut}_{\Q}(V,\psi,B),$$
		and we claim it is an isomorphism. First of all, if $\varphi' \in G'$, then $\varphi'$ is automatically $L$-linear because of the $B$-equivariance. (Recall that $L$ is the center of $B$.) Now, we may write $\varphi' = L_{A}$, for some $A\in \text{GL}_2(L)$. Then $L_{A}$ is in $\text{Aut}_{\Q}(V,\psi,B)$ if and only if the following equality holds for all $w,w'\in V$: 
		$$Tr_{L/\Q}(\alpha Tr_{B/L}(w'\overline{w}^t\overline{A}^tHA))=Tr_{L/\Q}(\alpha Tr_{B/L}(w'\overline{w}^tH)).$$
		
		Set $u_1=\overline{w}^t\overline{A}^tHA$ and $u_2=\overline{w}^tH$. Now, the form $Tr_{L/\Q}(\alpha Tr_{B/L}(\cdot,\cdot): V\otimes V\to \Q$ is non-degenerate. Varying $w'\in V$, it immediately follows that $u_1=u_2$. Setting $w=\begin{pmatrix}1&0\\0&1\end{pmatrix}$, we get further that $A^*HA=A$. This implies that $A\in \mathcal U(P_L,\Phi)$, exactly as desired.	
		
		Finally, let $\{\sigma_i, \overline{\sigma}_i\}_{1\leq i\leq d}$ be the set of complex embeddings of $L$ such that the first $2r$ embeddings are mixed for $\Phi$. Recall that the tautological rank two $L$-local system $\sV\otimes_{\sO_L}L$ is induced by the standard representation of $\sU(P_L,\Phi)\subset \text{End}_L(P_L)\cong M_{2\times 2}(L)$ on $L^{2}$.  Only for $1\leq i\leq r$ are $\sV\otimes_{L,\sigma_i}\C$ as well as its complex conjugation $\sV\otimes_{L,\overline{\sigma}_i}\C$ non-unitary.  Set $\tau_i$ to be the restriction of $\sigma_i$ to $F$.  Fix an isomorphism
		$$
		s_i:D\otimes_{F, \tau_i}\R\cong M_{2\times 2}(\R)\ \textrm{or}\ H, 
		$$
		depending on $\tau_i$ is a splitting real place for $D$ or not (here $H$ is the Hamilton's quaternion algebra). As
		$$
		s_i\otimes id: (D\otimes_{F, \tau_i}\R)\otimes_{\R}\C\cong M_{2\times 2}(\C),
		$$
		the standard representation of $(M_{2\times 2}(\C))^{\times}$ on $\C^2$ induces a tautological rank two $\C$-local system of $\M_i$ over $X$. Moreover, $\M_i$ is non-unitary if and only if $\tau_i$ is a splitting prime. In that case, $\M_i\cong \L_i\otimes_{\R}\C$, where $\L_i$ is defined as in Lemma \ref{modular_characterization}. Therefore,  by composing the isomorphisms $s\otimes id,s_i\otimes id$ with the following natural isomorphism
		$$
		(D\otimes_{F}L)\otimes_{L,\sigma_i}\C \cong (D\otimes_{F,\tau_i}\R)\otimes_{\R}\C,
		$$
		we obtain an isomorphism of $\C$-local systems between $\sV\otimes _{L,\sigma_i}\C$ and $\M_i$ (so is its complex conjugation $\sV\otimes _{L,\overline{\sigma}_i}\C$). It follows that exactly the first $r$ real places $\{\tau_i\}_{1\leq i\leq r}$ splits $D$, and for such an $i$, there are isomorphisms of $\C$-local systems over $X$:
		$$
		\sV\otimes _{L,\sigma_i}\C\cong  \L_i\otimes_{\R}\C \cong \sV\otimes _{L,\overline{\sigma}_i}\C.
		$$
		The lemma is proved.
	\end{proof}

	\begin{definition}\label{shimura_curve_datum}Let $F$ be a totally real number field and let $D$ be a  quaternion division algebra over $F$ that is split at exactly one real place. The \emph{Shimura curve datum} associated to $F$ and $D$ is the quaternionic Shimura datum $(G,X)$ described in Definition \ref{def:quaternionic_shimura}.
		
	\end{definition}
	
	We now discuss Hecke correspondences. Let $(G,X)$ be a Shimura datum and $g\in G(\A_f)$. Then, $K_g:=gKg^{-1}\cap K\subset G(\A_f)$ is also an open compact subgroup. The \emph{Hecke correspondence} $T_g$ associated to $g$ is \[\xymatrix{ & Sh_{K_g}(G,X)\ar[dl]_s \ar[dr]^t & \\ Sh_K(G,X) & & Sh_K(G,X)}\]
	
	Here the first map is given by the natural inclusion $K_g\subset K$ and the second map is given by the inclusion $K_g \subset g\Gamma g^{-1}\cong K$ where the second identification comes from conjugation by $g$. This is a finite \'etale correspondence of smooth, quasi-projective, \emph{not-necessarily connected} algebraic varieties, defined over $E$: see \cite[Ch. 12-14]{Milne}, and especially 13.6 of \emph{loc. cit} for the Hecke-equivariance property.
	
	\begin{remark}Let $(G,X)$ be a Shimura curve datum, given by $F$ and $D$. Then the reflex field $E(G,X)$ is isomorphic to $F$ \cite[1.1.1]{Car}.
		
	\end{remark}

	For the Siegel Shimura datum $(GSP_{2n},S^{\pm})$, Hecke correspondences have a pleasant property. First we set some notation: if $K\subset GSP_{2n}(\A_f)$ is an open compact subgroup, we write $\scrA_{n,1,K}$ for the Shimura variety $Sh_K(GSP_{2n},S^{\pm})$; intuitively, it is the \emph{moduli space of principally polarized abelian varieties with $K$-level structure}. Let $g\in GSP_{2n}(\A_f)$. Consider the Hecke correspondence \[\xymatrix{ & \scrA_{n,1,K_g}\ar[dl]_s \ar[dr]^t & \\ \scrA_{n,1,K} & & \scrA_{n,1,K}}\]Let $\scrU\rightarrow \scrA_{n,1,K}$ be the universal family of abelian varieties over $\scrA_{n,1,K}$. Then $s^*\scrU$ is isogenous to $t^*\scrU$.

	Let $(G,X)$ denote a Shimura curve datum, let $p$ be a prime of $\Q$, and let $\mathfrak p|p$ be a prime of $F$. Suppose $K\subset G(\A_f)$ is hyperspecial at $p$ and neat. Then it follows from \cite[\S 6.1]{Car} that $Sh_K(G,X)$ has a canonical integral model $\scrS_K(G,X)$ over $\sO_{F,\mathfrak p}$. Now let $(G,X)\hookrightarrow (GSP_{2n},S^{\pm})$ realize $(G,X)$ as being of Hodge type. Then by \cite[2.1.2]{Ki} there exists $K'\subset GSP_{2n}(\A_f)$, hyperspecial at $p$, and an induced map of integral canonical models: $$\scrS_K(G,X)\rightarrow\scrA_{n,1,K'}$$
	
	which are smooth schemes over $\sO_{E,\pP}$. In particular, from this data we obtain a ``universal" abelian scheme over $\scrS_K(G,X)$.

	\begin{corollary}\label{heckepullbackisogenous}Let $(G,X)\hookrightarrow (GSP_{2n},S^{\pm})$ be a Hodge-type Shimura datum with reflex field $E$. Let $p$ be a rational prime and let $\pP$ be a prime of $E$ over $p$. Let $K=K_pK^p\subset G(\A_f)$ be a sufficiently small open compact subgroup with $K_p\subset G(\Q_p)$ hyperspecial. Let $g\in G(\A^p_f)$ and consider the Hecke correspondence of integral models over $\sO_{E,\pP}$
		
		\[\xymatrix{ & \scrS_{K_g}(G,X)\ar[dl]_s \ar[dr]^t & \\ \scrS_{K}(G,X) & & \scrS_{K}(G,X)}\]
		
		Let $\scrU_G\rightarrow \scrS_{K}(G,X)$ be the family of abelian schemes, induced from a morphism $\scrS_K(G,X)\hookrightarrow\scrA_{n,1,K'}$ as above. Then $s^*\scrU_G$ is isogenous to $t^*\scrU_G$.
		
	\end{corollary}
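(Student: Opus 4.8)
The plan is to reduce the assertion to the corresponding moduli‑theoretic statement on the Siegel side — recalled in the paragraph preceding the corollary — by exploiting the Hecke‑equivariance of Kisin's integral canonical models. First I would transport the Hecke datum across the symplectic embedding. Since $\iota\colon (G,X)\hookrightarrow (GSP_{2n},S^{\pm})$ is defined over $\Q$, it carries $g\in G(\A^p_f)$ to an element $\iota(g)\in GSP_{2n}(\A^p_f)$ and the hyperspecial subgroup $K_p$ into a hyperspecial subgroup of $GSP_{2n}(\Q_p)$. Fixing a sufficiently small open compact $K'\subset GSP_{2n}(\A_f)$, hyperspecial at $p$, with $\iota(K)\subseteq K'$ and small enough that the morphism $\scrS_K(G,X)\to\scrA_{n,1,K'}$ of \cite[2.1.2]{Ki} is available — this being the morphism defining $\scrU_G$ as a pullback of the universal abelian scheme $\scrU$ — one has $\iota(K_g)\subseteq K'_{\iota(g)}:=\iota(g)K'\iota(g)^{-1}\cap K'$, so the prime‑to‑$p$ Hecke correspondence $T_{\iota(g)}$ on the Siegel side, with middle term $\scrA_{n,1,K'_{\iota(g)}}$ and legs $\bar s,\bar t$, is defined, together with morphisms from the $(G,X)$‑side correspondence into it.

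Next I would invoke functoriality of the integral models. On generic fibres, the period morphisms $Sh_{K_\bullet}(G,X)\to\scrA_{n,1,\bullet}$ are equivariant for the prime‑to‑$p$ Hecke action by the classical Hecke‑equivariance of canonical models \cite[\S 13]{Milne}. Because Kisin's integral model over $\sO_{E,\pP}$ is pinned down by an extension (N\'eron‑type) property with respect to smooth $\sO_{E,\pP}$‑schemes \cite[2.3.8]{Ki}, the legs $s,t$ of $T_g$ on $\scrS_{K_g}(G,X)$ and the period morphisms all extend uniquely from their generic fibres, and hence the square
\[
\begin{CD}
\scrS_{K_g}(G,X) @>>> \scrA_{n,1,K'_{\iota(g)}}\\
@V{s}VV @VV{\bar{s}}V\\
\scrS_K(G,X) @>>> \scrA_{n,1,K'}
\end{CD}
\]
commutes, as does the analogous square with $s,\bar s$ replaced by $t,\bar t$ — commutativity being checkable on the dense generic fibre. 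To finish, recall that on the Siegel side, since $\scrA_{n,1,\bullet}$ is a moduli space of principally polarised abelian varieties with level structure and $\iota(g)\in GSP_{2n}(\A^p_f)$, there is a canonical prime‑to‑$p$ isogeny $\bar{s}^{*}\scrU\to\bar{t}^{*}\scrU$ of abelian schemes over $\scrA_{n,1,K'_{\iota(g)}}$; being moduli‑theoretic, it is defined already over $\Z_{(p)}$, hence over $\sO_{E,\pP}$. Since $\scrU_G$ is by definition the pullback of $\scrU$ along $\scrS_K(G,X)\to\scrA_{n,1,K'}$, the two commuting squares identify $s^{*}\scrU_G$ and $t^{*}\scrU_G$ with the pullbacks to $\scrS_{K_g}(G,X)$ of $\bar{s}^{*}\scrU$ and $\bar{t}^{*}\scrU$; pulling back the Siegel‑side isogeny along $\scrS_{K_g}(G,X)\to\scrA_{n,1,K'_{\iota(g)}}$ then yields the desired isogeny $s^{*}\scrU_G\to t^{*}\scrU_G$.

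The step I expect to be the main obstacle is the compatibility of the period morphism with the Hecke correspondences over $\sO_{E,\pP}$, i.e.\ the commutativity of the square above. The crucial input is Kisin's extension property for $\scrS_K(G,X)$: it forces any morphism out of a smooth $\sO_{E,\pP}$‑scheme to be determined by its restriction to the generic fibre, which reduces the commutativity to a statement over $E$ where it is the standard Hecke‑equivariance of canonical models. A minor but essential point to keep track of is that $g$ lies in $G(\A^p_f)$, so that the $p$‑component of the level is untouched: this is precisely what guarantees both that $\scrS_{K_g}(G,X)$ again carries a Kisin integral model and that the Siegel‑side isogeny is prime to $p$ and therefore extends over the integral base.
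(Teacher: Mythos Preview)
Your proposal is correct and follows essentially the same route as the paper: push the Hecke correspondence to the Siegel side via the symplectic embedding, use the moduli-theoretic isogeny $\bar s^*\scrU\sim\bar t^*\scrU$ recalled just before the corollary, and pull back. The paper's proof is a terse three lines that simply asserts the existence of the map of Hecke correspondences and the implication; your version spells out the commutativity of the relevant squares via Kisin's extension property and tracks the prime-to-$p$ condition, which are exactly the details the paper leaves implicit.
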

	
	\begin{proof}
		
		Denote by $g'\in GSP_{2n}(\A_f)$ the image of $g$ under the embedding of Shimura data as in the hypothesis of the corollary. Then the Hecke correspondence for $(G,X)$ maps to a Hecke correspondence for $(GSP_{2n},S^{\pm})$: \[\xymatrix{ & \scrA_{n,1,K'_{g'}}\ar[dl]_{s'} \ar[dr]^{t'} & \\ \scrA_{n,1,K'} & & \scrA_{n,1,K'}}\]
		
		Denote again by $\scrU\rightarrow \scrA_{n,1,K'}$ a universal family of abelian varieties over $\scrA_{n,1,K'}$. Then the fact that $s'^*\scrU$ is isogenous to $t'^*\scrU$ implies that $s^*\scrU_G$ is isogenous to $t^*\scrU_G$ as desired.
		
	\end{proof}

	\begin{notation}\label{shimura_curve}Let $(G,X)$ be a Shimura curve datum ,let $K\subset G(\A_f)$ be an open compact subgroup that is hyperspecial at an odd $p$ and let $\pP|p$ be a prime of $F$.
		
		\begin{itemize}
			
			\item Let $M$ be the Shimura curve $Sh_K(G,X)$ over $F$ (or $\C$).
			
			\item Let $\scrM$ be the canonical integral model $\scrS_K(G,X)$ over $\sO_{F,\mathfrak p}$
			
			\item Let $\sM:=\scrM\otimes\F_p$ and $\sM_0$ be a geometrically connected component of $\sM$.
			
		\end{itemize}
		
		Given a rational symplectic representation $G\rightarrow GSP_{2n}$ realizing $Sh(G,X)$ as of Hodge-type, pick $K'\subset GSP_{2n}(\A_f)$ such that we obtain $\scrM\hookrightarrow \scrA_{n,1,K'}$.
		
		\begin{itemize}
			
			\item Denote by $\scrU\rightarrow\scrM$ the induced abelian scheme over $\scrM$
			
			\item Denote by $\sU\rightarrow \sM$ the reduction modulo $p$ of $\scrU\rightarrow\scrM$.
			
			\item Denote by $\sU_0\rightarrow \sM_0$ the restriction of $\sU\rightarrow \sM$ to $\sM_0$.
			
		\end{itemize}

	\end{notation}

	\bibliographystyle{plain}

\end{document}